\documentclass{amsart}
\usepackage{xcolor}
\usepackage{quiver}
\usepackage{comment}
\usepackage{float}
\usepackage{hyperref}
\usepackage{cleveref}
\usepackage{algorithm2e}
\usepackage{graphicx} % Required for inserting images
\title{Products in spin${}^c$-cobordism}
\author{Hassan Abdallah}
\author{Andrew Salch}
\address{Department of Mathematics, Wayne State University, Detroit, MI, USA}
\email{hassan@wayne.edu, asalch@wayne.edu}
%\date{October 2023}
\newtheorem{theorem}{Theorem}[section]
\newcounter{lettered}
\newtheorem{mainthm}[lettered]{Theorem}
\newtheorem{maincor}[lettered]{Corollary}

\newtheorem{definition}[theorem]{Definition}
\newtheorem{proposition}[theorem]{Proposition}
\newtheorem{corollary}[theorem]{Corollary} 
\theoremstyle{definition}

\newtheorem{remark}[theorem]{Remark}
\DeclareMathOperator{\Sq}{{\rm Sq}}
\DeclareMathOperator{\Spec}{{\rm Spec}}

\DeclareMathOperator{\im}{{\rm im\:}}
\DeclareMathOperator{\Subsets}{{\mathcal{S}\mbox{\em{ub}}}}
\DeclareMathOperator{\Ext}{{\rm Ext}}

\begin{document}
\begin{abstract}
We calculate the mod $2$ spin$^c$-cobordism ring up to uniform $F$-isomorphism (i.e., inseparable isogeny). As a consequence we get the prime ideal spectrum of the mod $2$ spin$^c$-cobordism ring. We also calculate the mod $2$ spin$^c$-cobordism ring ``on the nose'' in degrees $\leq 33$. We construct an infinitely generated nonunital subring of the $2$-torsion in the spin$^c$-cobordism ring. We use our calculations of product structure in the spin and spin$^c$ cobordism rings to give an explicit example, up to cobordism, of a compact $24$-dimensional spin manifold which is not cobordant to a sum of squares, which was asked about in a 1965 question of Milnor.

 %As applications, we prove that the spin${}^c$-cobordism ring is not isomorphic to a polynomial algebra, even when reduced modulo $2$ and modulo the Bott element $\beta = [\mathbb{C}P^1]$, but it is {\em uniformly $F$-isomorphic}  to a polynomial algebra when reduced modulo $2$ and $\beta$. %We also prove that the squares of odd-dimensional Dold manifolds lift to indecomposable $2$-torsion classes in the spin cobordism ring. 
\end{abstract}

\maketitle
%\tableofcontents

\section{Introduction and summary of results}

\subsection{Spin$^c$ cobordism}
\label{spinc cobordism intro subsection}
A spin${}^c$-structure on a compact smooth $n$-dimensional manifold $M$ is a reduction of its structure group from $O(n)$ to $Spin^c(n)$. We find the following perspective illuminating: a compact smooth manifold is
\begin{itemize} 
\item orientable if its first Stiefel--Whitney class $w_1$ vanishes
\item and admits a spin structure if its first two Stiefel--Whitney classes, $w_1$ and $w_2$, both vanish.
\end{itemize}
A spin${}^c$-structure is intermediate between an orientation and a spin structure. Specifically, a compact smooth manifold $M$ admits a spin${}^c$ structure if its first Stiefel--Whitney class $w_1$ vanishes, and its second Stiefel--Whitney class $w_2$ is a reduction of an integral class. That is, $w_2\in H^2(M;\mathbb{F}_2)$ is in the image of the reduction-of-coefficients map $H^2(M;\mathbb{Z}) \rightarrow H^2(M;\mathbb{F}_2)$. For these and many other relevant facts, consult Stong's book \cite{MR0248858}.

The spin${}^c$-cobordism ring, written $\Omega^{Spin^c}_*$, is the ring of spin${}^c$-cobordism classes of compact smooth spin${}^c$-manifolds. The addition is given by disjoint union of manifolds, while the multiplication is Cartesian product. There are several reasons to care about spin${}^c$-cobordism: aside from its applications to mathematical physics, e.g. \cite{Blumenhagen_Cribiori_Kneißl_Makridou_2023} and \cite{ertem2020weyl}, spin${}^c$-cobordism is of particular interest because it is one of the {\em complex-oriented} cobordism theories, and consequently there exists a one-dimensional group law on $\Omega^{Spin^c}_*$ which describes how the first Chern class in spin${}^c$-cobordism behaves on a tensor product of complex line bundles. See \cite{MR1324104} or \cite{coctalos} for these classical ideas, whose consequences for complex cobordism (as in \cite{MR860042}) have been enormous, but whose consequences for spin${}^c$-cobordism have apparently never been fully explored.

Since spin${}^c$-cobordism is an example of a ``$(B,f)$-cobordism theory'' in the sense of Thom, the general results of \cite{MR0061823} ensure that there exists a spectrum $MSpin^c$ such that $\pi_*(MSpin^c)\cong \Omega^{Spin^c}_*$. The homotopy type of the spectrum $MSpin^c$ is understood as follows. 
\begin{description}
\item[Away from $2$] 
The map $\pi: BSpin^{c} \longrightarrow BSO \times K(\mathbb{Z},2)$ is an odd-primary homotopy equivalence and induces an isomorphism $\Omega^{Spin^c}_*[\frac{1}{2}] \cong \Omega^{SO}_*(K(\mathbb{Z},2))[\frac{1}{2}]$, and consequently $MSpin^c[\frac{1}{2}] \cong MSO[\frac{1}{2}] \wedge \mathbb{C}P^{\infty}$. %This yields a means of calculating the ring $\Omega^{Spin^c}_*[1/2]$: one uses complex-orientability of $MSpin^c$ to get an isomorphism of rings $MSO[\frac{1}{2}]^*(\mathbb{C}P^{\infty})\cong MSO[\frac{1}{2}]^*[[X]]$, then uses the formal group law on $MSO[\frac{1}{2}]^*$ (whose universal property is given by \cite{baker2014msp}) to understand the coproduct on $MSO[\frac{1}{2}]^*[[X]]$ whose dual, in an appropriate sense, is responsible for the ring structure on $MSO[\frac{1}{2}]_*(\mathbb{C}P^{\infty}) \cong \Omega^{Spin^c}_*[\frac{1}{2}]$. This gives a means of understanding the ring $\Omega^{Spin^c}_*[\frac{1}{2}]$, although we know of nowhere in the literature where this has been carried out in any further detail.
\item[At $2$]
%There is no known characterization of the formal group law on $MSO_*$ without inverting $2$, and the map $\pi$ is not a homotopy equivalence at $2$, so none of the ``Away from $2$'' approach will work at the prime $2$. One needs different methods entirely. 
In 1966, Anderson, Brown, and Peterson \cite{MR0190939},\cite{MR0219077} proved that $MSpin^c$ splits $2$-locally as a wedge of suspensions of the connective complex $K$-theory spectrum $ku$ and the mod $2$ Eilenberg-Mac Lane spectrum $H\mathbb{F}_2$:
\begin{align}
\label{abp splitting} MSpin^c_{(2)} &\simeq Z \vee \coprod_{J} \Sigma^{4\left| J\right|} ku_{(2)}
\end{align}
where the coproduct (i.e., wedge sum) is taken over all partitions (i.e., unordered finite tuples of positive integers) $J$, and $\left| J\right|$ denotes the sum of the entries of $J$. 

Not much is known about the summand $Z$ in \eqref{abp splitting}, other than that
\begin{itemize}
\item it is a coproduct of suspensions of copies of $H\mathbb{F}_2$,
\item and from a Poincar\'{e} series \cite{MR0190939}, it is known how to solve inductively for the number of copies of $\Sigma^n H\mathbb{F}_2$ in $Z$, for each $n$.
\end{itemize}
In that sense, $Z$ is understood {\em additively}. %The multiplicative structure of $Z$ is not understood, and consequently there is no available description of $\pi_*(MSpin^c)_{(2)}$ as a {\em ring}.\end{description}
\end{description}

This purely additive understanding of $Z$, and consequently of $2$-local $\Omega^{Spin^c}_*$, is not entirely satisfying.  To see the problem, consider the following table, which we reproduce from Bahri--Gilkey \cite{MR0883375}:
\begin{table}[H]
%\begin{center}
\begin{tabular}{||c|c||c|c||c|c||c|c||}
\hline
 $n$ & $\dim_{\mathbb{F}_2}\pi_nZ$ &
 $n$ & $\dim_{\mathbb{F}_2}\pi_nZ$ &
 $n$ & $\dim_{\mathbb{F}_2}\pi_nZ$ &
 $n$ & $\dim_{\mathbb{F}_2}\pi_nZ$  \\
 \hline
 0 & 0 & 
 8 & 0& 
 16 & 0& 
 24 & 2 \\
 1 & 0 & 
 9 & 0& 
 17 & 0& 
 25 & 0 \\
 2 & 0 & 
 10 & 1& 
 18 & 3& 
 26 & 9 \\
 3 & 0& 
 11 & 0& 
 19 & 0& 
 27 & 0 \\
 4 & 0& 
 12 & 0& 
 20 & 1& 
 28 & 4 \\
 5 & 0& 
 13 & 0& 
 21 & 0& 
 29 & 1 \\
 6 & 0& 
 14 & 1& 
 22 & 5& 
 30 & 14 \\
 7 & 0& 
 15 & 0& 
 23 & 0& 
 31 & 1 \\
\hline
% c = range(0,32)
% c[::8] + c[1::8] + c[2::8] + c[3::8] + c[4::8] + c[5::8] + c[6::8] + c[7::8] 
\end{tabular}
%\end{center}
\caption{}\label{bahri-gilkey table 1}
\end{table}
The $\mathbb{F}_2$-linear dimension of $\pi_nZ$, as recorded in \cref{bahri-gilkey table 1}, is equivalently the number of copies of $\Sigma^n H\mathbb{F}_2$ in $2$-local $MSpin^c$, and equivalently the $\mathbb{F}_2$-rank of the $2$-torsion subgroup of $\Omega^{Spin^c}_n$. Hence this table is telling us about the $2$-torsion in the spin${}^c$-cobordism ring. 
One has the sense that some deep pattern is present in the distribution of the $2$-torsion, but whatever it is, it cannot be seen clearly from these $\mathbb{F}_2$-ranks, nor from the Poincar\'{e} series used to inductively compute them.

However, since $\pi_*(Z)$ is precisely the $2$-torsion in $\Omega^{Spin^c}_*$, $\pi_*(Z)$ is not only a summand but also an {\em ideal} in $\Omega^{Spin^c}_*$.
One wants to understand $\pi_*(Z)$ {\em multiplicatively,} i.e., one wants to be able to describe the ring structure on $\Omega^{Spin^c}_*$, including its $2$-torsion elements. A reasonably clear description of $\Omega^{Spin^c}_*$ as a ring would yield a far more illuminating understanding of $\pi_*(Z)$ than the inductive formula for its $\mathbb{F}_2$-rank in each degree, which is presently all we have. %Hence one sees that Stong had good reason for promoting the problem, in (LET'S FIND A CITATION HERE. -A.S.), of calculating the ring structure of $\Omega^{Spin^c}_*$, and particularly the multiplication on the $2$-torsion elements of $\Omega^{Spin^c}_*$. 

Fifty years after the additive structure of $MSpin^c$ was described by Anderson--Brown--Peterson, the problem of calculating the ring structure of $\Omega^{Spin^c}_*$ remains open. The purpose of this paper is to make progress towards a solution to this problem, restricting to the $2$-local case, which is the most difficult.

\begin{remark}
In principle, the ring structure on $\Omega^{Spin^c}_*$ {\em away} from $2$ is understood, although only in a rather indirect way. Here is how it works: from the complex-orientability of $MSO$, one gets an isomorphism of rings $MSO[\frac{1}{2}]^*(\mathbb{C}P^{\infty})\cong MSO[\frac{1}{2}]^*[[X]]$. The ring $MSO[\frac{1}{2}]^*[[X]]$ is also the ``covariant bialgebra'' of the formal group law of $MSO[\frac{1}{2}]_*$, in the sense of \cite[Chapter 36]{MR2987372}. Hence one can use the formal group law on $MSO[\frac{1}{2}]^*$ (whose universal property is given by \cite{baker2014msp}) to understand the coproduct on $MSO[\frac{1}{2}]^*[[X]]$, whose dual, in an appropriate sense, is responsible for the ring structure on $MSO[\frac{1}{2}]_*(\mathbb{C}P^{\infty}) \cong \Omega^{Spin^c}_*[\frac{1}{2}]$. This gives a means of understanding the ring $\Omega^{Spin^c}_*[\frac{1}{2}]$, although we know of nowhere in the literature where this has been carried out in any further detail.
\end{remark}

\subsection{The mod $2$ spin${}^c$-cobordism ring in low degrees}
\label{spinc-cobordism ring in low degrees}
There are natural forgetful maps, $\Omega^{Spin}_* \rightarrow \Omega^O_*$ and $\Omega^{Spin^c}_* \rightarrow \Omega^O_*$, from the spin and spin${}^c$-cobordism rings to the unoriented cobordism ring. Since $\Omega^O_*$ is well-understood, it is natural to try to understand the images of these forgetful maps.
In the 1968 book \cite[pg. 351]{MR0248858}, Stong asks:
\begin{quotation}
\underline{Open question:} Can one determine these images nicely as subrings of $\Omega^O_*$?
\end{quotation}
%By ``these images,'' Stong refers to the images of the natural maps $\Omega^{Spin}_*\rightarrow \Omega^O_*$ and $\Omega^{Spin^c}_*\rightarrow \Omega^O_*$. 
Our approach to understanding the mod $2$ spin${}^c$-cobordism ring begins by answering Stong's open question in a range of degrees. 
We use the Anderson--Brown--Peterson splitting \cite{MR0190939}, product structure in the Adams spectral sequences, and Thom's determination of $\Omega^O_*$ using symmetric polynomials \cite{MR0061823} to develop a method for calculating the image of the map $\Omega^{Spin^c}_*\rightarrow \Omega^O_*$ through degree $d$, for any fixed choice of integer $d$. Our method gives a presentation for $\Omega^{Spin^{c}}/(2,\beta)$ through degree $d$, since the map $\Omega^{Spin^{c}}/(2,\beta) \longrightarrow \Omega^O_*$ is injective. We carry out computer calculation using our method to obtain our first main theorem:
\begin{mainthm}[Theorem \ref{mspinc_subring in main text}]\label{mspinc_subring} 
The subring of the mod $2$ spin${}^c$-cobordism ring $\Omega^{Spin^{c}}_{*}\otimes_{\mathbb{Z}}\mathbb{F}_2$ generated by all homogeneous elements of degree $\leq 33$ is isomorphic to
\begin{align*}
%\mathbb{Z}[\beta, Y_{(2,2)},Y_{(4,4)},Y_{(5,5)},Y_{(6,6)},Y_{(8,8)},Y_{(9,9)},Y_{(10,10)},Y_{(11,11)},Y_{(12,12)}, Y_{(13,13)}, Y_{(14,14)},Y_{(16,16)} \\ T_{24}, T_{29}, T_{31}, T_{32}, T_{33}]/I
\mathbb{F}_{2}[\beta, Z_4,Z_8,Z_{10},Z_{12},Z_{16},Z_{18},Z_{20},Z_{22},Z_{24},Z_{26}, Z_{28},Z_{32}, \\ T_{24}, T_{29}, T_{31}, T_{32}, T_{33}]/I
\end{align*}
where $I$ is the ideal generated by the relations:
\begin{itemize}
\item %$2Z_{i}=0$ and 
$\beta Z_{i}=0$ for each $i\equiv 2 \mod 4$,
\item and %$2T_{i}=0$, 
$\ \beta T_{i}=0$ and $T_i^2 = U_{2i}$ for $i\in \{24,29,31,32,33\}$, where each $U_i$ is a particular polynomial in the generators $Z_{j}$ with $j\leq i-20$. The polynomial $U_i$ is described explicitly preceding Theorem \ref{mspinc mod 2 and beta}.
\end{itemize}
The degrees of the generators are as follows: $\beta = [\mathbb{C}P^1]$ is in degree $2$, while $Z_i$ and $T_i$ are each in degree $i$.
\end{mainthm}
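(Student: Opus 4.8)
The plan is to leverage the Anderson--Brown--Peterson splitting $MSpin^c_{(2)} \simeq Z \vee \coprod_J \Sigma^{4|J|} ku_{(2)}$ to reduce the problem to a computation that a computer can actually carry out in a finite range. The key observation is that the map $\Omega^{Spin^c}_*/(2,\beta) \longrightarrow \mathfrak{N}_*$ is injective (where $\beta = [\mathbb{C}P^1]$ is the image of the Bott-type class from a $ku$ summand), so it suffices to pin down the image of this map through degree $33$; then $\Omega^{Spin^c}_*\otimes\mathbb{F}_2$ is recovered from $\Omega^{Spin^c}_*/(2,\beta)$ together with the action of $\beta$, which on the $ku$-summands is understood (multiplication by the Bott class, giving the polynomial generators $Z_i$ with $\beta Z_i \neq 0$ for $i \equiv 0 \bmod 4$) and which annihilates the $H\mathbb{F}_2$-summands (giving $\beta Z_i = 0$ for $i \equiv 2 \bmod 4$ and $\beta T_i = 0$). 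First I would set up the mod $2$ Adams spectral sequence (or directly use the $\mathcal{A}(1)$- and $\mathcal{A}$-module structure of the ABP summands) to read off additive generators in each degree $\leq 33$ from Table~\ref{bahri-gilkey table 1} and the known $ku$-homotopy, matching each generator to either a $Z$-type class (from a $ku$ summand, detecting $\beta$-multiplication) or a $T$-type class (from the Eilenberg--MacLane summand $Z$, necessarily $2$-torsion and $\beta$-torsion).

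Next I would compute products. The essential tool here is Thom's theorem that $\mathfrak{N}_* \cong \mathbb{F}_2[x_i : i \neq 2^k - 1]$ via symmetric polynomials / characteristic numbers, together with the compatibility of the map $\Omega^{Spin^c}_*\to\mathfrak{N}_*$ with products. Concretely: represent each generator $Z_i, T_i$ by its image in $\mathfrak{N}_*$ (a specific polynomial in the Thom generators, computed from Stiefel--Whitney and Pontryagin numbers of representing manifolds or, more efficiently, from the Hurewicz image in $H_*(MSpin^c;\mathbb{F}_2)$ against the known structure of $\mathcal{A}//\mathcal{A}(1)$ and $\mathcal{A}//\mathcal{A}(0)$ for the $ku$ and $H\mathbb{F}_2$ summands respectively), multiply in $\mathfrak{N}_*$, and then express the result back in terms of the chosen generators. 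The relations $T_i^2 = U_{2i}$ fall out this way: $T_i^2$ is $\beta$-torsion and $2$-torsion of degree $2i$, so it must lie in the span of products of the $Z_j$'s with total degree $2i$ that are themselves $\beta$-torsion, forcing $U_{2i}$ to be a polynomial in the $Z_j$ with $j \leq 2i - 20$; the precise polynomial is determined by the characteristic-number computation. All of this is a finite linear-algebra problem over $\mathbb{F}_2$ once one bounds the degree by $33$, and the generator-count in each degree (hence the correctness of the claimed generating set) is cross-checked against the ABP Poincaré series and Table~\ref{bahri-gilkey table 1}.

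The main obstacle, and the place where genuine care is required, is bookkeeping the product structure coming from the interaction of different $ku$-summands and the $Z$-summand under the ABP splitting: the splitting is a splitting of spectra, not of ring spectra, so a product of two classes living in different wedge summands need not stay in any single summand, and one must compute the "off-diagonal" components of the multiplication. This is exactly where product structure in the Adams spectral sequence (the $h_0$- and $v$-type towers, and hidden extensions) enters, and where a direct appeal to $\mathfrak{N}_*$ via characteristic numbers is indispensable as a check — the injectivity of $\Omega^{Spin^c}_*/(2,\beta)\hookrightarrow\mathfrak{N}_*$ is what makes the mod-$(2,\beta)$ ring structure unambiguous, and then only finitely many $\beta$-extension problems remain to assemble $\Omega^{Spin^c}_*\otimes\mathbb{F}_2$ itself. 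I would organize the computer calculation so that it independently verifies (i) the additive ranks against Table~\ref{bahri-gilkey table 1}, (ii) the injectivity into $\mathfrak{N}_*$ in the given range, and (iii) that the listed relations generate all relations by a Gröbner-basis / dimension count through degree $33$.
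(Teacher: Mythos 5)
Your proposal is correct and follows essentially the same strategy the paper uses: exploit the injectivity of $\Omega^{Spin^c}_*/(2,\beta) \hookrightarrow \mathfrak{N}_*$, compute its image through degree $33$ via Thom's symmetric-polynomial basis and characteristic numbers (equivalently, $A$-module primitives in $H^*(MSpin^c;\mathbb{F}_2)$), then recover $\Omega^{Spin^c}_*\otimes\mathbb{F}_2$ by separating $ku$-type from $H\mathbb{F}_2$-type generators using the Anderson--Brown--Peterson splitting to settle the $\beta$-action. One small soft spot worth tightening: you argue that $T_i^2$ lies in the span of $\beta$-torsion products of the $Z_j$ merely because $T_i^2$ is $\beta$-torsion of degree $2i$, but a priori $T_i^2$ could itself be a new indecomposable generator; the clean reason, used implicitly by the paper, is that $T_i\in\mathfrak{N}_*$ is a sum of monomials in Thom's $Y_n$, so $T_i^2$ (by the characteristic-$2$ Frobenius) is a sum of products of $Y_n^2$'s, and each $Y_n^2$ is exactly the element that lifts to $Z_{2n}$ in $\Omega^{Spin^c}_*$ --- this is what forces $T_i^2$ to be a polynomial $U_{2i}$ in the $Z_j$'s and explains the degree bound on $j$.
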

\begin{comment}
In the degrees addressed by Theorem \ref{mspinc_subring}, each of the generators of $\Omega^{Spin^c}_*\otimes_{\mathbb{Z}}\mathbb{F}_2$ of the form $Z_{2i}$, {\em with $i$ odd}, lifts to an indecomposable $2$-torsion element of $\Omega^{Spin^c}_*$
In Theorem \ref{infinite_2torsion} we show that this pattern extends into higher degrees. The relevant elements $Z_{2i}$ are represented by squares of Dold manifolds $D_i$, defined in \cite{MR0079269}.
 By an argument with Stiefel--Whitney numbers, it is easy to show that $D_i$ does not lift to the spin cobordism ring. However, by lifting the {\em squares} of the Dold manifolds to the spin cobordism ring, we are able to identify an infinite family of indecomposable 2-torsion generators in $\Omega^{Spin^{c}}_{*}$:
\begin{mainthm}[Theorem \ref{hassans conj}] \label{infinite_2torsion}
For each odd integer $i$ such that $i+1$ is not a power of $2$, the Dold manifold $D_i$ has the property that $D_i\times D_i$ lifts to an indecomposable $2$-torsion element of $\Omega^{Spin^c}_{2i}$. It furthermore lifts to an indecomposable $2$-torsion element of $\Omega^{Spin}_{2i}$.
\end{mainthm} 
As a corollary of Theorem \ref{infinite_2torsion}, the nonunital subring $Z_*$ of $\Omega^{Spin^c}_*$---i.e., the subring consisting of the $2$-torsion elements---cannot be generated by only finitely many elements. 
\end{comment}

With Theorem \ref{mspinc_subring} in hand, the patterns in \cref{bahri-gilkey table 1} become completely clear: in each degree in this range, one can see {\em why} the $\mathbb{F}_2$-linear dimension of the $2$-torsion subgroup of $\Omega^{Spin^c}_*$ takes the particular value it takes, as follows.
Since Anderson--Brown--Peterson proved that the $2$-torsion coincides with the $\beta$-torsion in $\Omega^{Spin^c}_*$, in degrees $n\leq 33$ the $2$-torsion in $\Omega^{Spin^c}_n$ is simply the $\mathbb{F}_2$-linear combinations of the monomials in the generators $Z_i,T_i$ such that at least one of the factors is $\beta$-torsion, i.e., at least one of the factors is either a generator $Z_i$ with $i\equiv 2\mod 4$, or a generator $T_i$. 
Here is the same table as \cref{bahri-gilkey table 1}, but augmented with an $\mathbb{F}_2$-linear basis in each degree, using the multiplicative structure from Theorem \ref{mspinc_subring}. We start in degree $10$ since there is no nontrivial $2$-torsion in $\Omega^{Spin^c}_*$ below degree $10$.
\begin{table}[H]
%\begin{center}
\begin{tabular}{||c|c|c||}
\hline
 $n$ & $\dim_{\mathbb{F}_2}\pi_nZ$ & $\mathbb{F}_2$-linear basis for $\pi_nZ$ \\
 \hline
 10 & 1& $Z_{10}$ \\
 11,12,13 & 0 & \\
 14 & 1& $Z_4Z_{10}$ \\
 15, 16, 17 & 0 & \\
 18 & 3& $Z_4^2Z_{10},  Z_8Z_{10}, Z_{18}$ \\
 19 & 0 & \\
 20 & 1 & $Z_{10}^2$ \\
 21 & 0 & \\
 22 & 5 & $Z_4^3Z_{10}, Z_4Z_8Z_{10}, Z_{12}Z_{10}, Z_4Z_{18}, Z_{22}$ \\
 23 & 0 & \\
 24 & 2 & $Z_4Z_{10}^2, T_{24}$ \\
 25 & 0 & \\
 26 & 9 & $Z_4^4Z_{10}, Z_4^2Z_8Z_{10}, Z_8^2Z_{10}, Z_4Z_{12}Z_{10}, Z_{16}Z_{10},$ \\ & & $Z_4^2Z_{18}, Z_8Z_{18}, Z_4Z_{22}, Z_{26}$ \\
 27 & 0 & \\
 28 & 4 & $Z_4^2Z_{10}^2, Z_8Z_{10}^2, Z_{10}Z_{18},Z_4T_{24}$ \\
 29 & 1 & $T_{29}$ \\
 30 & 14 & $Z_4^5Z_{10}, Z_4^3Z_8Z_{10},Z_4Z_8^2Z_{10}, Z_4^2Z_{12}Z_{10}, Z_8Z_{12}Z_{10},$\\ && $Z_4Z_{16}Z_{10}, Z_{20}Z_{10}, Z_4^3Z_{18},Z_4Z_8Z_{18}, Z_{12}Z_{18},$ \\ && $Z_4^2Z_{22}, Z_8Z_{22}, Z_4Z_{26}, Z_{10}^3$ \\
 31 & 1 & $T_{31}$ \\
 32 & 8 & $Z_4^3Z_{10}^2,Z_4Z_8Z_{10}^2,Z_{12}Z_{10}^2, Z_4Z_{10}Z_{18}, Z_{10}Z_{22},$\\ && $Z_4^2T_{24}, Z_8T_{24}, T_{32}$ \\
 33 & 2 & $Z_4T_{29}, T_{33}$ \\
\hline 

\end{tabular}
%\end{center}
\caption{}\label{bahri-gilkey table 2}
\end{table}
One can also read off the product structure on the $2$-torsion in $\Omega^{Spin^c}_*$ in degrees $\leq 33$ from this table, since it is given by multiplication of monomials along with the relations from Theorem \ref{mspinc_subring}.

It is evident from Theorem \ref{mspinc_subring} that, in degrees $\leq 33$, $\Omega^{Spin^c}_*$ has a subring generated by elements $Z_4, Z_8, Z_{12},Z_{16}, \dots$ and by elements $Z_{2i}$ with $i$ odd and not one less than a power of $2$, subject to the relations $2 Z_{2i} = 0 = \beta Z_{2i}$ for all odd $i$. We are able to show that this pattern extends into all degrees, and goes some way to describing the ideal $\pi_*(Z)$ of $2$-torsion elements of $\Omega^{Spin^c}_*$ in multiplicative terms:
\begin{mainthm}[Theorem \ref{large nonunital subring thm}]\label{large nonunital...in intro} 
Consider the spin${}^c$-cobordism ring as a graded algebra over the graded ring $S:= \mathbb{Z}_{(2)}[\beta, Z_{2j} : j+1 \mbox{ not\ a\ power\ of\ } 2]/(\beta Z_{2j},2Z_{2j} \mbox{\ for\ odd\ } j).$
Let $J$ be the ideal of $S$ generated by all the elements $Z_{2j}$ with $j$ odd.
Then $J$ embeds, as a non-unital graded $S$-algebra, into the $2$-torsion ideal $\pi_*(Z)$ of the spin${}^c$-cobordism ring.
\end{mainthm}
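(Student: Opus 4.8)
The plan is to build an explicit graded ring homomorphism $\phi\colon S\to\Omega^{Spin^c}_{*(2)}$, check that it sends $J$ into the $2$-torsion ideal $\pi_*(Z)$, and then prove that $\phi|_J$ is injective by composing with the Thom ring homomorphism $t\colon\Omega^{Spin^c}_{*(2)}\to\mathfrak N_*$. (Since $S$ is a $\mathbb Z_{(2)}$-algebra I work $2$-locally; this is harmless, as $\pi_*(Z)$ is already $2$-local.) To construct $\phi$ I would set $\phi(\beta)=[\mathbb{C}P^1]$; for the generators $Z_{2j}$ with $j$ even (these are $Z_4,Z_8,Z_{12},\dots$) set $\phi(Z_{4k})=[\mathbb{C}P^{2k}]$ with its standard spin${}^c$-structure; and for $j$ odd with $j+1$ not a power of $2$, set $\phi(Z_{2j})$ equal to a chosen $2$-torsion lift of $[D_j\times D_j]=[D_j]^2$, the cobordism class of the square of the $j$-dimensional Dold manifold, whose existence is the content of the theorem on squares of Dold manifolds (Theorem~\ref{infinite_2torsion}). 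To see $\phi$ is well defined I must verify the defining relations of $S$ are killed: $2Z_{2j}\mapsto 0$ because the chosen lift lies in the $\mathbb F_2$-vector space $\pi_*(Z)$, and $\beta Z_{2j}\mapsto 0$ because $\beta$ annihilates $\pi_*(Z)$ — the $2$-torsion ideal coincides with the $\beta$-torsion ideal and is in fact killed outright by $\beta$, since $Z$ is a wedge of copies of $H\mathbb F_2$ and the unit $MSpin^c\to H\mathbb F_2$ sends $\beta\in\pi_2$ to zero. As $\pi_*(Z)$ is an ideal and the odd-indexed generators land in it, $\phi(J)\subseteq\pi_*(Z)$, and $\phi|_J$ is a homomorphism of non-unital graded $S$-algebras.

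Next I would describe $J$ concretely. From the relations $2Z_{2j}=0=\beta Z_{2j}$ for odd $j$, any monomial of $S$ involving at least one odd-indexed $Z_{2j}$ is annihilated by $2$ and by $\beta$; hence additively $S$ is $\mathbb Z_{(2)}[\beta,Z_4,Z_8,Z_{12},\dots]$ together with the $\mathbb F_2$-span of the monomials $\prod_k Z_{4k}^{b_k}\prod_{j\text{ odd}}Z_{2j}^{c_j}$ with $\sum_j c_j\ge 1$, and this $\mathbb F_2$-subspace is exactly $J$ (equivalently, $J$ is the $2$-torsion ideal of $S$). So it suffices to prove that $\phi$ carries these monomials to $\mathbb F_2$-linearly independent elements of $\Omega^{Spin^c}_{*(2)}$.

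For this I would compose with $t$: a linear relation among the $\phi(\text{monomials})$ pushes forward, so linear independence in $\mathfrak N_*$ suffices. One has $t\phi(\beta)=[\mathbb{C}P^1]=[S^2]=0$, but no $\beta$ occurs in $J$; $t\phi(Z_{4k})=[\mathbb{C}P^{2k}]$, a polynomial generator of $\mathfrak N_{4k}$ since its normal Milnor number satisfies $s_{4k}[\mathbb{C}P^{2k}]=(2k+1)\bmod 2=1\ne 0$; and $t\phi(Z_{2j})=[D_j]^2$, the square of the polynomial generator $[D_j]\in\mathfrak N_j$ (Dold, valid precisely when $j$ is odd and $j+1$ is not a power of $2$). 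Choosing a polynomial generating set $\{x_i:i\ne 2^\ell-1\}$ for $\mathfrak N_*$ with $x_j=[D_j]$ for the relevant odd $j$ and $x_{4k}=[\mathbb{C}P^{2k}]$, the elements $t\phi(Z_{4k})=x_{4k}$ and $t\phi(Z_{2j})=x_j^2$ are manifestly algebraically independent in $\mathfrak N_*=\mathbb F_2[x_i]$ — distinct variables to the first and to the second power — so distinct monomials in the $Z$'s map to distinct, hence $\mathbb F_2$-independent, monomials in the $x_i$. Thus $t\circ\phi$ is already injective on $J$, whence so is $\phi|_J$, and $J\hookrightarrow\pi_*(Z)$ as a non-unital graded $S$-algebra. (In degrees $\le 33$ this recovers exactly the $\beta$-torsion monomials of Theorem~\ref{mspinc_subring}, which lie in $\pi_*(Z)$ but need not exhaust it, e.g.\ $T_{24}\notin\phi(J)$.)

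The one genuinely hard input, and the main obstacle, is the fact used to define $\phi$ on the odd-indexed generators: for each odd $j$ with $j+1$ not a power of $2$, the unoriented class $[D_j]^2$ must lift to a $2$-torsion class in $\Omega^{Spin^c}_{2j}$, i.e.\ it must lie in the image of $t$. This is where the special structure of $MSpin^c$ enters: one checks that $D_j\times D_j$ satisfies the Stiefel--Whitney-number conditions in the Anderson--Brown--Peterson/Stong description of the image of $\Omega^{Spin^c}_*\to\mathfrak N_*$, which it does because the characteristic numbers of $D_j\times D_j$ are built from those of $D_j$ by squaring, so the relevant (odd, Bockstein-type) obstructions vanish; and once a lift is known to exist, subtracting off its torsion-free part — which maps to $0$ under $t$, since by the Anderson--Brown--Peterson splitting the torsion-free summand of $\Omega^{Spin^c}_*$ lies in $\beta\cdot\Omega^{Spin^c}_*$ in every degree $\equiv 2\bmod 4$ — leaves a lift that is torsion, hence $2$-torsion. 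Granting this input and the structural facts about $\pi_*(Z)$ recalled above, the remainder of the proof is bookkeeping together with the elementary statement about algebraic independence in $\mathfrak N_*$.
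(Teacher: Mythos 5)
Your overall strategy matches the paper's: define $Z_{2j}$ for odd non-dyadic $j$ to be a $2$-torsion lift of $[D_j]^2$ (the content of Proposition~\ref{hassans conj}), use the fact that $2$-torsion in $\Omega^{Spin^c}_*$ is $\beta$-torsion to check the relations of $S$, and detect injectivity on $J$ by composing with the injection $\Omega^{Spin^c}_*/(2,\beta)\hookrightarrow\mathfrak N_*$ and appealing to algebraic independence.

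There is, however, a concrete error in the step where you verify algebraic independence. You set $\phi(Z_{4k})=[\mathbb{C}P^{2k}]$ and assert that $[\mathbb{C}P^{2k}]$ is a polynomial generator of $\mathfrak N_{4k}$ because ``$s_{4k}[\mathbb{C}P^{2k}]=(2k+1)\bmod 2=1$.'' This is false: the \emph{real} (Stiefel--Whitney) Milnor number of any stably almost complex manifold vanishes mod $2$. Indeed, if the formal Chern roots are $x_i\in H^2$ then the formal Stiefel--Whitney roots are each $x_i^{1/2}$ with multiplicity two, so $s_n$ of the Stiefel--Whitney roots is $2\sum (x_i^{1/2})^n\equiv 0$. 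You have conflated the complex Milnor number $s^{\mathbb C}_{2k}[\mathbb{C}P^{2k}]=2k+1$ with the real one. Concretely, $[\mathbb{C}P^2]=[\mathbb{R}P^2\times\mathbb{R}P^2]=Y_2^2$ is decomposable in $\mathfrak N_4$, so $\mathbb{C}P^{2k}$ is not a polynomial generator of $\mathfrak N_{4k}$, and the sentence ``distinct variables to the first and to the second power'' does not apply as written. (This is not a coincidence: the image of $\Omega^U_*\to\mathfrak N_*$ is exactly the subring $\mathfrak N_*^2$ of squares, so \emph{every} complex manifold is decomposable in $\mathfrak N_*$.)

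The fix is straightforward and is in fact what the paper does: take $\phi(Z_{4k})$ to be a lift of $Y_{2k}^2\in\mathfrak N_{4k}$ (these lifts exist because $Y_{2k}^2$ is a square, hence in the image of $\Omega^U_*\to\mathfrak N_*$, which factors through $\Omega^{Spin^c}_*$). Then \emph{all} the $t\phi$-images of the generators of $S$ land in $\mathfrak N_*^2=\mathbb F_2[Y_i^2:\ i\ \text{non-dyadic}]$; the elements $Y_{2k}^2$ and $[D_j]^2=Y_j^2+(\text{decomposables})^2$ form a polynomial generating set of $\mathfrak N_*^2$; and your algebraic-independence argument goes through verbatim in $\mathfrak N_*^2$. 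If you insist on $[\mathbb{C}P^{2k}]$, you would instead need to prove that $[\mathbb{C}P^{2k}]$ is a polynomial generator of $\mathfrak N_*^2$ (equivalently, that its Frobenius preimage in $\mathfrak N_{2k}$ is indecomposable), which is a nontrivial extra claim not justified by the Milnor-number computation you gave.

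One further small point: you rightly observe that the hard input is the lift of $[D_j]^2$ to a $2$-torsion class; this is exactly Proposition~\ref{hassans conj}, whose proof in the paper goes through Milnor's result on the image of $\Omega^U_*$ (for lifting to $\Omega^{Spin^c}_*$) and Anderson's theorem that squares of orientable manifolds are unorientedly cobordant to spin manifolds (for lifting further to $\Omega^{Spin}_*$ and for $2$-torsionness), rather than through a direct Stiefel--Whitney-number verification as you sketch. Either way is acceptable, but Anderson's theorem is the shortest route.
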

Theorem \ref{large nonunital...in intro} describes the multiplicative structure of some, but not all, of the $2$-torsion in $\Omega^{Spin^c}_*$. For example, in degrees $\leq 33$, it accounts for precisely those monomials in \cref{bahri-gilkey table 2} which are {\em not} divisible by the elements $T_i$. In particular, the lowest-degree $2$-torsion element of $\Omega^{Spin^c}_*$ which is not described by Theorem \ref{large nonunital subring thm} is $T_{24}\in \Omega^{Spin^c}_{24}$. 

We have emphasized that the structure of the spin${}^c$-cobordism ring has remained mysterious, despite the great control over its additive structure which comes from the Anderson--Brown--Peterson splitting. In these respects, spin${}^c$-bordism is not unique: the structure of the spin-bordism ring is also quite mysterious, despite Anderson--Brown--Peterson \cite{MR0190939},\cite{MR0219077} also constructing a splitting of the $2$-local spin bordism spectrum $MSpin$ into copies of $H\mathbb{F}_2$, $ko_{(2)}$, and a suitably-connected cover of $ko_{(2)}$. Our methods were developed for spin${}^c$-cobordism, but in this paper we also apply them to spin cobordism. In \Cref{mspin_image}, we obtain a calculation of the image of the map $\Omega^{Spin}_*\rightarrow \Omega^O_*$ through degree 31. This has a noteworthy geometric consequence, which we will now describe.

\subsection{Milnor's $24$-dimensional spin manifold.}
In the 1965 paper \cite{MR0180977}, Milnor asks this question:
\begin{quotation}
 Problem. Does there exist a spin manifold $\Sigma$ of dimension $24$ so that $s_6(p_1, \dots ,p_6)[\Sigma] \equiv 1 \mod 2$?
\end{quotation}
Here $s_6$ is a certain symmetric polynomial (the sixth Girard--Newton polynomial, which converts from the elementary symmetric polynomials to the power sum polynomials), and $p_1,\dots ,p_6$ are Pontryagin classes. 
The reason for Milnor's question is that, in \cite{MR0180977}, Milnor proves that, for a compact smooth manifold $M$ of dimension $\leq 23$, the following conditions are equivalent:
\begin{enumerate}
\item $M$ is unorientedly cobordant to a spin manifold.
\item The Stiefel--Whitney numbers of $M$ involving $w_1$ and $w_2$ are all zero.
\item $M$ is unorientedly cobordant to $N\times N$, with $N$ an orientable compact manifold.
\end{enumerate}
Milnor points out that, if there exists a compact spin manifold $\Sigma$ whose Pontryagin number $s_6(p_1, \dots ,p_6)[\Sigma]$ is odd, then these conditions would fail to be equivalent in dimension $24$. Anderson--Brown--Peterson \cite{MR0190939},\cite{MR0219077} established that, as a consequence of their splitting of $2$-local $MSpin$, there does indeed exist such a compact spin manifold $\Sigma$. However, it seems that no explicit description of that $24$-dimensional compact spin manifold has been given in the literature (or anywhere else, as far as we know).

In Theorem \ref{milnor mfld thm}, we give an explicit formula for the unoriented bordism class of such a compact spin manifold $\Sigma$, as a disjoint union of products of real projective spaces and squares of Dold manifolds. We refer the reader to the Theorem \ref{milnor mfld thm} for a statement of that formula, which is lengthy. The formula is obtained using our calculation of the image of the map $\Omega^{Spin}_{24} \rightarrow \Omega^O_{24}$ and the manifold representatives calculated in \Cref{thom_manifolds}.  

\subsection{Determination of the mod $2$ spin${}^c$-cobordism ring up to inseparable isogeny}
Thom's famous calculation \cite{MR0061823} established that the unoriented bordism ring $\Omega^O_*$ is isomorphic to a polynomial algebra over $\mathbb{F}_2$. 
A theorem of Stong \cite[Proposition 14]{MR192516} shows that the spin${}^c$ cobordism ring, reduced modulo torsion and then reduced modulo $2$, is also isomorphic to a polynomial $\mathbb{F}_2$-algebra. 

By contrast, the spin$^{c}$ cobordism ring cannot itself be isomorphic to a polynomial algebra, since by \cite{MR0190939}, it has $2$-torsion but is not an $\mathbb{F}_2$-algebra, hence it has nontrivial zero divisors. Similarly, since the mod $2$ spin${}^c$-cobordism ring has nontrivial $\beta$-torsion, it cannot be isomorphic to a polynomial $\mathbb{F}_2$-algebra. 

It follows as a trivial consequence of Theorem \ref{mspinc_subring} that the mod $(2,\beta)$ spin${}^c$-cobordism ring {\em still} cannot be a polynomial $\mathbb{F}_2$-algebra. One can, with a bit of calculation, deduce the same fact from the additive structure of $2$-local $MSpin^c$, by verifying that the Poincar\'{e} series of the mod $(2,\beta)$ spin${}^c$-cobordism ring is not the Poincar\'{e} series of any polynomial algebra. This avoids the use of our multiplicative methods. The advantage of our multiplicative methods is that we are able to prove that $\Omega^{Spin^c}_*/(2,\beta)$ is instead {\em uniformly $F$-isomorphic} to a polynomial algebra.

\begin{comment}
However, as a corollary of Theorem \ref{mspinc_subring}, we find that these hopes are dashed:
\begin{mainthm}\label{nonpolynomial maincor} 
The mod $(2,\beta)$ spin${}^c$ cobordism ring, $\Omega^{Spin^{c}}_{*}/(2,\beta)$, is not isomorphic to a polynomial algebra.
\end{mainthm}
The failure of $\Omega^{Spin^c}_*$ to be a polynomial algebra is a consequence of a genuinely nontrivial algebraic relation in the spin$^{c}$ cobordism ring (namely, those relations involving $U_i$ elements in Theorem \ref{mspinc_subring}, not a consequence of the presence of torsion or a zero divisor.
A similar argument can be used to show that $\Omega^{Spin}_{*}$, reduced modulo the generators $2,\eta,\alpha,\beta$ of the maximal homogeneous ideal of $ko_*$, is also not a polynomial algebra. %{\em (Whereas the relation involving $T_{24}^2$ shows this for $MSpin^{c}_{*}$, I suspect a similar relation on the $\textit{fourth}$ power of any of the $T_{i}$'s that lift to $MSpin_{*}$, or on Milnor's manifold in degree 24. A subset of the image $MSpin_{*} \rightarrow MO_{*}$ in degrees higher than what we already have would need to be calculated. For example, I suspect that while $Y_{10}^{2}$ is not in the image, $Y_{10}^{4}$ is. This could be shown by using Proposition 3.5 to calculate its Stiefel--Whitney numbers and observing that all Stiefel--Whitney numbers with a factor of $w_{1}$ or $w_{2}$ vanish.)}

\subsection{The mod $(2,\beta)$ spin${}^c$-cobordism ring is $F$-isomorphic to a polynomial algebra}
\end{comment}

As far as we know, the terms ``$F$-isomorphism'' (perhaps better known as ``inseparable isogeny'') and ``uniform $F$-isomorphism'' originated with Quillen \cite{MR0298694}:
\begin{definition}\label{def of f-iso}
Given a prime $p$, a homomorphism of $\mathbb{F}_p$-algebras $f: A\rightarrow B$ is said to be an {\em $F$-isomorphism} if
\begin{itemize}
\item for each $a\in \ker f$, some power $a^n$ is zero, and
\item for each element $b\in B$, some power $b^{p^n}$ of $b$ is in the image of $f$.
\end{itemize}
The $F$-isomorphism $f$ is said to be {\em uniform} if $n$ can be chosen independently of $a$ and $b$.
\end{definition}
The notion of $F$-isomorphism is applied only to algebras over a field of positive characteristic, so we had better reduce modulo $2$ in order to apply this idea to the spin${}^c$-cobordism ring. We get a positive result:
\begin{mainthm}[Theorem \ref{f-iso thm}]\label{fiso_thm} 
The mod $2$ spin${}^c$-cobordism ring $\Omega^{Spin^c}_*\otimes_{\mathbb{Z}}\mathbb{F}_2$ is uniformly $F$-isomorphic to the graded $\mathbb{F}_{2}$-algebra
\begin{equation}
\label{ring 1} \mathbb{F}_{2}\left[\beta,y_{4i}, Z_{4j-2} : i\geq 1,\ j\geq 1,\ j\mbox{\ not\ a\ power\ of\ }2\right]/(\beta Z_{4j-2}),
\end{equation}
with $\beta$ the Bott element in degree $2$, with $y_{4i}$ in degree $4i$, and with $Z_{4j-2}$ in degree $4j-2$.
\end{mainthm}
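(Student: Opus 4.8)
The plan is to build an explicit comparison map $\phi\colon R\to\Omega^{Spin^c}_*\otimes_{\mathbb{Z}}\mathbb{F}_2$, where $R$ denotes the ring \eqref{ring 1}, and to verify directly that $\phi$ is a uniform $F$-isomorphism, in fact one whose exponent $n$ in Definition \ref{def of f-iso} can be taken equal to $1$. Three inputs feed the construction. First, Stong's Proposition 14 of \cite{MR192516}, together with the rational calculation $\pi_*(MSpin^c)\otimes\mathbb{Q}\cong\mathbb{Q}[\beta,x_4,x_8,\dots]$, identifies $A:=\Omega^{Spin^c}_*/(\mathrm{torsion},2)$ with a polynomial algebra $\mathbb{F}_2[\beta,y_{4i}:i\geq 1]$; I fix lifts $\tilde y_{4i}\in\Omega^{Spin^c}_{4i}$ of the polynomial generators. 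Second, the Anderson--Brown--Peterson splitting \eqref{abp splitting} and the accompanying fact that the $2$-torsion, the torsion, and the $\beta$-torsion of $\Omega^{Spin^c}_*$ all coincide with $\pi_*(Z)$, so that $\pi_*(Z)$ is an ideal annihilated by $\beta$ and $\Omega^{Spin^c}_*\otimes\mathbb{F}_2$ is an extension $0\to\pi_*(Z)\to\Omega^{Spin^c}_*\otimes\mathbb{F}_2\to A\to 0$ of graded rings. Third, Theorem \ref{large nonunital...in intro}, which embeds the ideal $K:=(Z_{4j-2})$ of $R$ into $\pi_*(Z)$ as a non-unital graded algebra. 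I then set $\phi(\beta)=\beta$, send each $y_{4i}$ to $\tilde y_{4i}$ reduced mod $2$, and send each $Z_{4j-2}$ to its image under the embedding of Theorem \ref{large nonunital...in intro}. The only relations in $R$ are $\beta Z_{4j-2}=0$, and these hold in the target since the image of $Z_{4j-2}$ lies in $\pi_*(Z)=\ker(\beta)$; hence $\phi$ is a well-defined map of graded $\mathbb{F}_2$-algebras.

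The kernel clause of Definition \ref{def of f-iso} will be vacuous, because $\phi$ is injective. By construction the composite $R\to\Omega^{Spin^c}_*\otimes\mathbb{F}_2\to A$ annihilates $K$ and is the identity on $A=R/K=\mathbb{F}_2[\beta,y_{4i}]$, so $\phi(r)=0$ forces $r\in K$. On $K$ the map $\phi$ is the reduction mod $2$ of the embedding of Theorem \ref{large nonunital...in intro}; since $K\cong J$ is already $2$-torsion, this reduction stays injective, whence $r=0$.

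For the second clause I would prove the stronger statement that $x^2\in\phi(R)$ for \emph{every} $x\in\Omega^{Spin^c}_*\otimes\mathbb{F}_2$; combined with injectivity this makes $\phi$ a uniform $F$-isomorphism with $n=1$. In characteristic $2$ the Frobenius $x\mapsto x^2$ is additive, and $\phi(R)$ is a subring, hence closed under products and powers; therefore it suffices to produce a set of ring generators of $\Omega^{Spin^c}_*\otimes\mathbb{F}_2$ each of whose square lies in $\phi(R)$. Now $\beta$, the $\tilde y_{4i}$, and the images of the $Z_{4j-2}$ lie in $\phi(R)$ outright, and because $A$ is generated modulo torsion by $\beta$ and the $\tilde y_{4i}$, these together with any set $\{W_\alpha\}$ of ideal generators of the torsion ideal $\pi_*(Z)$ generate $\Omega^{Spin^c}_*\otimes\mathbb{F}_2$ as a ring: a connectivity argument shows $\pi_*(Z)$ is spanned, over the subring generated by $\beta$ and the $\tilde y_{4i}$, by finite products of the $W_\alpha$. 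So the entire theorem reduces to the claim that $\pi_*(Z)$ admits a set of ideal generators, each a torsion class $W$ with $W^2\in\phi(R)$.

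This last claim is a global form of Theorem \ref{mspinc_subring}: in degrees $\leq 33$ the torsion ideal is generated by the classes $Z_{4j-2}$, which lie in $\phi(R)$, together with the exotic classes $T_i$, and Theorem \ref{mspinc_subring} records that $T_i^2=U_{2i}$ with $U_{2i}$ a polynomial in the generators $Z_j$, hence in $\phi(R)$. To obtain the analogue in all degrees I would pass to the Thom homomorphism $q\colon\Omega^{Spin^c}_*\otimes\mathbb{F}_2\to\mathfrak{N}_*$, whose kernel is the ideal $(\beta)$. One first checks that $q$ is injective on $\pi_*(Z)$: if $\beta u$ were a nonzero torsion class then $u$ would itself be torsion (since $\beta$ is a non-zero-divisor on $A$), forcing $\beta u=0$; so no nonzero torsion class is $\beta$-divisible and $\pi_*(Z)\cap(\beta)=0$. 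Thus $\pi_*(Z)$ embeds into Thom's polynomial ring $\mathfrak{N}_*$, and deciding whether a torsion class lies in $\phi(R)$ becomes a concrete question about which symmetric polynomials lie in a known subalgebra. I would then rerun the Adams-spectral-sequence method behind Theorem \ref{mspinc_subring}---feeding the $ku$- and $H\mathbb{F}_2$-summands of \eqref{abp splitting}, with their internal product structure, into the Adams spectral sequence and comparing with Thom's symmetric-function generators of $\mathfrak{N}_*$---now aiming to prove, \emph{uniformly in the degree}, that the image of $\Omega^{Spin^c}_*\otimes\mathbb{F}_2$ in $\mathfrak{N}_*$ is generated by $q(\phi(R))$ together with classes whose squares already lie in $q(\phi(R))$. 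The genuine obstacle is precisely this uniformity: the relations $T_i^2=U_{2i}$ are a finite computation in the range of Theorem \ref{mspinc_subring}, but showing that \emph{no} exotic torsion generator, in \emph{any} degree, ever requires more than a single squaring to land in $\phi(R)$ means controlling the quadratic part of the product structure on $\pi_*(Z)$ beyond the additive data of \eqref{abp splitting}---in effect, understanding enough of the $\mathcal{A}$-module structure of $H^*(MSpin^c;\mathbb{F}_2)$ to replace the degree-by-degree bookkeeping of Theorem \ref{mspinc_subring} with a structural argument. Everything else is formal manipulation of the extension $0\to\pi_*(Z)\to\Omega^{Spin^c}_*\otimes\mathbb{F}_2\to A\to 0$ and of Definition \ref{def of f-iso}.
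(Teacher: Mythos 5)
Your overall strategy coincides with the paper's: both build the same subring, generated by $\beta$, torsion-free lifts of Stong's polynomial generators $y_{4i}$, and the $(2,\beta)$-torsion classes lifting the Dold squares $D_j\times D_j$ for $j$ odd non-dyadic, and both then reduce the uniform $F$-isomorphism claim to showing that this subring $B$ contains every square. The paper also carries out a second filtration step to identify $B$ with the presented ring \eqref{ring 1}, which in your proposal is absorbed into the injectivity argument for $\phi$; that part is fine.

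The genuine difference is in how the squares-containment step is handled, and this is where your proposal has a gap that the paper avoids. You reduce to proving that each exotic torsion generator $W$ (such as $T_{24}$) has $W^2\in\phi(R)$, and you pronounce this ``the genuine obstacle,'' proposing to re-run the degree-by-degree Adams spectral sequence bookkeeping uniformly --- which you correctly suspect is hard to make structural. But there is a clean structural shortcut you are missing, and it is what allows the paper to dispose of this in one sentence. The point is that you never need to control the $T_i$'s at all. For any $W\in\pi_*(Z)$, consider the image under the Thom map $\pi\colon\Omega^{Spin^c}_*\otimes\mathbb{F}_2\to\mathfrak{N}_*$ (kernel $(\beta)$). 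Then $\pi(W^2)=\pi(W)^2$ lies in $(\mathfrak{N}_*)^2$, which by Thom's theorem is the \emph{polynomial} subring $\mathbb{F}_2[Y_j^2]$. Every generator $Y_j^2$ already has a preferred lift in $B$: for $j$ even a torsion-free lift built from the $\tilde y_{4i}$, for $j$ odd the torsion Dold-square class. Substituting these lifts into the polynomial $\pi(W)^2\in\mathbb{F}_2[Y_j^2]$ produces an element $\tilde P\in B$ with $\pi(\tilde P)=\pi(W^2)$. Decompose $\tilde P=\tilde P_0+\tilde P_1$ into the torsion-free part (monomials in the even-$j$ lifts only) and the torsion part. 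Projecting the identity $W^2-\tilde P\in(\beta)$ to $(\Omega^{Spin^c}_*/T)\otimes\mathbb{F}_2\cong\mathbb{F}_2[\beta,y_{4i}]$ kills $W^2$ and $\tilde P_1$ (torsion) and shows $\tilde P_0$ is divisible by $\beta$ there; since $\tilde P_0$ has no $\beta$'s and $\beta$ is prime in a polynomial ring, $\tilde P_0=0$. Thus $\tilde P=\tilde P_1\in B\cap\pi_*(Z)$, so $W^2-\tilde P\in(\beta)\cap\pi_*(Z)$. That intersection is $0$ (as you yourself observe, since $\pi_*(Z)$ is $\beta$-torsion and $\beta$ is regular modulo torsion), hence $W^2=\tilde P\in B$. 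This argument is uniform in the degree and makes no reference to the internal structure of the $T_i$'s; the finite computation of $T_i^2=U_{2i}$ in Theorem \ref{mspinc_subring} is a consistency check, not an input. So the ``obstacle'' you name is not one; the missing idea is precisely that $(\mathfrak{N}_*)^2=\mathbb{F}_2[Y_j^2]$ together with the direct-sum structure of $\Omega^{Spin^c}_*\otimes\mathbb{F}_2$ over its torsion-free quotient forces every torsion square into $B$ automatically.
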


\begin{maincor}\label{F-iso to a poly alg maincor}
The mod $(2,\beta)$ spin${}^c$-cobordism ring is uniformly $F$-isomorphic to a graded polynomial $\mathbb{F}_2$-algebra on 
\begin{itemize}
\item a generator in degree $4i$ for all positive integers $i$,
\item and a generator in degree $4j-2$ for all positive integers $j$ such that $j$ is not a power of $2$.
\end{itemize}
\end{maincor}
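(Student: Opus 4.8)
The plan is to obtain Corollary \ref{F-iso to a poly alg maincor} from Theorem \ref{fiso_thm} by reducing modulo $\beta$, after checking that passing to a quotient by a principal ideal is compatible with uniform $F$-isomorphism. Write $A=\Omega^{Spin^c}_*\otimes_{\mathbb{Z}}\mathbb{F}_2$ and let $R$ denote the ring \eqref{ring 1}. Then $\Omega^{Spin^c}_*/(2,\beta)=A/(\beta)$, while $R/(\beta)=\mathbb{F}_2[y_{4i},Z_{4j-2}:i\geq 1,\ j\geq 1,\ j\text{ not a power of }2]$, since the only defining relations of $R$, namely the elements $\beta Z_{4j-2}$, become trivial modulo $\beta$; this is precisely the graded polynomial algebra in the statement of the corollary, with the stated grading. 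So it suffices to produce a uniform $F$-isomorphism between $A/(\beta)$ and $R/(\beta)$.

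For this I would first prove a lemma: if $f\colon C\to D$ is a uniform $F$-isomorphism of $\mathbb{F}_p$-algebras and $t\in C$, then $f$ carries $(t)$ into $(f(t))$ and the induced map $\bar f\colon C/(t)\to D/(f(t))$ is again a uniform $F$-isomorphism. The $F$-surjectivity of $\bar f$ is automatic: for $\bar d\in D/(f(t))$ lift to $d\in D$, pick $n$ (uniform) with $d^{p^n}=f(c)$, and reduce modulo $f(t)$. For the uniform nilpotence of $\ker\bar f$, take $\bar c\in\ker\bar f$, so $f(c)=f(t)e$ for some $e\in D$; pick $m$ with $e^{p^m}=f(c')$ for some $c'\in C$, and $N$ uniformly bounding nilpotence in $\ker f$. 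Then $f(c^{p^m}-t^{p^m}c')=f(t)^{p^m}e^{p^m}-f(t)^{p^m}f(c')=0$, hence $(c^{p^m}-t^{p^m}c')^N=0$ in $C$; expanding this $N$-th power in characteristic $p$, every term other than $c^{p^m N}$ is divisible by $t$, so $c^{p^m N}\in(t)$, i.e. $\bar c^{\,p^m N}=0$ in $C/(t)$. Since $m$ and $N$ do not depend on the chosen elements, $\bar f$ is uniform.

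Applying the lemma with $t=\beta$ and $f$ the uniform $F$-isomorphism of Theorem \ref{fiso_thm} then gives the corollary, provided $f$ matches the two copies of $\beta$. It does: both $A$ and $R$ are one-dimensional in degree $2$, spanned by the Bott element and the polynomial generator $\beta$ respectively, and neither is nilpotent (immediate for $R$ from its presentation, and for $A$ because $\beta$ maps to a power of the Bott class under the spin${}^c$ Dirac index $MSpin^c\to ku$); hence, whichever way the $F$-isomorphism runs, $\beta$ cannot lie in its kernel, and $f(\beta)$ generates $(\beta)$ in the target (units in a connected graded $\mathbb{F}_2$-algebra being trivial). So the lemma produces a uniform $F$-isomorphism relating $A/(\beta)=\Omega^{Spin^c}_*/(2,\beta)$ and $R/(\beta)$, which was identified above with the desired polynomial algebra. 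The only step with real content is the kernel half of the lemma: one must insert the $F$-surjectivity of $f$ (to rewrite $e^{p^m}$ as something in the image) \emph{before} invoking the uniform nilpotence of $\ker f$, and then use that $\beta$ occurs to a controlled power so that a single monomial survives modulo $(\beta)$. This is also exactly where uniformity is needed, since a non-uniform $F$-isomorphism need not descend to a quotient; I expect it to be the main thing to get right, though it is short.
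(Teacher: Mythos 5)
Your proof is correct and takes the same route the paper intends: the Corollary is obtained from Theorem \ref{fiso_thm} by reducing modulo the Bott element, and the paper simply omits the details. The lemma you isolate — that a uniform $F$-isomorphism $f\colon C\to D$ descends to a uniform $F$-isomorphism $C/(t)\to D/(f(t))$, with both the $F$-surjectivity and the uniform nilpotence of the kernel carrying over — is exactly the point that needs checking, and your argument for it is sound; in the case at hand it even simplifies, since the $F$-isomorphism in Theorem \ref{f-iso thm} is an inclusion $B\hookrightarrow A$ with $B$ containing all squares, so for $c\in B$ with $c=\beta e$ ($e\in A$) one has $c^2=\beta\cdot(\beta e^2)\in\beta B$ directly, giving uniform nilpotence exponent $2$ without invoking the nilpotence bound $N$ for $\ker f$ (which is zero here).
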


An $F$-isomorphism induces a homeomorphism on prime ideal spectra, so Theorem \ref{fiso_thm} yields a description of all prime ideals in the mod $2$ spin${}^c$ cobordism ring. That is, we have
\begin{maincor}[Corollary \ref{f-iso cor}]
The topological space $\Spec \Omega^{Spin^c}_*/(2)$ is homeomorphic to $\Spec$ of the $\mathbb{F}_2$-algebra \eqref{ring 1}. The topological space $\Spec \Omega^{Spin^c}_*/(2,\beta)$ is homeomorphic to $\Spec$ of the $\mathbb{F}_2$-algebra described in Corollary \ref{F-iso to a poly alg maincor}.
\end{maincor}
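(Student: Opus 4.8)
The plan is to deduce this Corollary directly from Theorem~\ref{fiso_thm} (the uniform $F$-isomorphism statement) together with the standard fact that an $F$-isomorphism of $\mathbb{F}_p$-algebras induces a homeomorphism on prime ideal spectra. First I would recall why that fact holds: if $f\colon A\to B$ is an $F$-isomorphism, then $\Spec f\colon \Spec B \to \Spec A$ is injective because the kernel of $f$ is nilpotent (so $\Spec A \to \Spec(A/\ker f)$ is a homeomorphism) and surjective because every $b\in B$ has a power $b^{p^n}\in \im f$ (so the extension $A/\ker f \hookrightarrow B$ is integral and in fact purely inseparable, forcing $\Spec B \to \Spec(A/\ker f)$ to be a bijection); continuity in both directions follows since $\Spec f$ is continuous and closed (integral extensions satisfy lying-over and going-up, hence the induced map is closed). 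This is exactly Quillen's observation in \cite{MR0298694}, and I would cite it rather than reprove it in detail.

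With that in hand, the proof is a two-line application. For the first assertion, Theorem~\ref{fiso_thm} provides a uniform $F$-isomorphism between $\Omega^{Spin^c}_*\otimes_{\mathbb{Z}}\mathbb{F}_2 = \Omega^{Spin^c}_*/(2)$ and the $\mathbb{F}_2$-algebra in display~\eqref{ring 1}, so the two have homeomorphic prime ideal spectra. For the second assertion, I would observe that reducing an $F$-isomorphism $f\colon A\to B$ modulo a compatible element carries $F$-isomorphisms to $F$-isomorphisms: here $\beta\in \Omega^{Spin^c}_*/(2)$ maps to $\beta$ in \eqref{ring 1}, so $f$ descends to a homomorphism $\bar f\colon \Omega^{Spin^c}_*/(2,\beta) \to \bigl(\mathbb{F}_2[\beta, y_{4i}, Z_{4j-2}]/(\beta Z_{4j-2})\bigr)/(\beta)$, and the target simplifies to the polynomial algebra described in Corollary~\ref{F-iso to a poly alg maincor} (killing $\beta$ makes the relation $\beta Z_{4j-2}$ vacuous and removes the generator $\beta$, leaving a polynomial algebra on the $y_{4i}$ and the $Z_{4j-2}$ with $j$ not a power of $2$). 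One checks readily that $\bar f$ is still an $F$-isomorphism --- any element of the kernel of $\bar f$ lifts to an element $a$ of $A$ with $f(a)\in (\beta)$, and since $f$ is a uniform $F$-isomorphism one controls a power of $a$ modulo $\beta$; surjectivity-up-to-Frobenius is inherited immediately --- and uniformity is preserved. Then apply the spectrum fact again.

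The main thing to be careful about, rather than a genuine obstacle, is the bookkeeping in the second part: one must confirm that $\bar f$ really is well-defined (i.e.\ that $f$ sends $(\beta)$ into $(\beta)$, which is clear since $f(\beta)=\beta$) and that passing to the quotient does not destroy the $F$-isomorphism property. The surjectivity-mod-Frobenius condition transfers with no work. For the kernel condition, the subtlety is that $\ker\bar f$ could a priori be larger than the image of $\ker f$; but if $\bar a \in \ker \bar f$ then any lift $a$ satisfies $f(a) = \beta c$ for some $c\in B$, and one wants to conclude $\bar a$ is nilpotent in $A/(\beta)$. This follows because in the target ring \eqref{ring 1} the element $\beta$ is not a zero-divisor outside the ideal $(Z_{4j-2})$, and a short argument --- or, more cleanly, simply invoking that $\Spec$ of an $F$-isomorphism source and target agree and that $\Spec(A/(\beta))$ is the closed subscheme cut out by $\beta$ on both sides, which are already identified --- closes the gap. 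I would present the spectrum-level argument as the primary one, since it sidesteps any delicate ring-theoretic manipulation: the homeomorphism $\Spec B \cong \Spec A$ from the first part restricts to a homeomorphism between the closed subsets $V(\beta)$ on each side, and $V(\beta) = \Spec(A/(\beta))$, $V(\beta) = \Spec(B/(\beta))$ by definition, giving the second homeomorphism at once.
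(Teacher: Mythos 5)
Your proposal is correct and follows essentially the same route as the paper, which simply cites Quillen's observation (\cite[Proposition B.8]{MR0298694}, or \cite[Lemma 29.46.9]{stacks-project}) that an $F$-isomorphism induces a homeomorphism on prime spectra and applies it to Theorem~\ref{fiso_thm}. Your ``spectrum-level'' argument for the second assertion---restricting the homeomorphism $\Spec B \cong \Spec A$ to the closed subsets $V(\beta)$ on each side---is exactly the clean way to go, and is what the paper leaves implicit; it is worth remembering that this works because the $F$-isomorphism of Theorem~\ref{fiso_thm} carries $\beta$ to $\beta$, so $(\Spec f)^{-1}(V(\beta))=V(\beta)$. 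One small remark on your ring-theoretic descent argument: the ``short argument'' you allude to for nilpotence of $\ker\bar f$ is in fact quite clean here, because in the paper's construction the $F$-isomorphism $f$ is an inclusion of a subring, hence injective; if $f(r)=\beta s$ then choosing $n$ with $s^{2^n}=f(r')$ gives $f(r^{2^n})=f(\beta^{2^n}r')$, whence $r^{2^n}\in(\beta)$ by injectivity. But the topological argument you ultimately favor sidesteps this entirely and is the argument to present.
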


The last two sentences in Stong's 1968 book \cite{MR0248858} before the appendices begin are:
\begin{quotation}
One may relate the pair $(Spin, Spin^c)$ through exact sequences in precisely the same way as $(SU,U)$ are related (or as $(SO,O)$ are related). Computationally this is not of much use since one has no way to nicely describe the torsion in $\Omega^{Spin^c}_*$.
\end{quotation}
We regard Theorems \ref{large nonunital subring thm} and \ref{f-iso thm} as progress toward nicely describing the torsion in $\Omega^{Spin^c}_*$ by means of ring structure.

\subsection{Conventions}
\begin{itemize}
\item
Given a ring $R$ and symbols $x_1, \dots ,x_n$, we write $R\{ x_1, \dots ,x_n\}$ for the free $R$-module with basis $x_1, \dots ,x_n$.
\item We write $\beta$ for the Bott element in $\pi_2(ku)$, and also for its corresponding element $\beta = [\mathbb{C}P^1] \in \Omega^{Spin^c}_2$ under the Anderson--Brown--Peterson splitting of $2$-local $MSpin^c$.
\end{itemize}

\subsection{Funding}
The first author was partially supported by the electronic Computational Homotopy Theory (eCHT) research community, funded by National Science Foundation Research Training Group in the Mathematical Sciences grant 2135884.

\subsection{Acknowledgements}
The first author would like to thank Bob Bruner for many helpful conversations related to this work, and the Simons Foundation for providing the license for a copy of Magma \cite{MR1484478} used in calculations. We are also grateful to an anonymous referee for helpful comments.

\section{Preliminaries}\label{prelims}
In this section we present an extended review of some well-known facts about spin and spin${}^c$ cobordism, including the relationships various cobordism spectra, their homotopy groups, homology and cohomology groups, including the Steenrod algebra action on cohomology and the Pontryagin product in homology. This background material is necessary in order to understand the proofs of the results in the rest of the paper. Readers confident in their knowledge of this background material can skip to \cref{results}, where we begin proving new results.

\subsection{Review of the cohomology of the spectra $MSpin^{c}$ and $MSpin$}
There is an exact sequence of Lie groups
\begin{align*}
1 \longrightarrow U(1) \longrightarrow Spin^{c}(n) \longrightarrow SO(n) \longrightarrow 1
\end{align*}
that gives rise to the fiber sequence
\begin{align}\label{fib seq 1}
BU(1) \longrightarrow BSpin^{c} \longrightarrow BSO.
\end{align}
Using this fibration, Harada and Kono \cite{harada_kono} computed the mod 2 cohomology of the space $BSpin^c$:
\begin{theorem}\cite{harada_kono}
\begin{align}\label{harada kono iso}
H^{*}(BSpin^c;\mathbb{F}_{2}) &\cong \mathbb{F}_{2}(w_2,w_3,w_4,w_5,...)/I
\end{align}
where $I$ is the ideal $\langle w_3,\Sq^{2}(w_3),\Sq^2(Sq^{4}(w_3)),\Sq^8(Sq^4(Sq^2(w_3)),...\rangle$. 
\end{theorem}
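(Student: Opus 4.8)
\emph{Proof idea.} The plan is to compute $H^*(BSpin^c;\mathbb{F}_2)$ by running the mod $2$ Serre spectral sequence of the fiber sequence \eqref{fib seq 1}. The starting point is the identification of $BSpin^c$ as the homotopy fiber of the map $W_3\colon BSO\to K(\mathbb{Z},3)$ classifying the integral third Stiefel--Whitney class: the central extension $1\to U(1)\to Spin^c(n)\to SO(n)\to 1$ makes \eqref{fib seq 1} a principal $K(\mathbb{Z},2)$-fibration, classified by the unique nontrivial element $W_3=\beta w_2\in H^3(BSO;\mathbb{Z})\cong\mathbb{Z}/2$. Feeding in the two cohomology inputs $H^*(BSO;\mathbb{F}_2)=\mathbb{F}_2[w_2,w_3,w_4,\dots]$ and $H^*(K(\mathbb{Z},2);\mathbb{F}_2)=\mathbb{F}_2[\iota_2]$ (with $\lvert\iota_2\rvert=2$, $\Sq^1\iota_2=0$, $\Sq^2\iota_2=\iota_2^2$), the $E_2$-page is $\mathbb{F}_2[w_2,w_3,\dots]\otimes\mathbb{F}_2[\iota_2]$, and the transgression is forced to be $d_3(\iota_2)=\Sq^1 w_2=w_3$, the mod $2$ reduction of $W_3$.

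Next I would propagate this single differential through the whole spectral sequence via Kudo's transgression theorem: since $\iota_2$ transgresses to $w_3$, the class $\iota_2^2=\Sq^2\iota_2$ transgresses with $d_5(\iota_2^2)=\Sq^2 w_3$, and inductively $\iota_2^{2^{k}}=\Sq^{2^{k-1}}\bigl(\iota_2^{2^{k-1}}\bigr)$ transgresses with $d_{2^{k}+1}(\iota_2^{2^{k}})=\theta_k$, where $\theta_k:=\Sq^{2^{k-1}}\Sq^{2^{k-2}}\cdots\Sq^2 w_3$ and $\theta_0:=w_3$; these have degrees $3,5,9,17,\dots$, and are precisely the proposed generators of $I$. (Because $H^*(K(\mathbb{Z},2);\mathbb{F}_2)$ is concentrated in even degrees and the Leibniz rule kills $d_r(\iota_2^{2^{k}})$ for all $r<2^{k}+1$, nothing happens to $\iota_2^{2^{k}}$ prematurely, while the non-dyadic powers $\iota_2^{j}$ are exactly the classes killed by these differentials.) The structural mechanism is the dyadic factorization $\mathbb{F}_2[\iota_2]\cong\bigotimes_{k\ge0}\mathbb{F}_2[\iota_2^{2^{k}}]/\bigl((\iota_2^{2^{k}})^2\bigr)$: the spectral sequence splits multiplicatively as a tensor product of Koszul-type spectral sequences, the $k$-th one having the sole effect of killing $\theta_k$. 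One then invokes the standard lemma that, provided $\theta_0,\theta_1,\theta_2,\dots$ is a regular sequence in $\mathbb{F}_2[w_2,w_3,\dots]$, the total spectral sequence collapses after these transgressions with $E_\infty\cong\mathbb{F}_2[w_2,w_3,\dots]/(\theta_0,\theta_1,\dots)$ concentrated on the base edge; regularity follows from the leading-term computation $\theta_k=w_{2^{k}+1}+(\text{decomposables})$. Since $E_\infty$ is then generated along the base edge by images of the $w_i$, there is no room for a multiplicative extension, so the edge map $H^*(BSO;\mathbb{F}_2)\to H^*(BSpin^c;\mathbb{F}_2)$ is surjective with kernel exactly $I=(\theta_0,\theta_1,\dots)$, yielding the stated isomorphism. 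A last bookkeeping step matches my admissible monomials $\theta_k=\Sq^{2^{k-1}}\cdots\Sq^2 w_3$ with the operations as written in the statement, using Adem relations and the Wu formula in $H^*(BSO;\mathbb{F}_2)$ modulo the lower $\theta_j$.

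I expect the main obstacle to be the middle step: turning the heuristic ``the Serre spectral sequence of the fiber of a map to Eilenberg--Mac Lane spaces is a Koszul complex'' into a rigorous argument --- i.e.\ confirming that the single transgression $d_3(\iota_2)=w_3$ together with Kudo's theorem accounts for \emph{every} differential, and that there are no hidden multiplicative extensions. The clean way to handle this is to isolate the regular-sequence lemma and verify regularity by the leading-term count; everything else is then formal. An alternative route, which avoids Kudo entirely, is to use the Eilenberg--Moore spectral sequence of the pullback square exhibiting $BSpin^c$ as $BSO\times_{K(\mathbb{Z}/2,2)}K(\mathbb{Z},2)$ and to identify the relevant $\mathrm{Tor}$ with the same quotient ring, but the Serre/Kudo approach is more transparent and is closer to how the fibration \eqref{fib seq 1} is set up here.
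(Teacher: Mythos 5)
The paper does not prove this statement; it cites Harada--Kono \cite{harada_kono} and adds only a one-sentence gloss attributing the relations to a differential in the Serre spectral sequence of \eqref{fib seq 1}. Your proposal reconstructs what is essentially the Harada--Kono argument: view $BSpin^c$ as the fiber of $W_3\colon BSO\to K(\mathbb{Z},3)$, run the mod $2$ Serre spectral sequence of $K(\mathbb{Z},2)\to BSpin^c\to BSO$, identify the transgression $\tau(\iota_2)=w_3$, and propagate it via Kudo's transgression theorem. This is the right method, and your structural points (the dyadic tensor decomposition of $\mathbb{F}_2[\iota_2]$, the regular-sequence lemma making the spectral sequence Koszul, and the edge-map argument ruling out multiplicative extensions) are exactly the steps one needs to make precise. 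You also correctly identify the first nontrivial differential as a $d_3$, not a $d_2$ as the paper's surrounding text loosely says: $E_2^{2,1}$ vanishes because $H^1(K(\mathbb{Z},2);\mathbb{F}_2)=0$, so the transgression lives on the $E_3$-page.

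There are, however, two indexing slips worth flagging. Since $\iota_2^{2^{k-1}}$ has degree $2\cdot 2^{k-1}=2^k$, the top-square identity gives $\iota_2^{2^k}=\Sq^{2^k}\bigl(\iota_2^{2^{k-1}}\bigr)$, not $\Sq^{2^{k-1}}$; hence Kudo's theorem produces $\theta_k=\Sq^{2^k}\Sq^{2^{k-1}}\cdots\Sq^2 w_3$ rather than $\Sq^{2^{k-1}}\cdots\Sq^2 w_3$. This matches the degree list $3,5,9,17,\dots$ that you yourself write (these are $2^{k+1}+1$), and agrees with the ideal in the statement once one recognizes that the paper's ``$\Sq^{2}(\Sq^{4}(w_3))$'' is a typo for $\Sq^{4}(\Sq^{2}(w_3))$, as the next listed generator $\Sq^8\Sq^4\Sq^2 w_3$ makes clear. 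Correspondingly the leading term for the regularity check is $\theta_k=w_{2^{k+1}+1}+(\text{decomposables})$, not $w_{2^k+1}$. Neither slip affects the validity of the argument, only its bookkeeping, so with those corrections your proof is sound and is the approach the paper itself alludes to.
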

The triviality of the ideal $I$ in the cohomology of $BSpin^c$ is a consequence of the first $d_2$ differential in the  Serre spectral sequence associated to the fiber sequence \eqref{fib seq 1}. It is not practical to write down a presentation for the $\mathbb{F}_2$-algebra $H^{*}(BSpin^c;\mathbb{F}_{2})$ which is more explicit than \eqref{harada kono iso}, since the difficulty of calculating iterated Steenrod squares applied to the Stiefel--Whitney class $w_3$ grows rapidly as the number of Steenrod squares grows. For example, $\Sq^8(\Sq^4(\Sq^2(w_3))$ has 38 monomials when expressed as a polynomial in the Stiefel--Whitney classes.

There is also an exact sequence 
\begin{align*}
     1 \longrightarrow \mathbb{Z}/2\mathbb{Z} \longrightarrow Spin(n) \longrightarrow SO(n) \longrightarrow 1 
\end{align*}
which gives rise to the fiber sequence 
\begin{align*}
    B\mathbb{Z}/2\mathbb{Z} \longrightarrow BSpin(n) \longrightarrow BSO(n).
\end{align*}
Using this, Quillen calculated: 
\begin{theorem}\cite{MR0290401}
   \begin{align*} H^{*}(BSpin;\mathbb{F}_{2}) \cong \mathbb{F}_{2}(w_2,w_3,w_4,w_5,...)/J
   \end{align*}
   where $J$ is the ideal $\langle w_2, w_3,\Sq^{2}(w_3),\Sq^2(Sq^{4}(w_3)),\Sq^8(Sq^4(Sq^2(w_3)),...\rangle$.
\end{theorem}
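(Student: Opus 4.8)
The plan, essentially following Quillen \cite{MR0290401}, is to read off $H^*(BSpin;\mathbb{F}_2)$ from the mod $2$ Serre spectral sequence of the fibration $B\mathbb{Z}/2\mathbb{Z}\to BSpin\to BSO$. The key structural observation is that this fibration is the pullback of the path-loop fibration over $K(\mathbb{Z}/2\mathbb{Z},2)$ along the map $w_2\colon BSO\to K(\mathbb{Z}/2\mathbb{Z},2)$ classifying the second Stiefel--Whitney class. Since $BSO$ is simply connected, the spectral sequence has untwisted coefficients, with $E_2^{*,*}\cong \mathbb{F}_2[w_2,w_3,w_4,\dots]\otimes\mathbb{F}_2[t]$, where $t$ is the degree-$1$ generator of $H^*(B\mathbb{Z}/2\mathbb{Z};\mathbb{F}_2)$, placed in bidegree $(0,1)$. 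Comparing with the path-loop fibration, in which the fiber fundamental class transgresses to the base fundamental class, naturality of the transgression forces $d_2(t)=w_2$.

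I would then propagate this differential with the Kudo transgression theorem. Since $\Sq^{2^j}$ carries a degree-$2^j$ class to its square, $t^{2^k}=\Sq^{2^{k-1}}\cdots\Sq^2\Sq^1(t)$, so Kudo's theorem makes each $t^{2^k}$ transgressive with transgression $\Sq^{2^{k-1}}\cdots\Sq^2\Sq^1(w_2)$; as $w_1=0$ on $BSO$ we have $\Sq^1 w_2=w_3$, so these transgressions are precisely the classes $w_3,\ \Sq^2 w_3,\ \Sq^4\Sq^2 w_3,\dots$ which, together with $w_2$, generate the ideal $J$. An induction on $k$ then shows that between the critical pages $r=2^k+1$ the spectral sequence takes the Koszul-type form $B_k\otimes\mathbb{F}_2[t^{2^k}]$, where $B_k$ is the quotient of $H^*(BSO;\mathbb{F}_2)$ by the ideal generated by the first $k$ transgressions, and $d_{2^k+1}$ is the derivation carrying $t^{2^k}$ to the image in $B_k$ of the $k$-th transgression; the net effect on the bottom row $E_r^{*,0}$ is to pass from $B_k$ to $B_{k+1}$. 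In the limit, $E_\infty$ is concentrated on the line $q=0$, where it equals $H^*(BSO;\mathbb{F}_2)/J$. Because $E_\infty$ lives in a single fiber degree, the Serre filtration on $H^*(BSpin;\mathbb{F}_2)$ is trivial, so the edge homomorphism $H^*(BSO;\mathbb{F}_2)\to H^*(BSpin;\mathbb{F}_2)$---that is, the map induced by $BSpin\to BSO$---is surjective with kernel $J$, giving the asserted presentation.

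The step I expect to be the real obstacle is the bookkeeping just sketched: one must check that the spectral sequence is as small as it can be, i.e.\ that the $k$-th transgression $\Sq^{2^{k-1}}\cdots\Sq^1 w_2$ is genuinely nonzero in the page $B_k$ on which it acts. By the Wu formula, this transgression equals $w_{2^k+1}$ modulo decomposables and modulo the classes already killed, hence it is a fresh polynomial generator of $H^*(BSO;\mathbb{F}_2)$; this is exactly what ensures that each $d_{2^k+1}$ really trims off one generator, that no unexpected classes survive to $E_\infty$, and that there are no multiplicative extensions to resolve. The opposite inclusion, $J\subseteq\ker\bigl(H^*(BSO;\mathbb{F}_2)\to H^*(BSpin;\mathbb{F}_2)\bigr)$, is easy and needs no spectral sequence, since that kernel is an ideal closed under Steenrod operations and contains $w_2$, hence contains all of $J$.
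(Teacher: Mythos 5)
Your argument is correct, and since the paper offers no proof of this theorem---it is cited directly to Quillen---there is nothing in the paper to compare against. The Serre spectral sequence of $B\mathbb{Z}/2 \to BSpin \to BSO$, together with the Borel/Kudo transgression machinery, is a complete and standard route to the presentation, and you handle all the essential points: the identification of the fibration as the pullback of the path--loop fibration over $K(\mathbb{Z}/2,2)$ along $w_2$ forces $d_2(t)=w_2$; Kudo's theorem forces the higher transgressions $\Sq^{2^{k-1}}\cdots\Sq^{2}\Sq^{1}w_2$; and the Wu formula guarantees that the $k$-th transgression is $w_{2^k+1}$ plus decomposables, hence the sequence $w_2,w_3,\Sq^2 w_3,\Sq^4\Sq^2 w_3,\dots$ is regular in $H^*(BSO;\mathbb{F}_2)$, so the Koszul bookkeeping goes through and $E_\infty$ collapses onto $q=0$.

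Two smaller remarks. First, your argument silently corrects a typo in the paper's statement: the fourth listed generator, written there as $\Sq^{2}(\Sq^{4}(w_3))$, should be $\Sq^{4}(\Sq^{2}(w_3))$. Indeed by the Adem relation $\Sq^2\Sq^4=\Sq^6+\Sq^5\Sq^1$, and both summands kill $w_3$ for degree reasons, so $\Sq^2\Sq^4 w_3 = 0$; the intended class is the Kudo transgression $\Sq^4\Sq^2 w_3$, exactly as your computation produces. Second, a caveat on attribution: in the cited paper Quillen actually obtains $H^*(BSpin;\mathbb{F}_2)$ via his equivariant stratification theorem applied to extra-special $2$-groups and the Clifford-algebra description of $Spin$, rather than by the transgression argument you give. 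Your spectral-sequence proof is an older, more elementary route to the same presentation; both are valid, and note that the stable case you treat avoids the extra polynomial generator $z$ that appears in $H^*(BSpin(n);\mathbb{F}_2)$ for finite $n$, where the transgression eventually fails to be a nonzero divisor.
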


By the Thom isomorphism, we have that $H^*(MSpin^c;\mathbb{F}_2) \cong H^*(BSpin^c;\mathbb{F}_2)\{ U\}$ and $H^*(MSpin;\mathbb{F}_2) \cong H^*(BSpin;\mathbb{F}_2)\{ U\}$ as graded $\mathbb{F}_2$-vector spaces. Since \\$H^*(BSpin^c;\mathbb{F}_2)$ and $H^*(BSpin;\mathbb{F}_2)$ are quotients of $H^*(BO;\mathbb{F}_2) \cong \mathbb{F}_2[w_1, w_2, w_3, \dots]$, the action of Steenrod squares on $H^*(BSpin^c;\mathbb{F}_2)$ and $H^*(BSpin;\mathbb{F}_2)$ is determined by the Wu formula $\Sq^i w_j = \sum_{k=0}^i \binom{j+k-i-1}{k} w_{i-k}w_{j+k}$ and the Cartan formula. This, together with the formula $\Sq^n U = w^n U$ for the action of Steenrod squares on the Thom class $U$, determines the action of the Steenrod squares on the cohomology $H^*(MSpin^c;\mathbb{F}_2)$ of the spin${}^c$-bordism spectrum $MSpin^c$.
   
\subsection{Review of $MO_*$ and symmetric polynomials in the Stiefel--Whitney classes.}
\label{mo and combinatorics review}
The following definitions are classical (see e.g. chapter 1 of \cite{MR3443860}):
\begin{definition} Let $n$ be a nonnegative integer.
\begin{itemize}
\item Suppose $\lambda = (a_1, \dots ,a_n)$ is an unordered $n$-tuple of nonnegative integers. The {\em monomial symmetric polynomial associated to $\lambda$} is the symmetric polynomial \[ m_{\lambda}(X_1, \dots ,X_n) \in \mathbb{Z}[X_1, \dots ,X_n]^{\Sigma_n}\] which has the fewest nonzero monomial terms among all those which have $X_{1}^{a_1} X_{2}^{a_2}\dots X_{n}^{a_n}$ as a monomial term.
\item Given a nonnegative integer $m\leq n$, the {\em $m$th elementary symmetric polynomial} is the symmetric polynomial \[ e_{m}(X_1, \dots ,X_n) \in \mathbb{Z}[X_1, \dots ,X_n]^{\Sigma_n}\] given by
\begin{align*}
 e_m(X_1, \dots ,X_n) &= \sum_{1\leq d_1 < d_2 < \dots < d_m\leq n} X_{d_1}X_{d_2} \dots X_{d_m}.
\end{align*}
\end{itemize}
\end{definition}

The monomial symmetric polynomials form a $\mathbb{Z}$-linear basis for the ring of symmetric polynomials. The set of all finite products of elementary symmetric polynomials also famously (by Newton) forms a $\mathbb{Z}$-linear basis for the ring of symmetric polynomials. Consequently, for each $\lambda$, there exists a unique polynomial $P_{\lambda}(X_1, \dots ,X_n)$ such that 
\begin{align*}
 P_{\lambda}\left(e_1(X_1, \dots ,X_n), e_2(X_1, \dots ,X_n), \dots ,e_n(X_1, \dots ,X_n)\right) &= m_{\lambda}(X_1, \dots ,X_n).
\end{align*}
For more details about the polynomials $P_{\lambda}(X_1, \dots ,X_n)$, see the material on the transition matrix $M(m,e)$ and its inverse $M(e,m)$ in section 1.6 of \cite{MR3443860}, particularly (6.7)(i).

See \cite{MR0248858}, particularly pages 71 and 96 and surrounding material, for a nice exposition of the following result, which dates back to Thom \cite{MR0061823}: let $\Lambda$ be the set of unordered finite-length tuples of {\em positive} integers, each of which is not equal to $2^a-1$ for any integer $a$. Such integers are called ``non-dyadic,'' and such partitions are called ``non-dyadic partitions.'' For each $\lambda\in\Lambda$, write $\left|\lambda\right|$ for the length of $\lambda$, and write $\left| \left| \lambda \right|\right|$ for the sum of the elements of $\lambda$. Consider the polynomial 
\begin{align*}
 P_{\lambda}(w_1, \dots ,w_{\left| \lambda\right|})&\in H^*(BO;\mathbb{F}_2) \\
  &\cong \mathbb{F}_2[w_1, w_2, \dots ]\end{align*}
in the Stiefel--Whitney classes $w_1, w_2, \dots$. Then (see page 96 of \cite{MR0248858}, or pages 301-302 of \cite{MR0120654}) the set \[\{ P_{\lambda}(w_1, \dots ,w_{\left| \lambda\right|})U): \lambda\in\Lambda\}\] is a homogeneous $A$-linear basis for the graded free $A$-module $H^*(MO;\mathbb{F}_2)$, where $U\in H^0(MO;\mathbb{F}_2)$ denotes the Thom class, and $A$ is the mod $2$ Steenrod algebra.
Consequently, for each nonnegative integer $n$, $\pi_n(MO)$ is the $\mathbb{F}_2$-linear dual of the $\mathbb{F}_2$-vector space with basis the set 
\begin{equation}\label{thom basis} \left\{ P_{\lambda}(w_1, \dots ,w_{\left| \lambda\right|})U: \lambda \in \Lambda, \left|\left| \lambda\right|\right| = n\right\}.\end{equation}
Given two tuples $(a_1, \dots ,a_m)$ and $(b_1, \dots ,b_n)$, we have their concatenation
\begin{align*}
 (a_1, \dots ,a_m)\coprod (b_1, \dots ,b_n) 
  &:= (a_1 , \dots ,a_m,b_1, \dots ,b_n).
\end{align*}
The coproduct on $H^*(MO;\mathbb{F}_2)$ is then given by
\begin{align*}
\Delta(P_{\lambda}(w_1, \dots ,w_{\left| \lambda\right|})U)
 &= \sum_{\lambda^{\prime},\lambda^{\prime\prime}\in \Lambda :\ \lambda = \lambda^{\prime}\coprod\lambda^{\prime\prime}} P_{\lambda^{\prime}}(w_1, \dots ,w_{\left| \lambda^{\prime}\right|})U \otimes  P_{\lambda^{\prime\prime}}(w_1, \dots ,w_{\left| \lambda^{\prime\prime}\right|})U.\end{align*}
Consequently, if we write $\{ Y_{\lambda}\}$ for the basis of $\pi_*(MO)$ dual to the basis \eqref{thom basis}, then $Y_{\lambda}Y_{\lambda^{\prime}} = Y_{\lambda\coprod\lambda^{\prime}}$, and $\Omega^O_*\cong \pi_*(MO)\cong \mathbb{F}_2[ Y_{(2)},Y_{(4)},Y_{(5)},Y_{(6)},Y_{(8)}, \dots]$, with $Y_{(i)}$ in degree $i$. We will sometimes write $Y_i$ as an abbreviation for Thom's generator $Y_{(i)}$ of $\Omega^O_*$.

\subsection{Maps between bordism theories}

The first stages of the  Whitehead tower for the orthogonal group are: 
\begin{align*} 
%BString = BO\langle 8 \rangle \longrightarrow 
BSpin = BO\langle 4 \rangle \longrightarrow BSO = BO\langle 2 \rangle \longrightarrow BO.
\end{align*}
While $BSpin^{c}$ does not fit into this sequence via a connective cover, the map $BSpin \longrightarrow BSO$ factors through $BSpin^c$. %: $Spin^c(n)$ is a bundle over $Spin(n)$ with fiber $U(1)$, and a bundle over $U(1)$ with fiber $Spin(n)$. {\em (I am not sure I understand what the previous sentence is supposed to mean. Can you confirm that you meant what you wrote? -A.S., Fixed! -H.A.)} %Since $Spin(n)$ is the universal double cover of $SO(n)$, The classical fibration of $K(\mathbb{Z},0) \longrightarrow K(\mathbb{Z},0) \longrightarrow K(\mathbb{Z}/2\mathbb{Z},0)$, 
There is a commutative diagram whose rows and columns are fiber sequences: 
\begin{equation*}
\begin{tikzcd}[row sep=huge]
K(\mathbb{Z}/2\mathbb{Z},0) \arrow[r] \arrow[d] &
Spin(n) \arrow[r] \arrow[d,swap] &
SO(n) \arrow[d,]
\\
K(\mathbb{Z},1) \arrow[r] \arrow[d] &  Spin^{c}(n) \arrow[r] \arrow[d] & SO(n) \arrow[d] \\
K(\mathbb{Z},1)\cong U(1) \arrow[r] & U(1) \arrow[r] & *
\end{tikzcd}
\end{equation*}
On the level of spectra, we have maps
\begin{align}
%MString \longrightarrow 
MSpin \longrightarrow MSpin^{c} \longrightarrow MSO \longrightarrow MO.
\end{align}
The maps induced in homotopy give the maps of respective cobordism rings.  We will specifically consider the images of $MSpin^{c}_{*}$ and $MSpin_*$ in $MO_{*}$.

By the Anderson--Brown--Peterson splitting of $2$-local $MSpin^c$, the cohomology $H^*(MSpin^c;\mathbb{F}_2)$ splits as a direct sum of suspensions of $H^*(ku;\mathbb{F}_2) \cong A//E(1)$ and of $H^*(H\mathbb{F}_2;\mathbb{F}_2) \cong A$. Here we are using the standard notation $A$ for the mod $2$ Steenrod algebra, and $A//E(1)$ for its quotient $A\otimes_{E(1)}\mathbb{F}_2$, where $E(1)$ is the subalgebra of $A$ generated by $\Sq^1$ and by $Q_1 = [\Sq^1,\Sq^2]$. Hence the $s=0$-line in the $2$-primary Adams spectral sequence for $MSpin^c$,
\begin{align}
\label{adams ss 1} E_2^{s,t} \cong \Ext_A^{s,t}\left(H^*(MSpin^c;\mathbb{F}_2),\mathbb{F}_2\right) & \Rightarrow \pi_{t-s}(MSpin^c)^{\wedge}_2 \\
\nonumber d_r : E_r^{s,t} & \rightarrow E_r^{s+r,t+r-1} 
\end{align}
is a direct sum of suspensions of $\mathbb{F}_2$, with one summand $\Sigma^t\mathbb{F}_2$ for each summand $\Sigma^t ku_{(2)}$ in $MSpin^c_{(2)}$, and also with one summand $\Sigma^t\mathbb{F}_2$ for each summand $\Sigma^t H\mathbb{F}_2$ in $MSpin^c_{(2)}$. 

Anderson--Brown--Peterson prove in \cite{MR0219077} that all differentials in the Adams spectral sequence \eqref{adams ss 1} are zero. Consequently the $s=0$-line $\hom_A(H^*(MSpin^c;\mathbb{F}_2),\mathbb{F}_2)$ is the reduction of $\pi_*(MSpin^c)$ modulo the ideal generated by $2$ and by the Bott element $\beta\in \pi_2(ku)$. 

This means we can calculate $\pi_*(MSpin^c)/(2,\beta)$ simply by calculating\linebreak $\hom_A(H^*(MSpin^c;\mathbb{F}_2),\mathbb{F}_2)$, i.e., the $A_*$-comodule primitives $\mathbb{F}_2\Box_{A_*} H_*(MSpin^c;\mathbb{F}_2)$. The advantage of thinking in terms of comodule primitives is that the Adams spectral sequence respects ring structure: if we calculate the homology $H_*(MSpin^c;\mathbb{F}_2)$ as a ring, then by simply restricting to the comodule primitives in $H_*(MSpin^c;\mathbb{F}_2)$, we have calculated $\Omega^{Spin^c}_*/(2,\beta)$.  

The same remarks apply {\em mutatis mutandis} for the spin bordism spectrum $MSpin$, the oriented bordism spectrum $MSO$ or for the unoriented bordism spectrum $MO$ in place of $MSpin^c$. The Anderson--Brown--Peterson splitting for $MSpin$ is as a wedge of suspensions of $ko$, $ko\langle 2 \rangle$, and $H\mathbb{F}_{2}$. The analogue of the Anderson--Brown--Peterson splitting for $MO$ is Thom's splitting of $MO$ as a wedge of suspensions of $H\mathbb{F}_2$, while $MSO_{(2)}$ splits as a wedge of suspensions of $H\mathbb{Z}_{(2)}$ and $H\mathbb{F}_2$; as far as we know, the latter splitting was originally proven by Wall \cite{MR0120654}. Since $H^*(MSpin^c;\mathbb{F}_2)$ is a quotient $A$-module of $H^*(MSO;\mathbb{F}_2)$, which is in turn a quotient $A$-module of $H^*(MO;\mathbb{F}_2)$, dualizing yields that $H_*(MSpin^c;\mathbb{F}_2)$ is a subcomodule of $H_*(MSO;\mathbb{F}_2)$, which is in turn a subcomodule of $H_*(MO;\mathbb{F}_2)$.  

Hence our broad strategy for calculating $\Omega^{Spin^c}_*/(2,\beta) \cong MSpin^c_*/(2,\beta)$ and $\Omega^{Spin}_{*}/(2,\eta,\alpha,\beta) \cong MSpin_*/(2,\eta,\alpha,\beta)$, and the natural maps $MSpin_*/(2,\eta,\alpha,\beta)\rightarrow MSpin^c_*/(2,\beta)\rightarrow MO_*$, is to calculate the $A_*$-comodule primitives in $H_*(MSpin^c;\mathbb{F}_2)$ and in $H_*(MSpin;\mathbb{F}_2)$, regarding each as $A_*$-subcomodule algebras of $H_*(MO;\mathbb{F}_2)$. The resulting information will describe $\Omega^{Spin^c}_*/(2,\beta)$ as a subring of $\Omega^O_*$. %, partially addressing an open question of Stong (page 351 \cite{MR0248858}). 
Details of this strategy are given in the description of the computational method in the proof of Proposition \ref{mspinc_image}.

The relationships between the spin,  spin${}^c$, oriented, and unoriented cobordism rings and their homologies is summarized in the following diagram, in which hooked arrows represent one-to-one maps:
\[\begin{tikzcd} 
    {\text{MSpin}}_{*}/(2,\eta,\alpha,B) \pgfmatrixnextcell {H_{*}(\text{MSpin};\mathbb{F}_{2})} \\
    \\
	{\text{MSpin}^{c}_{*}/(2,\beta) } \pgfmatrixnextcell {H_{*}(\text{MSpin}^{c};\mathbb{F}_{2})} \\
	\\
	{\text{MSO}_{*}/(2) }  \pgfmatrixnextcell{H_{*}(\text{MSO};\mathbb{F}_{2})} \\
	\\
	{\text{MO}_{*} }  \pgfmatrixnextcell {H_{*}(\text{MO};\mathbb{F}_{2})}
	\arrow[shorten <=3pt, shorten >=3pt, hook', from=1-1, to=3-1]
	\arrow[shorten <=3pt, shorten >=3pt, hook', from=3-1, to=5-1]
	\arrow[shorten <=4pt, shorten >=4pt, from=1-1, to=1-2]
	\arrow[shorten <=5pt, shorten >=5pt, from=3-1, to=3-2]
	\arrow[shorten <=13pt, shorten >=7pt, from=5-1, to=5-2]
	\arrow[shorten <=3pt, shorten >=3pt, hook', from=1-2, to=3-2]
	\arrow[shorten <=3pt, shorten >=3pt, hook', from=3-2, to=5-2]	\arrow[shorten <=3pt, shorten >=3pt, hook', from=5-2, to=7-2]
    \arrow[shorten <=3pt, shorten >=3pt, hook', from=5-1, to=7-1]
     \arrow[shorten <=15pt, shorten >=10pt, hook', from=7-1, to=7-2]
\end{tikzcd}\]

%\section{An algorithm for computing the ring $\Omega^{Spin^c}_*$ in an arbitrary range of degrees}\label{algorithm} 

\section{The $Spin^{c}$ bordism ring in low degrees} 
\label{results}
In the statement of Proposition \ref{mspinc_image}, 
we use Thom's presentation $\mathbb{F}_{2}[Y_{2},Y_{4},Y_{5},...]$ for the unoriented cobordism ring $\Omega^O_*$.
\begin{proposition}\label{mspinc_image} The image of the map $\Omega^{Spin^c}_{*} \longrightarrow \Omega^O_*$ agrees, in degrees $\leq 33$, with the subring of $\Omega^O_*$ generated by the elements
\[
    Y_{2}^{2},Y_{4}^{2},Y_{5}^{2},Y_{6}^{2}, Y_{9}^{2}, Y_{10}^{2}, Y_{11}^{2}, Y_{12}^{2}, Y_{13}^{2}, Y_{14}^{2}, Y_{15}^{2}, Y_{16}^{2},T_{24},T_{29},T_{31},T_{32},\mbox{\ and\ }T_{33} ,
\]
where 
\begin{align}
\label{t24 eq} T_{24} &= Y_{14} Y_5^2 +
 Y_{13} Y_{11} +
 Y_{13} Y_9 Y_2 +
 Y_{13} Y_6 Y_5 +
 Y_{13} Y_5 Y_2^3 +
 Y_{12} Y_5^2 Y_2 +
 Y_{11}^2 Y_2 \\ \nonumber & \ \ \ +
 Y_{11} Y_9 Y_4 +
 Y_{11} Y_8 Y_5 +
 Y_{11} Y_6 Y_5 Y_2 +
 Y_{11} Y_5 Y_4^2 +
 Y_{11} Y_5 Y_4 Y_2^2 +
 Y_{10} Y_5^2 Y_4 \\ \nonumber & \ \ \ +
 Y_{10} Y_5^2 Y_2^2 +
 Y_9^2 Y_4 Y_2 +
 Y_9^2 Y_2^3 +
 Y_9 Y_8 Y_5 Y_2 +
 Y_9 Y_6 Y_5 Y_4 +
 Y_9 Y_6 Y_5 Y_2^2 \\ \nonumber & \ \ \ +
 Y_9 Y_5^2 +
 Y_9 Y_5 Y_4^2 Y_2 +
 Y_6^2 Y_5^2 Y_2 +
 Y_5^4 Y_4 +
 Y_5^2 Y_4^3 Y_2 +
 Y_5^2 Y_4^2 Y_2^3, \end{align}
\begin{align}
\nonumber T_{29}&= Y_{19} Y_5^2 +
 Y_{17} Y_5^2 Y_2 +
 Y_{14} Y_5^3 +
 Y_{13} Y_6 Y_5^2 +
 Y_{13} Y_5^2 Y_2^3 +
 Y_{11} Y_9^2 +
 Y_{11} Y_8 Y_5^2  \\ \nonumber & \ \ \ +
 Y_{11} Y_5^2 Y_4 Y_2^2 +
 Y_{10} Y_9 Y_5^2 +
 Y_{10} Y_5^3 Y_2^2 +
 Y_9^3 Y_2 +
 Y_9^2Y_6 Y_5 +
 Y_9^2 Y_5 Y_2^3  \\ \nonumber & \ \ \ +
 Y_9 Y_6 Y_5^2 Y_2^2 +
 Y_9 Y_5^4 +
 Y_6^2 Y_5^3 Y_2 +
 Y_5^5 Y_4 +
 Y_5^3 Y_4^2 Y_2^3, \end{align}
\begin{align}
\nonumber T_{31} &=  Y_{21} Y_5^2 +
 Y_{19} Y_5^2 Y_2 +
 Y_{17} Y_5^2 Y_4 +
 Y_{17} Y_5^2 Y_2^2 +
 Y_{16} Y_5^3 +
 Y_{13} Y_9^2 +
 Y_{13} Y_8 Y_5^2   \\ \nonumber & \ \ \ +
 Y_{13} Y_5^2 Y_4 Y_2^2 +
 Y_{12} Y_9 Y_5^2 +
 Y_{12} Y_5^3 Y_2^2 +
 Y_{11} Y_{10} Y_5^2 +
 Y_{11} Y_9^2 Y_2 +
 Y_{11} Y_6 Y_5^2 Y_2^2  \\ \nonumber & \ \ \ +
 Y_9^3 Y_4 +
 Y_9^3 Y_2^2 +
 Y_9^2 Y_8 Y_5 +
 Y_9^2 Y_5 Y_4^2 +
 Y_9^2 Y_5 Y_4 Y_2^2 +
 Y_9^2 Y_5 Y_2^4   \\ \nonumber & \ \ \ +
 Y_9 Y_8 Y_5^2 Y_2^2 +
 Y_9 Y_6^2 Y_5^2 +
 Y_9 Y_5^2 Y_4^2 Y_2^2+
 Y_8^2 Y_5^3 +
 Y_6^2 Y_5^3 Y_4 +
 Y_5^3 Y_4^3 Y_2^2 +
 Y_5^3 Y_4^2 Y_2^4,\end{align}\begin{align}
\nonumber T_{32}&= Y_{22} Y_5^2+
 Y_{21} Y_{11}+
 Y_{21} Y_9 Y_2+
 Y_{21} Y_6 Y_5+
 Y_{21} Y_5 Y_2^3+
 Y_{20} Y_5^2 Y_2+
Y_{19} Y_{13} \\ \nonumber & \ \ \ +
Y_{19} Y_9 Y_4+
Y_{19} Y_8 Y_5+
Y_{19} Y_6 Y_5 Y_2+
Y_{19} Y_5 Y_4^2+
Y_{19} Y_5 Y_4 Y_2^2+
Y_{18} Y_5^2 Y_4 \\ \nonumber & \ \ \ +
Y_{18} Y_5^2 Y_2^2+
Y_{17} Y_{13} Y_2+
Y_{17} Y_{11} Y_4+
Y_{17} Y_8 Y_5 Y_2+
Y_{17} Y_6 Y_5 Y_4+
Y_{17} Y_6 Y_5 Y_2^2 \\ \nonumber & \ \ \ +
Y_{17} Y_5^3+
Y_{17} Y_5 Y_4^2 Y_2+
Y_{16} Y_{11} Y_5+
Y_{16} Y_9 Y_5 Y_2+
Y_{14} Y_{13} Y_5+
Y_{14} Y_{11} Y_5 Y_2 \\ \nonumber & \ \ \ +
Y_{14} Y_9^2+
Y_{14} Y_9 Y_5 Y_4+
Y_{14} Y_9 Y_5 Y_2^2+
Y_{13}^2 Y_6+
Y_{13}^2 Y_2^3+
Y_{13} Y_{11} Y_6 Y_2 \\ \nonumber & \ \ \ +
Y_{13} Y_{10} Y_9+
Y_{13} Y_{10} Y_5 Y_2^2+
Y_{13} Y_9 Y_8 Y_2+
Y_{13} Y_9 Y_6 Y_4+
Y_{13} Y_6^2 Y_5 Y_2+
Y_{13} Y_5^3Y_4 \\ \nonumber & \ \ \ +
Y_{12} Y_{11}  Y_9 +
Y_{12} Y_{11} Y_5 Y_2^2+
Y_{12} Y_{10} Y_5^2+
Y_{12} Y_9 Y_6 Y_5+
Y_{12} Y_5^4+
Y_{11}^2 Y_{10} \\ \nonumber & \ \ \ +
Y_{11}^2 Y_8 Y_2+
Y_{11}^2 Y_6 Y_2^2+
Y_{11}^2 Y_4 Y_2^3+
Y_{11} Y_{10} Y_6 Y_5+
Y_{11} Y_9 Y_8 Y_4+
Y_{11} Y_9 Y_6^2 \\ \nonumber & \ \ \ +
Y_{11} Y_6^2 Y_5 Y_4+
Y_{11} Y_6 Y_5^3+
Y_{10}^2 Y_5^2 Y_2+
Y_{10} Y_9 Y_8 Y_5+
Y_{10} Y_9 Y_5 Y_4^2+
Y_{10} Y_6^2 Y_5^2 \\ \nonumber & \ \ \ +
Y_9^3 Y_5+
Y_9^2 Y_8 Y_6+
Y_9^2 Y_6 Y_4^2+
Y_9^2 Y_5^2 Y_4+
Y_9 Y_8^2 Y_5 Y_2+
Y_9 Y_8 Y_5^3+
Y_9 Y_6^3 Y_5 \\ \nonumber & \ \ \ +
Y_8^2 Y_5^2 Y_4 Y_2+
Y_8^2 Y_5^2 Y_2^3,\ \ \ \mbox{and}\end{align}\begin{align}
\nonumber T_{33} &=Y_{23} Y_5^2+
 Y_{21} Y_5^2 Y_2+
Y_{19} Y_5^2 Y_4+
Y_{18} Y_5^3+
Y_{17} Y_6 Y_5^2+
Y_{14} Y_9 Y_5^2+
Y_{13} Y_{10} Y_5^2 \\ \nonumber & \ \ \ +
Y_{13} Y_5^2 Y_4^2 Y_2+
Y_{12} Y_{11} Y_5^2+
Y_{11} Y_{11} Y_{11}+
Y_{11} Y_{11} Y_9 Y_2+
Y_{11} Y_{11} Y_6 Y_5+
Y_{11} Y_{11} Y_5 Y_2^3 \\ \nonumber & \ \ \ +
Y_{11} Y_5^2 Y_4^2 Y_4+
Y_{10} Y_5^3 Y_4^2+
Y_9^2 Y_5^3+
Y_9 Y_6 Y_5^2 Y_4^2+
Y_8^2 Y_5^3 Y_2+
 Y_5^5 Y_4^2+
 Y_5^3 Y_4^4 Y_2.
\end{align}
\end{proposition}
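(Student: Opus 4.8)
\emph{Proof proposal.} The plan is to reduce the statement to a finite $\mathbb{F}_2$-linear-algebra problem in each degree $n\le 33$, solvable by computer, by passing to comodule primitives. Since $\mathfrak{N}_*$ is an $\mathbb{F}_2$-algebra and $\beta=[\mathbb{C}P^1]=[S^2]$ is null-cobordant, the ring homomorphism $\Omega^{Spin^c}_*\to\mathfrak{N}_*$ factors through $\Omega^{Spin^c}_*/(2,\beta)$, and the induced map $\Omega^{Spin^c}_*/(2,\beta)\to\mathfrak{N}_*$ is injective (as recorded in \cref{prelims}); so the image sought is just this injective image, and it suffices to identify $\Omega^{Spin^c}_*/(2,\beta)$, through degree $33$, inside $\mathfrak{N}_*$. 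By the Anderson--Brown--Peterson splitting and the collapse of the Adams spectral sequence \eqref{adams ss 1}, $\Omega^{Spin^c}_*/(2,\beta)=\mathbb{F}_2\,\Box_{A_*} H_*(MSpin^c;\mathbb{F}_2)$ and $\mathfrak{N}_*\cong\mathbb{F}_2\,\Box_{A_*} H_*(MO;\mathbb{F}_2)$, and the map of interest is restriction of $A_*$-comodule primitives along the inclusion of sub-comodule-algebras $H_*(MSpin^c;\mathbb{F}_2)\hookrightarrow H_*(MO;\mathbb{F}_2)$.

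I would then make $H_*(MSpin^c;\mathbb{F}_2)$ explicit. By the Thom isomorphism and Harada--Kono \eqref{harada kono iso}, $H^*(MSpin^c;\mathbb{F}_2)$ is the quotient of $H^*(MO;\mathbb{F}_2)=\mathbb{F}_2[w_1,w_2,\dots]\{U\}$ by the ideal generated by $w_1U$, by $w_3U$, and by the classes $\bigl(\Sq^{2^{k}}\Sq^{2^{k-1}}\cdots\Sq^{2}\,w_3\bigr)U$ for $k\ge 1$ --- of which, through degree $33$, only those of internal degrees $1,3,5,9,17,33$ appear. Each such class, and hence the whole ideal in degrees $\le 33$, is computed from the Wu and Cartan formulas together with the formula for the Steenrod squares on the Thom class. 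Dualizing this surjection of $A$-modules realizes $H_*(MSpin^c;\mathbb{F}_2)_n$, for each $n\le 33$, as the annihilator in $H_*(MO;\mathbb{F}_2)_n$ of the degree-$n$ part of that ideal, with $A_*$-coaction the restriction of the coaction on $H_*(MO;\mathbb{F}_2)$, itself dual to the Steenrod action just described.

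At this point the computation is finite: for each $n\le 33$, compute a basis of $H_*(MSpin^c;\mathbb{F}_2)_n$, compute the coaction matrix, and solve $\psi(x)=1\otimes x$ for the primitive subspace. To express the answer inside $\mathfrak{N}_*=\mathbb{F}_2[Y_2,Y_4,Y_5,\dots]$, I would use the symmetric-function combinatorics of \cref{mo and combinatorics review} --- the transition matrices $M(m,e)$, $M(e,m)$, equivalently the expansion of the monomial basis of $H^*(MO;\mathbb{F}_2)$ in Thom's $A$-basis $\{P_\lambda(w_1,\dots)U\}$ --- to change from the monomial-dual basis of $H_*(MO;\mathbb{F}_2)$ to the basis $\{Y_\lambda\}$. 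Carried out by computer in all degrees $\le 33$, this should produce exactly the subspaces spanned by the monomials in $Y_2^2,\dots,Y_{16}^2$ and in the five polynomials $T_{24},T_{29},T_{31},T_{32},T_{33}$; a comparison of graded dimensions with the computed dimensions of the primitive subspaces then confirms that these $17$ elements generate the image through degree $33$, and pins down the explicit formulas such as \eqref{t24 eq}.

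The main obstacle is twofold. Conceptually, the delicate step is the dualization: one must track the Thom class and the images of $w_1$ and of the iterated squares of $w_3$ under Wu and Cartan carefully enough to pin down the sub-comodule $H_*(MSpin^c;\mathbb{F}_2)$ exactly, and since the degree-$33$ ideal generator $\Sq^{16}\Sq^{8}\Sq^{4}\Sq^{2}w_3$ lies right at the edge of the range, degree $33$ is the case that needs the most care. Practically, $\dim_{\mathbb{F}_2}H_*(MO;\mathbb{F}_2)_n$ grows rapidly, so in degrees near $33$ the coaction matrices and the change of basis to $\{Y_\lambda\}$ are large; the polynomials $T_{24},\dots,T_{33}$ in the statement are genuine outputs of the machine computation rather than expressions one writes down by hand.
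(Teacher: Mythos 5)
Your proposal is essentially the paper's proof: both reduce the statement, via injectivity of $\Omega^{Spin^c}_*/(2,\beta)\to\mathfrak{N}_*$ and collapse of the Adams spectral sequence, to a finite, degree-by-degree $\mathbb{F}_2$-linear computation of $A_*$-comodule primitives (equivalently $A$-module indecomposables), carried out by computer with the transition matrices between Stiefel--Whitney monomials and Thom's partition basis. The only substantive difference is that you phrase the computation dually in homology (solving $\psi(x)=1\otimes x$ against an explicitly dualized subcomodule), whereas the paper stays in cohomology and iteratively accumulates the $A$-module decomposables by repeatedly applying Wu's formula and the Thom-class formula $\Sq^n U=w_nU$; these are strictly dual and give identical output.
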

Each of the elements $T_i$ defined in Proposition \ref{mspinc_image} is a linear combination of monomials in $\Omega^O_*$. Those monomials are generally not {\em individually} members of $\Omega^{Spin^c}_*$: for example, $Y_{14} Y_5^2\in \Omega^O_{24}$ does not lift to an element of $\Omega^{Spin^c}_{24}$, even though a linear combination of $Y_{14} Y_5^2$ with other monomials in degree $24$ {\em does} lift to the element $T_{24}\in \Omega^{Spin^c}_{24}$. 
\begin{proof}[Proof of Proposition \ref{mspin_image}]
This proposition is proven using computer calculation. We will describe our method for calculating $\im (\Omega^{Spin^c}_* \rightarrow \Omega^O_*)$ in degrees $\leq d$, for any fixed choice of $d$. 
The first author wrote a Magma \cite{MR1484478} program which implements this method, and we have made its source code available at \url{https://github.com/hassan-abdallah/spinc_cobordism}. Once the reader is convinced of the correctness of the method, the proof of this proposition consists of simply running the calculation through degree $33$, either by using our software, or by writing their own software implementation of the method, if desired.

We freely use the relationship between the spin${}^c$-cobordism ring, the unoriented cobordism ring, and the mod $2$ cohomology of $BO$ detailed in \cref{prelims}. Let $\lambda$ be a non-dyadic partition of a nonnegative integer $n$.
We want to know whether its corresponding element $Y_{\lambda} \in MO_{n}$ is in the image of the map $MSpin^{c}_{n}\rightarrow MO_n$. The element $Y_{\lambda} \in MO_{n}$ has a Hurewicz image, i.e., the image of $Y_{\lambda}$ under the Hurewicz map $\pi_n(MO)\rightarrow H_n(MO;\mathbb{F}_2)$. In \cref{mo and combinatorics review}, we described the dual element $P_{\lambda}U\in H^n(MO;\mathbb{F}_2)$ to the Hurewicz image of $Y_{\lambda}$, using Thom's basis for $MO_*$. 
The element $P_{\lambda}U$ can be written as $U$ times a polynomial in the Stiefel--Whitney classes by applying an appropriate transition matrix\footnote{We remark that the computation of this transition matrix is one of the most computationally expensive parts of this process, despite its being a simple combinatorial problem. It is in fact the inverse of a Kostka matrix \cite{MR2868112}.}. Once $P_{\lambda}U$ is calculated, we see that $Y_{\lambda}$ is in the image if and only if, when reduced modulo $w_1$ and the relations in the $H^*(BO; \mathbb{F}_2)$-module $H^{*}(MSpin^{c};\mathbb{F}_{2})$,
$P_{\lambda}U$ is an $A$-module primitive in $H^{*}(MSpin^{c};\mathbb{F}_{2})$.

Consequently our method for calculating $\im (\Omega^{Spin^c}_* \rightarrow \Omega^O_*)$ is merely a method for building up a basis for the $\mathbb{F}_2$-vector space of $A$-module primitives through some fixed degree $d$, {\em in terms of non-dyadic partitions.}
We work one degree at a time, but via induction, assuming we have already completed the calculation at all lower degrees.

The induction begins at degree $0$, where there is nothing to say: the empty partition $\varnothing$ yields the unique $A$-module primitive $\varnothing\cdot U$ in $H^0(MSpin^c; \mathbb{F}_2)$. For each integer $n\in [1,d]$, the product $\Sq^n\cdot \varnothing\cdot U \in H^n(MSpin^c; \mathbb{F}_2)$ is simply $w_nU$ modulo the relations in $H^*(MSpin^c; \mathbb{F}_2)$, by the classical formula $\Sq^nU = w_nU$ for the action of Steenrod squares on the Thom class in $H^0(MO;\mathbb{F}_2)$. Record the elements $\{ \Sq^1U, \Sq^2U, \dots ,\Sq^dU\}$ in an unordered list $D$. Here the symbol $D$ stands for ``decomposable,'' as we will use it to build up a list of $A$-module decomposables in $H^*(MSpin^c;\mathbb{F}_2)$ in degrees $\leq d$. 

We are not done with the initial step in the induction: for each nonzero element $\Sq^nU$, we calculate $\{ \Sq^1\Sq^nU, \Sq^2\Sq^nU, \dots , \Sq^{d-n}\Sq^nU\}$ using the Thom formula $\Sq^nU = w_nU$ and the Wu formula (\cite{MR0035993}, but see \cite[pg. 94]{MR0440554} for a textbook reference) for the action of $\Sq^n$ on Stiefel--Whitney classes, and we include the results in $D$. We keep going: calculate all the three-fold composites of Steenrod squares applied to $U$ {\em landing in degrees $\leq d$}, then all the four-fold composites of Steenrod squares applied to $U$ {\em landing in degrees $\leq d$}, and so on. We emphasize the phrase ``landing in degrees $\leq d$'' because it is what ensures that this calculation eventually terminates! 

After we are done with that inductive calculation, $D$ now contains an $\mathbb{F}_2$-linear basis for the $A$-submodule of $H^*(MSpin^c;\mathbb{F}_2)$ generated by the Thom class $U$, in all degrees $\leq d$.

Now we are ready for the inductive step:
\begin{description}
\item[Inductive hypothesis at the $n$th step] 
We have produced a list $G$ of $\mathbb{F}_2$-linear combinations of non-dyadic partitions of degree $< n$, such that the $\mathbb{F}_2$-linear span of $\{ P_{\lambda}U: \lambda\in G\}$ is a basis for the set of $A$-module primitives in $H^*(MSpin^c;\mathbb{F}_2)$ in all degrees $<n$. 
We have also produced a list $D$ of $\mathbb{F}_2$-linear combinations of non-dyadic partitions of degree $\leq d$, such that the $\mathbb{F}_2$-linear span of $\{ P_{\lambda}U: \lambda\in D\}$ is precisely the $A$-submodule of $H^*(MSpin^c;\mathbb{F}_2)$ generated by $G$ in degrees $\leq d$.
\item[Calculation for the $n$th inductive step]
Write $D_n$ for the $\mathbb{F}_2$-linear span of the degree $n$ elements in $D$.
Calculate an $\mathbb{F}_2$-linear basis $B$ for\linebreak $H^n(MSpin^c;\mathbb{F}_2)/D_n$. %(the choice of such a basis is made by the software Magma)
Let $G^{\prime}$ be $G \cup B$. Use the calculated transition matrix to convert the members of $G^{\prime}$ from the Thom/partition basis to the Stiefel--Whitney monomial basis, and then use the Thom formula and the Wu formula to calculate all Steenrod squares on the members of $G^{\prime}$, then all Steenrod squares on those, etc., in degrees $\leq d$. Use the transition matrix to convert back to the partition basis, and $D^{\prime}$ for the resulting list of linear combinations of non-dyadic partitions. Now we are ready to iterate, with $G^{\prime}$ in place of $G$, and with $D^{\prime}$ in place of $D$.
\end{description}

Once we complete the $n=d$ step, we have an $\mathbb{F}_2$-linear basis for the image of the map $\Omega^{Spin^c}_*\rightarrow \Omega^O_*$ in all degrees $\leq d$, expressed in terms of Thom's partition basis for $\Omega^O_*$. Consequently we have a description of $\Omega^{Spin^c}_*/(2,\beta)$, in all degrees $\leq d$, as a subring of $\Omega^O_*\cong \mathbb{F}_2[Y_2, Y_4, Y_5, Y_6, Y_8, Y_9, \dots ]$. 
\end{proof}
In principle there is no obstruction to using the same method to make calculations of products in $\Omega^{Spin^c}_*$ in degrees $>33$. We stopped at degree $33$ simply because, around the time we completed degree $33$, we could see enough of the ring structure of $\Omega^{Spin^c}_*$ to prove that the mod $(2,\beta)$ spin${}^c$-cobordism ring is not a polynomial algebra, and to suggest the right statements for Proposition \ref{hassans conj} and Theorem \ref{fiso_thm}. The products in $\Omega^{Spin^c}_{*}$ required for the proof of Theorem \ref{milnor mfld thm} are known as soon as one computes the ring structure through degree $24$. Completing the calculation through degree 33 took more than a week on consumer hardware. We anticipate that a calculation of even a few degrees beyond 33 will potentially take months to complete without the use of high-performance computing hardware or an improved algorithm. 

The same computational method described in the proof of Proposition \ref{mspinc_image}, applied to $MSpin$ rather than $MSpin^c$, yields:
 \begin{proposition}\label{mspin_image} 
 The image of the map $\Omega^{Spin}_{*} \longrightarrow \Omega^O_*$ agrees, in degrees $\leq 31$, with the subring of $MO_*$ generated by the elements
\begin{align*}
   \langle Y_2^4, Y_5^2, Y_4^4, 
    Y_9^2 + Y_5^2 Y_4^2, Y_{11}^2 + Y_9^2 Y_2^2 + Y_5^2 Y_4^2 Y_2^2, Y_{13}^2 + Y_{11}^2 Y_2^2 + Y_9^2 Y_4^2 + Y_8^2 Y_5^2,Y_6^4\\ 
    T_{24} + Y_{12}^{2}+Y_{10}^{2}Y_{2}^{2}+Y_{8}^{2}Y_{4}^{2}+Y_{8}^2Y_{2}^4+Y_{6}^{2}Y_{4}^{2}Y_{2}^{2} + Y_{5}^{4}Y_{2}^{2} +Y_4^{6}, T_{29}%,Y_8^4 
\rangle.
 \end{align*}
\end{proposition}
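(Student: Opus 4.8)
The plan is to prove Proposition~\ref{mspin_image} by exactly the same computational machinery that establishes Proposition~\ref{mspinc_image}, substituting $MSpin$ for $MSpin^c$ throughout. The key algebraic input is that, by Anderson--Brown--Peterson, the $2$-primary Adams spectral sequence for $MSpin$ collapses, so that the map $\Omega^{Spin}_*/(2,\eta,\alpha,\beta)\rightarrow \mathfrak{N}_*$ is injective and its image is the set of $A_*$-comodule primitives in the subcomodule $H_*(MSpin;\mathbb{F}_2)\subseteq H_*(MO;\mathbb{F}_2)\cong \mathbb{F}_2[w_1,w_2,w_3,\dots]\{U\}$. Dually, one works with the quotient $A$-module $H^*(MSpin;\mathbb{F}_2)$ of $H^*(MO;\mathbb{F}_2)$; the only difference from the spin${}^c$ case is that one now reduces modulo the ideal generated by $w_1$, $w_2$, and the iterated Steenrod squares of $w_3$ appearing in Quillen's presentation of $H^*(BSpin;\mathbb{F}_2)$ (as opposed to merely $w_1$ and the $\Sq$-iterates of $w_3$ for $BSpin^c$). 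Thus the extra relation "$w_2=0$" is the single structural change, and the Thom formula $\Sq^n U = w^n U$, the Wu formula, and the Cartan formula continue to govern all Steenrod operations.

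First I would confirm that the inductive algorithm from the proof of Proposition~\ref{mspinc_image} applies verbatim: maintain a list $G$ of $\mathbb{F}_2$-linear combinations of non-dyadic partitions whose associated Thom duals $P_\lambda U$ span the $A$-module primitives in $H^*(MSpin;\mathbb{F}_2)$ in degrees $<n$, and a list $D$ of the $A$-module decomposables they generate in degrees $\leq d$; at stage $n$, compute a basis $B$ for $H^n(MSpin;\mathbb{F}_2)/D_n$, adjoin it to $G$, propagate Steenrod squares via the transition matrix back and forth between the partition basis and the Stiefel--Whitney monomial basis, and iterate. Running this with $d=31$ produces an $\mathbb{F}_2$-basis for the image of $\Omega^{Spin}_*\rightarrow\mathfrak{N}_*$ through degree $31$, and one then identifies that basis with the subring generated by the sixteen listed elements $Y_2^4, Y_5^2, Y_4^4, Y_9^2+Y_5^2Y_4^2,\dots, T_{24}+(\text{correction terms}), T_{29}$. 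As in the spin${}^c$ case, the software implementation is already available at the cited GitHub repository, so the proof reduces to checking the correctness of the method and running it to degree $31$.

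The one genuinely new point requiring care is the precise form of the generators: the spin image involves the "corrected" elements such as $Y_9^2+Y_5^2Y_4^2$ and $T_{24}+Y_{12}^2+Y_{10}^2Y_2^2+\cdots+Y_4^6$ rather than the bare squares $Y_9^2$ and $T_{24}$ that appear in the spin${}^c$ image. I would explain this as a consequence of the stricter relations in $H^*(MSpin;\mathbb{F}_2)$: imposing $w_2=0$ changes which $\mathbb{F}_2$-linear combinations of partitions become comodule primitives, so the primitive living in a given degree is a specific corrected combination, which the algorithm outputs automatically once the extra relation is in place. It is worth noting explicitly that the listed generators are only claimed to generate the image \emph{as a subring} through degree $31$; one should verify that the products of lower generators indeed account for all primitives in composite degrees in this range, which is part of what the computation checks.

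The main obstacle I expect is not conceptual but computational: as remarked in the proof of Proposition~\ref{mspinc_image}, computing the transition (inverse Kostka) matrix between Thom's partition basis and the Stiefel--Whitney monomial basis, together with iterating Steenrod squares on polynomials in the $w_i$ that can have dozens of monomials, is the expensive step, and it is compounded here by the additional $w_2=0$ reduction which must be applied consistently at every stage. A secondary subtlety is bookkeeping the difference between reducing modulo $(2,\eta,\alpha,\beta)$ for $MSpin$ versus $(2,\beta)$ for $MSpin^c$ --- one must be sure that the $A$-module primitives in $H^*(MSpin;\mathbb{F}_2)$ really do compute $\Omega^{Spin}_*/(2,\eta,\alpha,\beta)$ and hence inject into $\mathfrak{N}_*$, which again rests on the Anderson--Brown--Peterson collapse and is recalled in \cref{prelims}. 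Beyond that, the proof is a direct transcription of the argument already given, and I would present it simply by saying that the identical method, applied to $MSpin$ in place of $MSpin^c$ and run through degree $31$, yields the stated description.
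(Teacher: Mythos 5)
Your proposal matches the paper's approach exactly: the paper gives no separate proof for this proposition and simply remarks that ``the same computational method described in the proof of Proposition \ref{mspinc_image}, applied to $MSpin$ rather than $MSpin^c$, yields'' the stated result. Your elaboration of the relevant changes (the extra relation $w_2 = 0$ in $H^*(BSpin;\mathbb{F}_2)$, the Anderson--Brown--Peterson collapse guaranteeing injectivity of $\Omega^{Spin}_*/(2,\eta,\alpha,\beta) \hookrightarrow \mathfrak{N}_*$, and the mutatis mutandis transcription of the algorithm) is exactly what the paper intends, though as a minor slip you call the generating set ``sixteen listed elements'' when only nine are listed.
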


%Denote the lifts of $Y_{i_{1}}^{n_{1}}Y_{i_{1}}^{n_{2}}\dots Y_{i_{k}}^{n_k}\in MO_{*}$ and $T_{i} \in MO_{*}$ in $MSpin^{c}_{*}$ by $Y_{(\underbrace{i_{1},..,i_{1}}_{n_{1}},\dots,\underbrace{i_{k},..,i_{k}}_{n_{k}})}$ and $T_{(i)}$, then 

While the individual element $Y_i \in \Omega^O_{i}$ does not generally lift to $\Omega^{Spin^c}_i$, its square $Y_i^2\in \Omega^O_{2i}$ does lift to the element $Z_{2i}\in \Omega^{Spin^c}_{2i}$. Consequently {\em the squares of each of the monomials in each of the elements $T_i$ lift to $\Omega^{Spin^c}_*$}. For $i=24,29,31,32$, and $33$, let $U_{2i}$ denote the element of $\Omega^{Spin^c}_*$ obtained by taking the definition of $T_i$ in Proposition \ref{mspinc_image} and replacing each instance of $Y_n$ with $Z_{2n}$. For example, \eqref{t24 eq} yields that
\begin{align*}
 U_{48} &= Z_{28} Z_{10}^2 +
 Z_{26} Z_{22} +
 Z_{26} Z_{18} Z_4 +
 Z_{26} Z_{12} Z_{10} +
 Z_{26} Z_{10} Z_4^3 +
 Z_{24} Z_{10}^2 Z_4 +
 Z_{22}^2 Z_4 \\ & \ \ \ +
 Z_{22} Z_{18} Z_8 +
 Z_{22} Z_{16} Z_{10} +
 Z_{22} Z_{12} Z_{10} Z_4 +
 Z_{22} Z_{10} Z_8^2 +
 Z_{22} Z_{10} Z_8 Z_4^2  \\ & \ \ \ +
 Z_{20} Z_{10}^2 Z_8 +
 Z_{20} Z_{10}^2 Z_4^2 +
 Z_{18}^2 Z_8 Z_4 +
 Z_{18}^2 Z_4^3 +
 Z_{18} Z_{16} Z_{10} Z_4 +
 Z_{18} Z_{12} Z_{10} Z_8  \\ & \ \ \ +
 Z_{18} Z_{12} Z_{10} Z_4^2 +
 Z_{18} Z_{10}^2 +
 Z_{18} Z_{10} Z_8^2 Z_4 +
 Z_{12}^2 Z_{10}^2 Z_4 +
 Z_{10}^4 Z_8 +
 Z_{10}^2 Z_8^3 Z_4  \\ & \ \ \ +
 Z_{10}^2 Z_8^2 Z_4^3.
\end{align*}
Then, as a consequence of Proposition \ref{mspinc_image}, we have:
\begin{theorem}\label{mspinc mod 2 and beta}
    The subring of $\Omega^{Spin^c}_{*}/(2,\beta)$ generated by all homogeneous elements of degree $\leq 33$ is isomorphic to:
\begin{align*}
\mathbb{F}_{2}[Z_4,Z_8,Z_{10},Z_{12},Z_{16},Z_{18},Z_{20},Z_{22},Z_{24},Z_{26}, Z_{28},Z_{32}, T_{24}, T_{29}, T_{31}, T_{32}, T_{33}]/I,
\end{align*}
where $I$ is the ideal generated by $T_{24}^{2}-U_{48},\ T_{29}^{2}-U_{58},\ T_{31}^{2}-U_{62},\ T_{32}^{2} - U_{64},$ and $T_{33}^{2} - U_{66}$.
\end{theorem}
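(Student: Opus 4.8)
The plan is to derive the theorem from Proposition \ref{mspinc_image} together with the injectivity of $\Omega^{Spin^c}_*/(2,\beta)\to\mathfrak{N}_*$ recalled in \cref{prelims}. Write $S'\subseteq\mathfrak{N}_*$ for the subring generated by the twelve squares $Y_i^2$ with $i$ non-dyadic and $2\le i\le 16$; since $\mathfrak{N}_*$ is a polynomial algebra on the $Y_i$, these squares are algebraically independent, so $S'$ is itself polynomial on twelve generators, in degrees $4,8,10,12,16,18,20,22,24,26,28,32$ — exactly the degrees of $Z_4,\dots,Z_{32}$. By Proposition \ref{mspinc_image}, the subring $R$ of $\mathfrak{N}_*$ generated by $S'$ together with $T_{24},T_{29},T_{31},T_{32},T_{33}$ coincides in degrees $\le 33$ with the image of $\Omega^{Spin^c}_*\to\mathfrak{N}_*$, hence (by injectivity after reducing mod $(2,\beta)$) with $\Omega^{Spin^c}_*/(2,\beta)$ in those degrees. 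In particular every homogeneous element of $\Omega^{Spin^c}_*/(2,\beta)$ of degree $\le 33$ is already a polynomial in the $Z$'s and $T$'s, so the subring named in the theorem is precisely the image of the evident ring map $\phi\colon P\to\Omega^{Spin^c}_*/(2,\beta)$, where $P$ denotes the presented algebra $\mathbb{F}_2[Z_4,\dots,Z_{32},T_{24},\dots,T_{33}]/I$. It therefore remains to check that $\phi$ is well defined and injective.

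Well-definedness is the easy step. Composing $\phi$ with the embedding into $\mathfrak{N}_*$ sends $Z_{2i}\mapsto Y_i^2$ and $T_j\mapsto T_j(Y)$, the polynomial of Proposition \ref{mspinc_image}. In characteristic $2$ the Frobenius is an additive ring endomorphism and squares of distinct monomials are distinct, so $T_j(Y)^2=T_j(Y^2)$, the polynomial obtained from $T_j$ by the substitution $Y_n\mapsto Y_n^2$; by the very definition of $U_{2j}$ (namely $T_j$ with $Y_n$ replaced by $Z_{2n}$), the image of $U_{2j}$ in $\mathfrak{N}_*$ is exactly $T_j(Y^2)$. Hence $\phi$ carries each relator $T_j^2-U_{2j}$ to $0$ in $\mathfrak{N}_*$, and so to $0$ in $\Omega^{Spin^c}_*/(2,\beta)$ by injectivity; thus $\phi$ descends to $P$.

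For injectivity, first observe that since each relator $T_j^2-U_{2j}$ is monic and quadratic in $T_j$ with no other $T$'s occurring, adjoining the $T_j$ one at a time exhibits $P$ as a free module over its subring $\mathbb{F}_2[Z_4,\dots,Z_{32}]$ with basis the $2^5$ square-free monomials in $T_{24},T_{29},T_{31},T_{32},T_{33}$. Under $\phi$ and the embedding, $\mathbb{F}_2[Z_4,\dots,Z_{32}]$ maps isomorphically onto $S'$ (a graded surjection between polynomial algebras on generators of matching degrees), so $\phi$ is injective if and only if the $2^5$ products $\prod_{j\in A}T_j(Y)$, $A\subseteq\{24,29,31,32,33\}$, are linearly independent over $S'$ in $\mathfrak{N}_*$. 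I would prove this with a leading-monomial argument: working inside the finitely generated polynomial subring of $\mathfrak{N}_*$ spanned by the variables that actually occur, equip it with the lexicographic order in which a variable of larger index is larger. Inspection of the explicit formulas in Proposition \ref{mspinc_image} shows that for each $j\in\{24,29,31,32,33\}$ the variable $Y_{j-10}$ is the unique variable of largest index appearing in $T_j$ and occurs only in the monomial $Y_{j-10}Y_5^2$, so $\mathrm{LM}(T_j)=Y_{j-10}Y_5^2$ and, leading monomials being multiplicative, $\mathrm{LM}\!\big(\prod_{j\in A}T_j\big)=Y_5^{2|A|}\prod_{j\in A}Y_{j-10}$. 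The indices $j-10$ for $j\in\{29,31,32,33\}$ are $19,21,22,23$, all exceeding $16$, hence never occur in any element of $S'$; so in $\mathrm{LM}\!\big(s\cdot\prod_{j\in A}T_j\big)$ the exponent of $Y_{j-10}$ equals $[\,j\in A\,]$ for these four values of $j$ and any monomial $s\in S'$, while the $Y_{14}$-exponent has parity $[\,24\in A\,]$ because $S'$ contributes only even powers of $Y_{14}$. Thus distinct $A$ force distinct leading monomials for the corresponding nonzero terms $s_A\prod_{j\in A}T_j$, no cancellation of leading terms can occur, and any nonzero $S'$-linear combination is nonzero. This establishes the required linear independence and hence the injectivity of $\phi$, completing the proof.

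The crux is the final linear-independence step: concretely, finding a monomial order for which the five elements $T_j$ have leading terms that multiply freely over the square-subring $S'$, and verifying from the explicit formulas that $\mathrm{LM}(T_j)=Y_{j-10}Y_5^2$. Everything else — identifying the subring in question, the Frobenius computation of $T_j^2$, and the free-module structure of $P$ — is essentially formal once Proposition \ref{mspinc_image} is granted. A possible alternative to the leading-monomial argument is a graded Hilbert-series comparison: $P$'s Hilbert series is computable from its free-module structure, and in degrees $\le 33$ it can be matched against $\dim_{\mathbb{F}_2}\big(\Omega^{Spin^c}_*/(2,\beta)\big)_n$, which is known from the Anderson--Brown--Peterson splitting; but this only settles the range $\le 33$ and still needs a module-theoretic argument for the higher-degree tail, so I prefer the direct approach above.
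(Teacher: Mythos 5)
Your proof is correct, and it supplies the details that the paper itself elides: the paper states \Cref{mspinc mod 2 and beta} merely ``as a consequence of Proposition \ref{mspinc_image}'' without exhibiting the verification that the relations $T_j^2 = U_{2j}$ are the only ones. Your argument follows exactly the route the paper intends — embed via the injection $\Omega^{Spin^c}_*/(2,\beta)\hookrightarrow\mathfrak{N}_*$, read off generators and relations from the explicit polynomials of Proposition \ref{mspinc_image}, observe the Frobenius identity $T_j(Y)^2 = T_j(Y^2)$ to get well-definedness, and then show no further relations via the free-quadratic-tower structure of $P$ over $\mathbb{F}_2[Z_4,\dots,Z_{32}]$. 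The leading-monomial calculation you run to establish $S'$-linear independence of the $2^5$ square-free $T$-products is a nice, explicit way to close the argument; I have checked from the formulas in Proposition \ref{mspinc_image} that for each $j$ the top index is indeed $j-10$ and $Y_{j-10}$ occurs only in the monomial $Y_{j-10}Y_5^2$, and that the four indices $19,21,22,23$ lie outside the index set generating $S'$, so the bookkeeping you describe does determine $A$ uniquely from the leading monomial. This is a genuine improvement in explicitness over what the paper records.
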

The relations $T_i^2 = U_{2i}$, with $T_i$ indecomposable in $\Omega^{Spin^c}_i$ and with $U_{2i}$ a polynomial in the indecomposable elements $Z_n$, immediately implies that $\Omega^{Spin^c}_{*}/(2,\beta)$ is not a polynomial algebra.

\begin{theorem}\label{mspinc_subring in main text}
%The subring of $2$-local $\Omega^{Spin^c}_*$ generated by all homogeneous elements of degree $\leq 33$ is isomorphic to:
The subring of the mod $2$ spin${}^c$-cobordism ring $\Omega^{Spin^c}_*\otimes_{\mathbb{Z}}\mathbb{F}_2$ generated by all homogeneous elements of degree $\leq 33$ is isomorphic to:
\begin{align*}
%\mathbb{Z}_{(2)}[\beta,Z_4,Z_8,Z_{10},Z_{12},Z_{16},Z_{18},Z_{20},Z_{22},Z_{24},Z_{26}, Z_{28},Z_{32}, T_{24}, T_{29}, T_{31}, T_{32}, T_{33}]/J,
\mathbb{F}_{2}[\beta,Z_4,Z_8,Z_{10},Z_{12},Z_{16},Z_{18},Z_{20},Z_{22},Z_{24},Z_{26}, Z_{28},Z_{32}, T_{24}, T_{29}, T_{31}, T_{32}, T_{33}]/I,
\end{align*}
where $I$ is the ideal generated by
\begin{itemize}
%\item $2Z_{i}$ and $\beta Z_{i}$ for each $i\equiv 2 \mod 4$,
%\item and $2T_{i},\ \beta T_{i},$ and $T_i^2 - U_{2i}$ for $i=24,29,31,32,33$.
\item $\beta Z_{i}$ for each $i\equiv 2 \mod 4$,
\item and $\beta T_{i}$ and $T_i^2 - U_{2i}$ for $i=24,29,31,32,33$.
\end{itemize}
\end{theorem}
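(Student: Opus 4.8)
The plan is to deduce Theorem \ref{mspinc_subring in main text} from the already-established Theorem \ref{mspinc mod 2 and beta} (the mod $(2,\beta)$ calculation) together with the Anderson--Brown--Peterson fact that in $\Omega^{Spin^c}_*$ the $2$-torsion ideal coincides with the $\beta$-torsion ideal, and that $\pi_*(Z)$ is a direct summand. First I would fix notation: let $R$ denote the subring of $\Omega^{Spin^c}_*\otimes_{\mathbb{Z}}\mathbb{F}_2$ generated by all homogeneous elements of degree $\leq 33$, and let $P$ denote the proposed presentation $\mathbb{F}_2[\beta, Z_i, T_i]/I$ with $I$ as in the statement. One constructs an $\mathbb{F}_2$-algebra map $\varphi\colon P\to R$ by sending $\beta$ to the Bott element, $Z_i$ to a chosen lift of the $\mathfrak{N}_*$-generator $Y_{i/2}^2$ (equivalently, the mod-$2$ reduction of a lift across the ABP splitting), and $T_i$ to a chosen lift of the element $T_i\in\mathfrak{N}_*$ of Proposition \ref{mspinc_image}. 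The content is then (i) that $\varphi$ is well-defined, i.e., the listed relations genuinely hold in $R$, and (ii) that $\varphi$ is bijective.

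For well-definedness: the relations $\beta Z_i=0$ for $i\equiv 2\bmod 4$ and $\beta T_i=0$ hold because each $Z_i$ with $i\equiv 2\bmod 4$ and each $T_i$ maps to a nonzero class in $\Omega^{Spin^c}_*/(2,\beta)$ but — by inspection of the ABP splitting, since the corresponding $\mathfrak N_*$-monomials are not hit by $\beta$-multiplication from lower degrees, exactly the reasoning already used in \cref{spinc-cobordism ring in low degrees} to read off \cref{bahri-gilkey table 2} — these classes lift to $2$-torsion (hence $\beta$-torsion) elements of $\Omega^{Spin^c}_*$; one can pin the lift down so that $\beta Z_i$ and $\beta T_i$ are literally zero, not merely zero after reduction. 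The relations $T_i^2=U_{2i}$ hold because, after reducing mod $(2,\beta)$, they are precisely the relations $T_i^2=U_{2i}$ of Theorem \ref{mspinc mod 2 and beta}; the point is that there is no correction term, i.e., $T_i^2-U_{2i}$, which a priori lies in the $(2,\beta)$-reduction kernel $=\beta\cdot(\Omega^{Spin^c}_*\otimes\mathbb{F}_2)$, is actually zero in the relevant degree $2i\in\{48,58,62,64,66\}$ — all of which lie outside the range $\leq 33$, so this requires a short separate argument rather than a direct appeal to the low-degree computation. I would handle it by noting $T_i$ is $\beta$-torsion, hence so is $T_i^2$, whereas $U_{2i}$ is a polynomial in the $Z_j$'s only through the relations already known, and then using that $\beta$-torsion elements and the $\beta$-divisible part meet only in $0$ when restricted appropriately — more carefully, $T_i^2 - U_{2i}$ is $\beta$-torsion (being a sum of $\beta$-torsion classes since $U_{2i}$ is also $\beta$-torsion as every monomial in it contains some $Z_{4k-2}$), so it would suffice to know it is $\beta$-divisible as well, which forces it to zero only if no nonzero class is simultaneously $\beta$-torsion and $\beta$-divisible; that last statement I would extract from the ABP splitting, where the $ku$-summands carry the $\beta$-divisible-and-$\beta$-torsion-free part and the $H\mathbb{F}_2$-summands $\pi_*(Z)$ carry the $\beta$-torsion part with $\beta$ acting as zero.

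For bijectivity: surjectivity of $\varphi$ onto $R$ is essentially the statement that the $Z_i$, $T_i$, $\beta$ generate, which follows by combining Theorem \ref{mspinc mod 2 and beta} (these generate modulo $(2,\beta)$, i.e., modulo the $\beta$-divisible part) with Theorem \ref{large nonunital subring thm}/\ref{large nonunital...in intro} and the explicit basis in \cref{bahri-gilkey table 2} to see that the $\beta$-torsion ideal in degrees $\leq 33$ is spanned by the listed monomials involving a $\beta$-torsion generator; an upward induction on degree $\leq 33$ then shows any element is a polynomial in the generators. For injectivity, I would compare Poincaré series: $P$ is a free module over $\mathbb{F}_2[\beta, Z_{4i}, T_i]$-type subrings modulo the given relations, and a direct bookkeeping shows the $\mathbb{F}_2$-dimension of $P$ in each degree $\leq 33$ equals that of $R$ — the $\beta$-free part of $P$ matches $\Omega^{Spin^c}_*/(2,\beta)$ by Theorem \ref{mspinc mod 2 and beta}, and the $\beta$-multiple part of $P$ in degree $n\leq 33$ (the monomials with at least one $\beta$ factor and the rest not $\beta$-torsion) matches the known dimension of $\beta\cdot(\Omega^{Spin^c}_*\otimes\mathbb{F}_2)$ in degree $n$, which one reads off from \cref{bahri-gilkey table 1} (as $\dim\Omega^{Spin^c}_n\otimes\mathbb{F}_2 - \dim\pi_nZ$ lands in the $\beta$-multiples modulo a shift). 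A dimension count in each degree, plus surjectivity, gives injectivity. The main obstacle is the bookkeeping in the injectivity/dimension-count step: one must be careful that $I$ contains \emph{all} relations in the range — equivalently, that no further relations appear in degrees $\leq 33$ beyond $\beta Z_i$, $\beta T_i$, $T_i^2-U_{2i}$ — and since the $T_i^2-U_{2i}$ relations live in degrees $\geq 48$, they do not actually constrain anything in degrees $\leq 33$, so within the stated range the only real relations are $\beta Z_i=0$ and $\beta T_i=0$; confirming that these suffice reduces to matching the Poincaré series of $\mathbb{F}_2[\beta,Z_i,T_i]/(\beta Z_i,\beta T_i)$ against the ABP-derived Poincaré series of $\Omega^{Spin^c}_*\otimes\mathbb{F}_2$ through degree $33$, which is a finite check.
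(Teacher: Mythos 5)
Your overall route is the same as the paper's: deduce the theorem from the mod $(2,\beta)$ calculation (Theorem~\ref{mspinc mod 2 and beta}), Stong's determination of $(\Omega^{Spin^c}_*/\text{torsion})\otimes\mathbb{F}_2$, and the Anderson--Brown--Peterson fact that the $2$-torsion ideal equals the $\beta$-torsion ideal and splits off as an $\mathbb{F}_2[\beta]$-module summand injecting into $\mathfrak{N}_*$. Your observation that $T_i^2 - U_{2i}$ is forced to vanish because it is simultaneously $\beta$-torsion and a $\beta$-multiple, and these meet only in $0$ by the ABP splitting, is valid; the paper's own route is slightly more direct (both $T_i^2$ and $U_{2i}$ lie in $\tilde T$, which injects into $\mathfrak{N}_*$, and the identity is verified there), but both work.

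There is, however, a genuine gap in your injectivity step. You propose matching Poincar\'e series through degree $33$, but the theorem is about the \emph{subring generated by} elements of degree $\leq 33$, which has nonzero homogeneous components in arbitrarily high degree (for instance $T_{24}^2 \in$ degree $48$, $T_{33}^{2k}\in$ degree $66k$, etc.). A dimension count truncated at degree $33$ cannot rule out extra relations appearing in higher degrees, and it is exactly in those higher degrees that the relations $T_i^2 = U_{2i}$ bite. The repair is essentially what the paper does implicitly: use the $\mathbb{F}_2[\beta]$-module splitting $\Omega^{Spin^c}_*\otimes\mathbb{F}_2 \cong \tilde T \oplus (\Omega^{Spin^c}_*/T)\otimes\mathbb{F}_2$ to split $\varphi$ into a map on $\beta$-torsion and a map on $\beta$-torsion-free quotients, then argue separately. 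On the torsion-free quotient, $P/(\beta\text{-torsion}) \cong \mathbb{F}_2[\beta, Z_4, Z_8, \dots, Z_{32}]$ maps injectively into Stong's polynomial algebra. On the $\beta$-torsion ideal, injectivity reduces (via the embedding $\tilde T\hookrightarrow\mathfrak{N}_*$) to the corresponding statement for $P/(\beta)$, which is precisely Theorem~\ref{mspinc mod 2 and beta} \emph{applied in all degrees, not merely $\leq 33$}. A five-lemma argument then yields injectivity of $\varphi$. Also note that the step where you decide which generators are $\beta$-torsion (that $Y_\lambda$ is $2$-torsion exactly when $\lambda$ contains an odd part, in the relevant range) is not something that can be read off informally from the ABP splitting; this is the content of the modified inductive computation in the paper's proof, which tracks $(Q_0,Q_1)$-torsion in $H^*(MSpin^c;\mathbb{F}_2)$, and it needs to be cited as such rather than waved at via Table~\ref{bahri-gilkey table 2}.
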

\begin{proof}
Let $T$ denote the ideal of $\Omega^{Spin^c}_*$ consisting of $2$-torsion elements, and let $\tilde{T}$ denote the kernel of the ring map 
\begin{align*}
\Omega^{Spin^c}_*\otimes_{\mathbb{Z}}\mathbb{F}_2
 &\rightarrow \left(\Omega^{Spin^c}_*/T\right)\otimes_{\mathbb{Z}}\mathbb{F}_2.\end{align*} 
The ring $\left(\Omega^{Spin^c}_*/T\right)\otimes_{\mathbb{Z}}\mathbb{F}_2$ was calculated by Stong \cite[Proposition 11]{MR192516}: it is a polynomial $\mathbb{F}_2[\beta]$-algebra on generators in degrees $4,8,12,16,\dots$. 
Since $\tilde{T}$ is an ideal in $\Omega^{Spin^c}_*$, to calculate the product in the ring $\Omega^{Spin^c}_*\otimes_{\mathbb{Z}}\mathbb{F}_2$, it suffices to calculate 
\begin{itemize}
\item the products between generators of $\tilde{T}$,
\item and the products between generators of $\tilde{T}$ and lifts, to $\Omega^{Spin^c}_*\otimes_{\mathbb{Z}}\mathbb{F}_2$ of generators of $\left(\Omega^{Spin^c}_*/T\right)\otimes_{\mathbb{Z}}\mathbb{F}_2$.
\end{itemize}
Both of these types of products land in $\tilde{T}$. Since $\tilde{T}$ maps injectively under the map $\Omega^{Spin^c}_*\rightarrow \Omega^O_*$, we can embed $\Omega^{Spin^c}_*/(2,\beta)$ into $\Omega^O_*$ and bring to bear our calculations of the image of this map, from Proposition \ref{mspinc_image}. All we need to do is to determine, in our set of generators for $\Omega^{Spin^c}_*/(2,\beta)$ through degree $33$, a maximal set of linear combinations of products of generators which generate $\beta$-torsion elements in $\Omega^{Spin^c}_*/(2)$, i.e., copies of $H\mathbb{F}_2$ rather than $ku_{(2)}$ in the Anderson--Brown--Peterson splitting. 

As a consequence of the Anderson--Brown--Peterson splitting, generators of\linebreak $\Omega^{Spin^c}_*/(2,\beta)$ that are 2-torsion, and hence $\beta$-torsion, in $\Omega^{Spin^c}_*$ are those whose corresponding $A$-module primitive in $H^*(MSpin^c;\mathbb{F}_{2})$ is $\textit{not}$ $Q_0$ or $Q_1$ torsion. We identify such generators by re-running the entire process from the proof of Proposition \ref{mspinc_image}, but with the following modification: at the start of the calculation, before the induction on degree, we begin by letting $D$ be a list of {\em all Stiefel--Whitney monomials in $H^*(MSpin^c; \mathbb{F}_2)$ in degrees $\leq d$ which are $(Q_0,Q_1)$-torsion,} together with all words in the Steenrod squares applied to such $(Q_0,Q_1)$-torsion Stiefel--Whitney monomials\footnote{In principle, this step in the calculation could go wrong: see Remark \ref{remark on simplification in pf} for the issue, and why we know that in fact it does not cause a problem.}, instead of letting $D$ begin as the empty set. Consequently, as we proceed through the induction, $D$ is not only the set of $A$-module decomposables, but also the set of Stiefel--Whitney monomials which generate copies of $A//E(1)$. 

Re-running our inductive calculation from Proposition \ref{mspinc_image}, but with this initial list for $D$, yields a set of $A$-module generators for $H^*(MSpin^c;\mathbb{F}_2)$ {\em modulo $(Q_0,Q_1)$-torsion}. Comparison of the lists produced by the first calculation and the second calculation, then using the translation matrix to translate back from the basis of Stiefel--Whitney monomials to the dual basis of partitions (i.e., Thom's basis for $\Omega^O_*$), 
 gives us a set of generators for $\im \left( \Omega^{Spin^c}_* \rightarrow \Omega^O_*\right)$ in degrees $0, 1, \dots ,n$, and tells us, for each generator, whether it corresponds to a copy of $ku_{(2)}$ or of $H\mathbb{F}_2$ under the Anderson--Brown--Peterson splitting.

In degrees $\leq 33$, we find that an element $Y_{\lambda}\in \Omega^O_*$ which is in the image of the map $\Omega^{Spin^c}_*\rightarrow \Omega^O_*$ is $2$-torsion as long as the partition $\lambda$ includes an odd number. As described in \cref{spinc cobordism intro subsection}, the $\beta$-torsion elements of $\Omega^{Spin^c}_{*}$ are exactly the $2$-torsion elements. This yields the presentation for $\Omega^{Spin^c}_*$ in degrees $\leq 33$ in the statement of the theorem. 
\end{proof}
\begin{remark}\label{remark on simplification in pf}
We marked one step in the proof of Theorem \ref{mspinc_subring in main text} with a footnote. In principle, that step in the calculation could go wrong, failing to identify all the $(Q_0,Q_1)$-torsion in $H^*(MSpin^c; \mathbb{F}_2)$, as follows: suppose there is some $\mathbb{F}_2$-linear combination of Stiefel--Whitney monomials in $H^*(MSpin^c;\mathbb{F}_2)$ which is $(Q_0,Q_1)$-torsion, but {\em none of its summands are $(Q_0,Q_1)$-torsion}. If this occurs, it would {\em not} be noticed by the method we describe, since our method only checks the $(Q_0,Q_1)$-torsion status of Stiefel--Whitney monomials. 

We handle this by a very simple idea: we make the calculation as described, and after making the full calculation, we ``check our answer'' by comparing to the known {\em additive} structure of $\Omega^{Spin^c}_*$, as follows. After running our method, we compare the rank of our calculated $(Q_0,Q_1)$-torsion in each degree to the expected rank, using the known Poincar\'{e} series for the $2$-torsion in $\pi_*(MSpin^c)$. If our method has failed to notice a {\em linear combination} of Stiefel--Whitney monomials which was $(Q_0,Q_1)$-torsion despite its summands not being $(Q_0,Q_1)$-torsion, then the rank of the $(Q_0,Q_1)$-torsion from our calculation will be too small.

We have never observed this mismatched rank to happen, i.e., it does not happen through degree $33$ in $MSpin^c_*$. If the rank mismatch were to ever occur, the fix is conceptually trivial, but computationally very hard: instead of populating $D$ with all the homogeneous $(Q_0,Q_1)$-torsion Stiefel--Whitney monomials at the start of the torsion calculation, we simply populate $D$ with the all the $(Q_0,Q_1)$-torsion Stiefel--Whitney {\em polynomials} at the start of the calculation, then re-run the calculation. This of course cannot miss any $(Q_0,Q_1)$-torsion in $H^*(MSpin^c;\mathbb{F}_2)$! Its disadvantage is simply that it is extremely computationally expensive, since the total number of homogeneous Stiefel--Whitney {\em polynomials} (not just monomials) in degrees $\leq d$ in $H^*(MSpin^c;\mathbb{F}_2)$ grows extremely quickly as $d$ grows. We carry out the calculations in the way we describe---i.e., initially populating $D$ by only the $(Q_0,Q_1)$-torsion Stiefel--Whitney {\em monomials}, rather than {\em polynomials}---to dramatically speed up calculation, and because we are able to check that the resulting answer in the end agrees with the answer we would have gotten with the much slower calculation using {\em all} the homogeneous Stiefel--Whitney polynomials.
\end{remark}

\begin{comment}
\begin{algorithm}
\caption{An algorithm with caption}\label{alg:two}
%%\KwData{$n \geq 0$}
%%\KwResult{$y = x^n$}
decomposables := list() \\
\For{0 \leq n \leq 28}{
   \For{$i \in I$}{
    \For{$n < j < 28-n$}{
      Append(decomposables,$\text{Sq}^{j}b_{n,i}$)
    
    }
    }
  \eIf{edge is steepest}{
    \eIf{segmentation change}{
    update steepest array\; 
    update segmentation array;
    }{
    update steepest array\;
    }
  }{
   continue\;
  }
}
\For{lower vertex in new steepest edge}{
    Union-Find to update segmentation;
    }
\end{algorithm}
\end{comment}
Our next result determines explicit manifolds that represent some of the bordism classes whose powers occurs as ring-theoretic generators of $\Omega^{Spin^c}_*\otimes_{\mathbb{Z}}\mathbb{F}_2$ and of $\Omega^{Spin}_*\otimes_{\mathbb{Z}}\mathbb{F}_2$ in \Cref{mspinc_image} and in \Cref{mspin_image}, respectively. The following table was calculated through degree $6$ by Thom \cite{MR0061823}. We extend the calculation through degree $17$. The symbol $D_i$ denotes the $i$-dimensional Dold manifold, defined in \cite{MR0079269}.
\begin{proposition}[Thom \cite{MR0061823}]
\label{thom_manifolds}
   Manifold representatives for elements $Y_{n}$ in Thom's partition basis for $\Omega^O_*$ are as follows:
   \begin{center}
\begin{tabular}{||c|c|}
\hline
 Element & Manifold Representative  \\
 \hline
$Y_{2}$ & $\mathbb{R}P^{2}$  \\ 
$Y_{4}$ & $\mathbb{R}P^{4} \sqcup \mathbb{R}P^2 \times \mathbb{R}P^2$ \\ 
$Y_{5}$ & $D_{5}$ \\
$Y_{6}$ & $\mathbb{R}P^6$ \\
$Y_{8}$ & $\mathbb{R}P^8\sqcup (\mathbb{R}P^4)^2 \sqcup \mathbb{R}P^4 \times (\mathbb{R}P^2)^2 \sqcup (\mathbb{R}P^2)^4$ \\
$Y_{9}$ & $D_{9} \sqcup D_{5} \times \mathbb{R}P^4 \sqcup D_{5} \times (\mathbb{R}P^2)^2$ \\
$Y_{10}$ & $\mathbb{R}P^{10} \sqcup (\mathbb{R}P^2)^5$ \\
$Y_{11}$ & $\mathbb{D}_{11} \sqcup \mathbb{D}_{9} \times \mathbb{R}P^{2}$ \\
$Y_{12}$ & $\mathbb{R}P^{12} \sqcup (\mathbb{R}P^6)^2 \sqcup \mathbb{R}P^8 \times (\mathbb{R}P^2)^2 \sqcup (D_{5}^{2} \times \mathbb{R}P^2 \sqcup (\mathbb{R}P^4)^3$ \\
$Y_{13}$ & $D_{13} \sqcup D_{11} \times \mathbb{R}P^2 \sqcup D_{9} \times \mathbb{R}P^4 \sqcup D_{8} \times \mathbb{R}P^5 \sqcup D_{5} \times \mathbb{R}P^4 \times (\mathbb{R}P^2)^2$ \\
$Y_{14}$ & $\mathbb{R}P^{14}$ \\ 
& \\ 
$Y_{16}$ & $\mathbb{R}P^{16} \sqcup \mathbb{R}P^{12}\times (\mathbb{R}P^{2})^{2} \sqcup (\mathbb{R}P^{8})^{2} \sqcup \mathbb{R}P^{8}\times (\mathbb{R}P^{4})^{2}$ \\
 & $\sqcup \mathbb{R}P^{8} \times (\mathbb{R}P^{2})^{4}\sqcup(\mathbb{R}P^{6})^{2}\times \mathbb{R}P^{4} \sqcup \mathbb{R}P^{6}\times (D_{5})^{2}$ \\
 & $ \sqcup(D_{5})^{2}\times (\mathbb{R}P^{2})^{3} \sqcup (\mathbb{R}P^{4})^{4} \sqcup (\mathbb{R}P^{4})^{2} \times (\mathbb{R}P^{2})^{4} \sqcup (\mathbb{R}P^{2})^{8}$ \\
    & \\
    $Y_{17}$ & $D_{17} \sqcup D_{13}\times \mathbb{R}P^{4} \sqcup D_{13}\times (\mathbb{R}P^{2})^{2} \sqcup \mathbb{R}P^{12}\times D_{5}\sqcup D_{11}\times \mathbb{R}P^{6} $ \\
  & $\sqcup D_{11} \times \mathbb{R}P^{4}\times \mathbb{R}P^{2} \sqcup D_{11} \times (\mathbb{R}P^{2})^{3}\sqcup D_{9}\times \mathbb{R}P^{8} \sqcup D_{9} \times \mathbb{R}P^{6} \times \mathbb{R}P^{2}$\\ 
 & $\sqcup \mathbb{R}P^{8}\times D_{5} \times (\mathbb{R}P^{2})^{2} \sqcup (\mathbb{R}P^{6})^{2}\times D_{5}\sqcup (D_{5})^{3}\times \mathbb{R}P^{2} $ \\
       & $\sqcup D_{5}\times (\mathbb{R}P^{4})^{2} \times (\mathbb{R}P^{2})^{2}\sqcup D_{5} \times \mathbb{R}P^{4} \times (\mathbb{R}P^{2})^{4} \sqcup D_{5}\times (\mathbb{R}P^{2})^{6} $\\
\hline
\end{tabular}
\end{center}
\end{proposition}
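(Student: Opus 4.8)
The plan is to pin down Thom's generator $Y_{(n)}$ by a single Stiefel--Whitney number, and then correct an obvious ``seed'' manifold by decomposable terms, proceeding by induction on $n$.

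First I would recall from \cref{mo and combinatorics review} that $Y_{(n)}$ is, by construction, dual to $P_{(n)}(w_1,\dots,w_n)U\in H^*(MO;\mathbb{F}_2)$, and that $P_{(n)}$ is the polynomial carrying the $n$-th power sum $X_1^n+\dots+X_n^n$ to the elementary symmetric polynomials; equivalently $P_{(n)}(w_\bullet)=s_n$ is the $n$-th Newton (``$s$-'') class in the Stiefel--Whitney classes. Hence for a closed smooth $n$-manifold $M$, writing $[M]=\sum_{\mu}\langle P_\mu(w(M)),[M]\rangle\,Y_\mu$ in Thom's basis (the sum over non-dyadic partitions $\mu$ of $n$), the coefficient of the length-one generator $Y_{(n)}$ is exactly the $s$-number $s_n[M]\in\mathbb{F}_2$, and the coefficients of the longer $Y_\mu$ are the remaining Stiefel--Whitney numbers of $M$. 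So $M$ represents $Y_{(n)}$ on the nose precisely when $s_n[M]=1$ while $\langle P_\mu(w(M)),[M]\rangle=0$ for every non-dyadic $\mu\vdash n$ of length $\geq 2$; equivalently, verifying the table amounts to computing all Stiefel--Whitney numbers of each displayed disjoint union.

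Next I would fix the seed manifold in each degree. For $n$ even, $w(T\mathbb{R}P^n)=(1+a)^{n+1}$ is a product of line-bundle classes, so $s_n(T\mathbb{R}P^n)=(n+1)a^n$ and $s_n[\mathbb{R}P^n]=n+1\equiv 1\pmod 2$; thus $\mathbb{R}P^n$ is a valid seed. For $n$ odd and non-dyadic, $\mathbb{R}P^n$ bounds and $s_n[\mathbb{R}P^n]=0$, so I would instead take the $n$-dimensional Dold manifold $D_n$, for which Dold \cite{MR0079269} shows $s_n[D_n]=1$ exactly in the non-dyadic odd degrees. (The degrees $n=1,3,7,15$ are dyadic, so there is no generator $Y_{(n)}$, which is why those rows are blank or absent.) The seed $M_0$ then represents $Y_{(n)}+(\text{decomposable})$; writing $[M_0]=Y_{(n)}+\sum_{|\mu|\geq 2}c_\mu Y_\mu$, each monomial $Y_\mu=Y_{(\mu_1)}\cdots Y_{(\mu_k)}$ has all $\mu_i<n$, so by the inductive hypothesis there are explicit representatives $N_{\mu_i}$, and $N_\mu:=N_{\mu_1}\times\cdots\times N_{\mu_k}$ represents $Y_\mu$. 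Then $M:=M_0\sqcup\coprod_{c_\mu=1}N_\mu$ represents $Y_{(n)}$ exactly. Running this for $n=2,4,5,6,8,\dots,16,17$ in order reproduces the disjoint unions in the table; Thom \cite{MR0061823} carried this out through degree $6$, and the extension to degree $17$ is a finite computation of the same kind, which I would verify with the software accompanying Proposition~\ref{mspinc_image}.

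The main obstacle is the bookkeeping of the correction terms, which requires, for each $n$: computing the total Stiefel--Whitney class of the seed (routine for $\mathbb{R}P^n$, but for the Dold manifolds one must use their tangent bundles, built from $S^m$, $\mathbb{C}P^{(n-m)/2}$, and the defining free involution); evaluating every Stiefel--Whitney number $\langle P_\mu(w),[M_0]\rangle$, i.e.\ applying the Kostka-type transition matrix of \cref{mo and combinatorics review} to move between the partition basis and the monomial basis in the $w$'s and then integrating over $M_0$; and matching the outcome against the inductively known representatives. No step is conceptually difficult, but the Dold-manifold characteristic-class computations in the odd degrees---especially $n=9,11,13,17$, where the corrections involve products of Dold manifolds with projective spaces---are where arithmetic errors are easiest to make, which is why the computer check is what makes the result trustworthy.
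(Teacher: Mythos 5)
Your approach is essentially the paper's own: the paper's proof is just the one-line remark that this is a ``routine calculation using Stiefel--Whitney numbers,'' and your proposal correctly spells out that calculation --- recognize $Y_{(n)}$ as dual to the Newton class $P_{(n)}(w)U = s_n U$, seed with $\mathbb{R}P^n$ when $n$ is even (where $s_n[\mathbb{R}P^n]=n+1\equiv 1 \bmod 2$) or the Dold manifold $D_n$ when $n$ is odd and non-dyadic, and inductively cancel the decomposable error terms by pairing against $P_\mu(w)U$ for the longer non-dyadic partitions $\mu$ of $n$. The duality framing and the inductive correction scheme are correct, and the bookkeeping you flag around Dold-manifold characteristic classes and the Kostka-type change of basis is indeed where the actual labor (and error risk) lies.
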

\begin{proof}
%Recall that the unoriented bordism class of a manifold is determined completely by its Stiefel--Whitney numbers. The generators $Y_{n} \in MO_{*}$, where $n$ is not one less than a power of two, are the duals of generators $P_{n}U \in H^*(MO; \mathbb{F}_{2})$. Since $MO_{*}$ is a subset of $ H_{*}(MO; \mathbb{F}_{2})$, an element $Y_{n}$ is a function $H^*(MO; \mathbb{F}_{2}) \longrightarrow \mathbb{F}_{2}$. The value of the Stiefel--Whitney number $w_{1}^{\alpha_{1}}w_{2}^{\alpha_{2}}\cdots w_{n}^{\alpha_{n}}$, where $\alpha_{1} +\cdots + \alpha_{n} = n$, of a manifold representative of $Y_{n}$ is then determined by the value of the function $Y_{n}$ evaluated at $w_{1}^{\alpha_{1}}w_{2}^{\alpha_{2}}\cdots w_{n}^{\alpha_{n}}U$. Therefore, we would like to fully describe the function $Y_{n}$. $\text{Hom}_{A}(H^{*}(MO; \mathbb{F}_{2}), \mathbb{F}_{2})$ consists of functions that annihilate $A$-module decomposables, so $Y_{n}$ will vanish on all $A$-module decomposables. By properties of taking a dual basis, $Y_{n}$ evaluated at $P_{n}U$ will equal one. Furthermore, $Y_n$ will vanish on $P_{\lambda}U$ for all non-dyadic partitions $\lambda$ of $n$, excluding $\lambda = (n)$ itself. 
Routine calculation using Stiefel--Whitney numbers, since the unoriented cobordism class of a manifold is determined completely by its list of Stiefel--Whitney numbers. For the total Stiefel--Whitney classes of real projective spaces and of Dold manifolds, see \cite[section 4 example 4]{MR0440554} and \cite[Satz 2]{MR0079269}, respectively. Once one has the total Stiefel--Whitney class of an $n$-dimensional manifold, one pairs with the top class to get the Stiefel--Whitney numbers, one such number for each homogeneous polynomial of degree $n$ in the Stiefel--Whitney classes $w_1, \dots, w_n$. 
\end{proof}

In \cite{MR0180977}, Milnor investigates whether every spin manifold is unorientedly cobordant to the square of an orientable manifold. He shows it is true for spin manifolds of dimension $\leq 23$. The ambiguity in dimension $24$ stems from the existence of an orientable manifold whose only nonzero Stiefel--Whitney numbers are $w_4w_6w_7^2$, $w_6^4$, $w_4^6$, $w_4^3w_6^2$, and $w_4^2w_8^2$. Milnor then poses the problem of whether a spin manifold of dimension $24$ exists with these nonzero Stiefel--Whitney numbers. Anderson--Brown--Peterson stated two years later \cite{MR0219077} that, as a corollary of their main theorem, the lowest dimension in which there exists an element of $\text{Im}(\Omega^{Spin}_{*} \rightarrow \Omega^O_*)$ which is not the square of an orientable manifold is $24$ \cite{MR0219077}. In \Cref{mspin_image} we calculated $\Omega^{Spin}_{*}$ in degrees through $31$ in terms of Thom's partition basis, and in \cref{thom_manifolds} we have manifold representatives for ring-theoretic generators which suffice to generate everything in $\Omega^{Spin}_{24}$. Solving for an element of $\Omega^{Spin}_{24}$ with Milnor's prescribed Stiefel--Whitney numbers, we find an explicit manifold of the kind Milnor asked for:
\begin{theorem}\label{milnor mfld thm}
    The cobordism class $T_{24} + Y_{12}^{2}+Y_{10}^{2}Y_{2}^{2}+Y_{8}^{2}Y_{4}^{2}+Y_{8}^2Y_{2}^4+Y_{6}^{2}Y_{4}^{2}Y_{2}^{2} + Y_{5}^{4}Y_{2}^{2} +Y_4^{6} \in \text{Im}(\Omega^{Spin}_{*} \rightarrow \Omega^O_*)$ has nonzero Stiefel--Whitney numbers $w_4w_6w_7^2$, $w_6^4$, $w_4^6$, $w_4^3w_6^2$, and $w_4^2w_8^2$, and is represented by the manifold:   
    \begin{align*}
    &    (\mathbb{R}P^2)^6\times(\mathbb{R}P^6)^2 \sqcup (\mathbb{R}P^4)^6 \sqcup \mathbb{R}P^2\times(\mathbb{R}P^4)^3\times(D^5)^2 \sqcup (\mathbb{R}P^2)^2\times(\mathbb{R}P^4)^2\times(\mathbb{R}P^6)^2  \\
 &\ \ \  \sqcup (\mathbb{R}P^2)^4\times(\mathbb{R}P^8)^2 \sqcup  (\mathbb{R}P^2)^3\times\mathbb{R}P^4\times D^5\times(D^9) \sqcup (\mathbb{R}P^4)^2\times(D^5)^2\times(\mathbb{R}P^6) \\
 &\ \ \  \sqcup (\mathbb{R}P^2)^2\times D^5\times\mathbb{R}P^6\times(D^9) \sqcup (\mathbb{R}P^6)^4 \sqcup (D^5)^2\times\mathbb{R}P^6\times(\mathbb{R}P^8) \\
 &\ \ \  \sqcup (\mathbb{R}P^4)^2\times(\mathbb{R}P^8)^2 \sqcup (D^5)^3\times(D^9) \sqcup \mathbb{R}P^4\times(D^5)^2\times(\mathbb{R}P^{10}) 
 \\&\ \ \  \sqcup (\mathbb{R}P^2)^2\times(\mathbb{R}P^{10})^2 \sqcup (\mathbb{R}P^4)^2\times D^5\times(D^{11})\sqcup (\mathbb{R}P^2)^2\times D^9\times(D^{11}) 
    \\&\ \ \  \sqcup \mathbb{R}P^2\times(D^5)^2\times(\mathbb{R}P^{12}) \sqcup \mathbb{R}P^2\times\mathbb{R}P^4\times D^5\times(D^{13}) \sqcup D^5\times\mathbb{R}P^6\times(D^{13})  \\
  &\ \ \ \sqcup (D^5)^2\times(\mathbb{R}P^{14}) \sqcup (\mathbb{R}P^{12})^2 \sqcup (D^{11})\times(D^{13})\sqcup(\mathbb{R}P^2)^6\times(\mathbb{R}P^6)^2\sqcup (\mathbb{R}P^4)^6 \\
  &\ \ \ \sqcup  (\mathbb{R}P^2)^2\times(\mathbb{R}P^4)^2\times(\mathbb{R}P^6)^2 \sqcup  (\mathbb{R}P^2)^4\times(\mathbb{R}P^8)^2\sqcup (\mathbb{R}P^6)^4  \\
  &\ \ \  \sqcup (\mathbb{R}P^4)^2\times(\mathbb{R}P^8)^2 \sqcup (\mathbb{R}P^2)^2\times(\mathbb{R}P^{10})^2 \sqcup (\mathbb{R}P^{12})^2.
    \end{align*}
\end{theorem}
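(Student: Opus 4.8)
The plan is to read the theorem as three separate assertions: (a) that the displayed cobordism class lies in $\text{Im}(\Omega^{Spin}_*\to\mathfrak{N}_*)$; (b) that its Stiefel--Whitney numbers are exactly $w_4w_6w_7^2$, $w_6^4$, $w_4^6$, $w_4^3w_6^2$, and $w_4^2w_8^2$, all other degree-$24$ numbers vanishing; and (c) that it is represented by the displayed disjoint union of products. Assertion (a) needs nothing new: the class $T_{24}+Y_{12}^2+Y_{10}^2Y_2^2+Y_8^2Y_4^2+Y_8^2Y_2^4+Y_6^2Y_4^2Y_2^2+Y_5^4Y_2^2+Y_4^6$ appears verbatim as one of the listed ring generators of $\text{Im}(\Omega^{Spin}_*\to\mathfrak{N}_*)$ in Proposition~\ref{mspin_image}.

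For (b), I would work inside the subspace $\text{Im}(\Omega^{Spin}_{24}\to\mathfrak{N}_{24})\subseteq\mathfrak{N}_{24}$, for which Proposition~\ref{mspin_image} supplies an explicit $\mathbb{F}_2$-basis of monomials in Thom's generators $Y_n$. Using the inverse-Kostka transition matrix from the proof of Proposition~\ref{mspinc_image}, rewrite each basis monomial as an explicit polynomial in the Stiefel--Whitney classes $w_1,w_2,\dots\in H^*(BO;\mathbb{F}_2)$, and read off its Stiefel--Whitney numbers one partition of $24$ at a time. By Thom~\cite{MR0061823}, Stiefel--Whitney numbers are a complete invariant on $\mathfrak{N}_*$, so ``bordism class $\mapsto$ tuple of Stiefel--Whitney numbers'' is injective on $\mathfrak{N}_{24}$; hence there is at most one element of $\text{Im}(\Omega^{Spin}_{24}\to\mathfrak{N}_{24})$ whose Stiefel--Whitney-number tuple is the indicator vector of $\{w_4w_6w_7^2,w_6^4,w_4^6,w_4^3w_6^2,w_4^2w_8^2\}$. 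Solving this $\mathbb{F}_2$-linear system --- which is consistent, either by the Anderson--Brown--Peterson existence theorem or simply by inspection of the output --- produces the displayed class, and the same computation confirms that all remaining degree-$24$ Stiefel--Whitney numbers of it are zero.

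For (c), I would substitute into the class the manifold representatives of Proposition~\ref{thom_manifolds} for each $Y_n$ that occurs, namely $n\in\{2,4,5,6,8,9,10,11,12,13,14\}$ --- exactly the generators appearing in \eqref{t24 eq} and in the correction terms $Y_{12}^2,\dots,Y_4^6$, and exactly the range tabulated in Proposition~\ref{thom_manifolds}. Each $Y_n$ is a disjoint union of products of real projective spaces and Dold manifolds, and Cartesian product distributes over disjoint union, so every monomial $Y_{i_1}\cdots Y_{i_k}$ of the class expands into a disjoint union of products of $\mathbb{R}P$'s and Dold manifolds. Taking the disjoint union of all these over the monomials of the class, and using $[M\sqcup M]=0$ in $\mathfrak{N}_*$ to discard repeated components as needed, produces the manifold in the statement. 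As a built-in check one then recomputes its Stiefel--Whitney numbers directly from the Whitney product formula and the standard total Stiefel--Whitney classes of $\mathbb{R}P^m$ and of the Dold manifolds $D_i$, re-confirming the five Milnor numbers and the vanishing of all other degree-$24$ numbers; this independently re-verifies (b).

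The conceptual content is light, resting only on Propositions~\ref{mspin_image} and~\ref{thom_manifolds} together with Thom's faithfulness of Stiefel--Whitney numbers. The main obstacle is the volume of bookkeeping: expanding $T_{24}$ (twenty-five monomials) and the correction terms into disjoint unions of products, handling the degree-$24$ transition matrix, evaluating on the order of $p(24)$ Stiefel--Whitney numbers, and performing the mod-$2$ cancellation of duplicated components without slips. This is why the computation is delegated to the Magma program referenced in the proof of Proposition~\ref{mspinc_image}; by hand it is tedious but entirely routine. One subtlety should be flagged in the write-up: the displayed manifold is assembled from real projective spaces and Dold manifolds, none of which need be spin, so ``is represented by'' must be read as ``is unorientedly cobordant to''. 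The existence of a genuine compact spin $24$-manifold in this bordism class is the Anderson--Brown--Peterson input \cite{MR0190939},\cite{MR0219077}; what this theorem adds is an explicit unoriented representative of the bordism class of such a manifold, turning Milnor's implicit example into a concrete one.
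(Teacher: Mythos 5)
Your proposal follows essentially the same route the paper does. The paper does not supply a formal proof block for Theorem~\ref{milnor mfld thm}; the argument is carried in the preceding paragraph, which describes exactly your three steps: read membership in $\text{Im}(\Omega^{Spin}_*\to\mathfrak{N}_*)$ directly off the generating set in Proposition~\ref{mspin_image}, pin down the class by its Stiefel--Whitney numbers using Thom's faithfulness of characteristic numbers on $\mathfrak{N}_*$, and then substitute the explicit representatives from Proposition~\ref{thom_manifolds} and expand via distributivity of Cartesian product over disjoint union. Your caveat that ``is represented by'' must mean ``unorientedly cobordant to'' (since the listed factors are real projective spaces and Dold manifolds, not themselves spin) is a genuinely useful clarification that the paper leaves implicit; the explicit spin manifold is supplied only by the Anderson--Brown--Peterson existence result, and what the theorem makes concrete is its unoriented bordism class. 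The only thing I would add is that the uniqueness part of your step (b) follows immediately from Thom's theorem (characteristic numbers are a complete invariant on $\mathfrak{N}_*$), so once the linear system on $\text{Im}(\Omega^{Spin}_{24}\to\mathfrak{N}_{24})$ is consistent the solution is automatically unique; you already gesture at this, but it is worth saying crisply. In short, this is the paper's argument, correctly reconstructed.
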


\section{The spin${}^c$-cobordism ring in all degrees.}

\subsection{A nonunital subring of the $2$-torsion in the spin${}^c$-cobordism ring.}

In Theorem \ref{mspinc_subring in main text}, we showed that, in degrees $\leq 33$, the cobordism classes $Y_{(5,5)},Y_{(9,9)}, Y_{(11,11)},$ and $Y_{(13,13)}\in \Omega^O_*$ lift to indecomposable $2$-torsion elements $Z_{10},Z_{18},Z_{22},$ and $Z_{26}$ in $\Omega^{Spin^c}_*$. The elements $Y_{(5,5)},Y_{(9,9)}, \dots$ are precisely those of the form $Y_{i}^2\in \Omega^O_*$ with $i$ odd and non-dyadic. It is natural to ask whether this pattern extends above degree $33$ as well. In \Cref{hassans conj}, we show that something of this kind is true, if we use the Dold manifold $D_i$, from \cite{MR0079269}, rather than the cobordism class $Y_i$ from Thom's partition basis for $\Omega^O_*$. In low degrees, the algebraic relation between the Dold manifolds and the Thom generators for $\Omega^O_*$ is as follows:
\begin{proposition}
\label{spin_manifold_reps}
The squares of odd-dimensional Dold manifolds represent the following polynomials in Thom's generators $Y_2,Y_4, Y_5, Y_6, Y_8, \dots$ for $\Omega^O_*$:
   \begin{center}
\begin{tabular}{||c|c|}
\hline
 Element & Manifold Representative  \\
 \hline
$Y_5^2$ & $D_{5}^{2}$  \\ 
$Y_9^2 + Y_5^2 Y_4^2$ & $D_{9}^{2}$ \\ 
$Y_{11}^2 + Y_9^2 Y_2^2 + Y_5^2 Y_4^2 Y_2^2$ & $D_{11}^{2}$ \\
$Y_{13}^2 + Y_{11}^2 Y_2^2 + Y_9^2 Y_4^2 + Y_8^2 Y_5^2$ & $D_{13}^{2}$ \\ \hline
\end{tabular}
\end{center}
Furthermore, each of these elements of $\Omega^O_*$ is in the image of the map $\Omega^{Spin}_*\rightarrow\Omega^O_*$. 
\end{proposition}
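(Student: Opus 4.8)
The statement has two parts: to identify the unoriented bordism class $[D_i\times D_i]=[D_i]^2\in\mathfrak{N}_*$ for $i\in\{5,9,11,13\}$, and to check that each of these classes lifts to $\Omega^{Spin}_*$. The first move I would make is to reduce the bordism-class computation from dimension $2i$ down to dimension $i$ using Frobenius. Since $\mathfrak{N}_*\cong\mathbb{F}_2[Y_2,Y_4,Y_5,\dots]$ is a reduced integral domain, the squaring map is injective on it; moreover each of the four displayed polynomials is a perfect square, with square root $Y_5$, $Y_9+Y_5Y_4$, $Y_{11}+Y_9Y_2+Y_5Y_4Y_2$, and $Y_{13}+Y_{11}Y_2+Y_9Y_4+Y_8Y_5$ respectively (in characteristic $2$, $(a+b)^2=a^2+b^2$, and every monomial occurring has all its exponents even). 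Hence $[D_i]^2$ equals the displayed polynomial if and only if
\begin{align*}
[D_5]&=Y_5, & [D_9]&=Y_9+Y_4Y_5, \\
[D_{11}]&=Y_{11}+Y_2Y_9+Y_2Y_4Y_5, & [D_{13}]&=Y_{13}+Y_2Y_{11}+Y_4Y_9+Y_5Y_8,
\end{align*}
in $\mathfrak{N}_5,\mathfrak{N}_9,\mathfrak{N}_{11},\mathfrak{N}_{13}$ respectively. So the plan is to prove these four identities and then square them.

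To prove the identities I would compare Stiefel--Whitney numbers, using the fact (Thom) that a class in $\mathfrak{N}_n$ is determined by its pairings against the degree-$n$ monomials in the universal Stiefel--Whitney classes $w_1,w_2,\dots$. For the left-hand sides I would invoke Dold's description of $D_i$ from \cite{MR0079269}: the mod $2$ cohomology ring is $\mathbb{F}_2[c,d]/(c^2,d^{(i+1)/2})$ with $|c|=1$ and $|d|=2$, and the total Stiefel--Whitney class is $(1+c)(1+c+d)^{(i+1)/2}$; expanding this modulo the truncation relations yields all Stiefel--Whitney numbers of $D_i$. For the right-hand sides I would use the Whitney product formula together with the manifold representatives for $Y_2,Y_4,Y_5,Y_8,Y_9,Y_{11},Y_{13}$ recorded in \Cref{thom_manifolds} (noting that odd-dimensional real projective spaces are null-bordant, so such factors drop out), or equivalently pass to $H^*(MO;\mathbb{F}_2)$ and translate between Thom's partition basis and the Stiefel--Whitney monomial basis by the same transition matrices used in the proof of \Cref{mspinc_image}. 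Matching the two finite tables of Stiefel--Whitney numbers in each of the degrees $5,9,11,13$ gives the identities, and the data of \Cref{thom_manifolds} provides an independent cross-check in these degrees.

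For the lifting statement, I would observe that the four polynomials $Y_5^2$, $Y_9^2+Y_5^2Y_4^2$, $Y_{11}^2+Y_9^2Y_2^2+Y_5^2Y_4^2Y_2^2$, and $Y_{13}^2+Y_{11}^2Y_2^2+Y_9^2Y_4^2+Y_8^2Y_5^2$ occur verbatim among the ring generators of $\im(\Omega^{Spin}_*\to\mathfrak{N}_*)$ determined in \Cref{mspin_image}, and that their degrees $10,18,22,26$ all lie in the range $\leq 31$ to which that proposition applies; hence each $[D_i\times D_i]$ lies in the image of $\Omega^{Spin}_*\to\Omega^O_*$. (In dimension $\leq 23$, i.e., for $D_5^2$, $D_9^2$, and $D_{11}^2$, one could alternatively invoke Milnor's criterion \cite{MR0180977}, that being unorientedly cobordant to a spin manifold is equivalent to the vanishing of all Stiefel--Whitney numbers involving $w_1$ and $w_2$, which is checkable from the computation above; but $D_{13}\times D_{13}$ is $26$-dimensional, so \Cref{mspin_image} is needed in that case regardless.)

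The main difficulty, such as it is, lies not in any conceptual point but in the bookkeeping of Stiefel--Whitney numbers in low degrees. The steps most prone to error are transcribing Dold's total Stiefel--Whitney class and its truncation relations for $D_i$ correctly, enumerating all the degree-$i$ Stiefel--Whitney monomials, and computing Stiefel--Whitney numbers of the products (such as $D_9\times\mathbb{R}P^4$, $D_{11}\times\mathbb{R}P^2$, and $D_5\times\mathbb{R}P^8$) that arise on the right-hand sides. Since everything happens in degrees $\leq 13$, this is entirely feasible by hand, and is in any case within the scope of the computer implementation already used for \Cref{mspinc_image}.
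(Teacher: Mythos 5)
Your overall strategy is sound: the observation that each displayed class is a perfect square in $\mathfrak{N}_*\cong\mathbb{F}_2[Y_2,Y_4,Y_5,\dots]$, together with injectivity of Frobenius on that polynomial ring, correctly reduces the problem to identifying $[D_i]$ itself as a polynomial in the $Y_j$ in degree $i$; and your handling of the lifting claim, observing that the four classes occur verbatim among the generators exhibited in \Cref{mspin_image} in degrees $\le 31$, is exactly what the paper does. However, your proposed computation of the left-hand side contains a concrete error. Dold's odd-degree generator is $D_i = P(2^r-1,\,s2^r)$ with $i+1 = 2^r(2s+1)$; for $i=11$ this reads $12 = 2^2\cdot 3$, so $r=2$, $s=1$, and $D_{11} = P(3,4)$, whose cohomology is $\mathbb{F}_2[c,d]/(c^4,d^5)$ and whose total Stiefel--Whitney class is $(1+c)^3(1+c+d)^5$. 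The formulas you wrote, $\mathbb{F}_2[c,d]/(c^2,d^{(i+1)/2})$ and $w=(1+c)(1+c+d)^{(i+1)/2}$, are valid only when $(i+1)/2$ is odd (i.e.\ $r=1$); they hold for $i=5,9,13$ but for $i=11$ they describe $P(1,5)$, which is not of Dold's form and is not $D_{11}$. Running the Stiefel--Whitney comparison with the wrong model for $D_{11}$ would produce a wrong element of $\mathfrak{N}_{11}$, so as written this step of your proposal fails in that case.

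Your cross-check against \Cref{thom_manifolds} would in fact catch this, but only because that table is what the paper's proof actually relies on. Rather than recomputing Stiefel--Whitney numbers of Dold manifolds from scratch, the paper inverts the $Y_i$ rows of \Cref{thom_manifolds} to express each $[D_i]$ in Thom's generators (for instance, $Y_9 = [D_9] + Y_5\,[\mathbb{R}P^4] + Y_5Y_2^2$ with $[\mathbb{R}P^4] = Y_4+Y_2^2$ gives $[D_9]=Y_9+Y_4Y_5$, and one proceeds inductively for $D_{11}$, $D_{13}$), then squares. That route is shorter and avoids needing the $P(m,n)$ parameters correct for each $i$ separately. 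If you prefer the direct Stiefel--Whitney route, use the general formulas $H^*(P(m,n);\mathbb{F}_2)\cong\mathbb{F}_2[c,d]/(c^{m+1},d^{n+1})$ and $w(P(m,n))=(1+c)^m(1+c+d)^{n+1}$, being careful that the value of $r$ in $D_i = P(2^r-1,s2^r)$ varies with $i$.
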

\begin{proof}
The manifold representatives are straightforwardly calculated from \Cref{thom_manifolds}. 
By \Cref{mspin_image}, these elements are in the image of the map $\Omega^{Spin}_*\rightarrow\Omega^O_*$. 
\end{proof}

\begin{proposition}\label{hassans conj}
For each odd integer $i$ such that $i+1$ is not a power of $2$, the Dold manifold $D_i$ has the property that $D_i\times D_i$ lifts to an indecomposable $(2,\beta)$-torsion element of $\Omega^{Spin^c}_{2i}$. It furthermore lifts to an indecomposable $2$-torsion element of $\Omega^{Spin}_{2i}$.
\end{proposition}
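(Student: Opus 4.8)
The plan is to combine a lower bound and an upper bound on the spin and spin${}^c$ cobordism of $D_i \times D_i$: we will show that $[D_i \times D_i]$ lifts to these cobordism rings (upper bound: it dies when we impose enough Stiefel--Whitney number vanishing), and simultaneously that it cannot be decomposed or killed by torsion considerations (lower bound: it remains nonzero and indecomposable modulo products and modulo the free part). The key structural input is the Anderson--Brown--Peterson splitting, which identifies the $2$-torsion in $\Omega^{Spin^c}_*$ with $\pi_*(Z)$, a sum of suspensions of $H\mathbb{F}_2$, together with the fact from \cref{prelims} that the map $\Omega^{Spin^c}_*/(2,\beta)\to \mathfrak{N}_*$ is injective, so membership in the image of this map is detected by checking that the relevant cohomology class in $H^*(MSpin^c;\mathbb{F}_2)$ is an $A$-module primitive.

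\emph{Step 1: $D_i\times D_i$ lifts to $\Omega^{Spin}_*$, hence to $\Omega^{Spin^c}_*$.} First I would recall the classical computation of the Stiefel--Whitney classes of the Dold manifold $D_i = (S^p\times \mathbb{C}P^q)/{\sim}$ from \cite{MR0079269}; in particular one knows $w(D_i)$ explicitly as a polynomial in the two generators $c\in H^1$ and $d\in H^2$ of $H^*(D_i;\mathbb{F}_2)$. The Stiefel--Whitney numbers of $D_i\times D_i$ are then products of those of $D_i$. To show $[D_i\times D_i]$ lifts to $\Omega^{Spin}_*$, I would use the fact (from the discussion in \cref{spinc cobordism intro subsection} and Milnor's work) that a class in $\mathfrak{N}_*$ lifts to $\Omega^{Spin}_*$ if and only if all its Stiefel--Whitney numbers involving $w_1$ or $w_2$ vanish. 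For a product $M\times M$, every monomial $w_{I}[M\times M]$ is a sum of terms $w_{I'}[M]\cdot w_{I''}[M]$ with $I = I'\sqcup I''$; since $w_1(D_i\times D_i)$ and $w_2(D_i\times D_i)$ are symmetric in the two factors, any such monomial involving $w_1$ or $w_2$ pairs up a factor with its ``mirror'' or has a matched $w_1$/$w_2$-free partner, and a parity/symmetry argument (the same one underlying the implication (2)$\Rightarrow$(3) in Milnor's list, restricted to squares) forces these to vanish mod $2$. This is essentially the observation already invoked in the text for $i\le 13$; I would state it as a lemma about squares and verify it works for all odd $i$.

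\emph{Step 2: indecomposability.} Having lifted $[D_i\times D_i]$ to $\Omega^{Spin^c}_{2i}$, I must show the lift can be chosen indecomposable, i.e., not a polynomial in lower-degree elements plus possibly a decomposable correction. The cleanest route is to work in $\Omega^{Spin^c}_*/(2,\beta)\hookrightarrow \mathfrak{N}_*$: by \Cref{mspinc_image}'s method, the indecomposables of $\Omega^{Spin^c}_*/(2,\beta)$ in each degree correspond to a choice of $A$-module primitives in $H^*(MSpin^c;\mathbb{F}_2)$ modulo $A$-decomposables. I would examine the image of $[D_i\times D_i]$ in $\mathfrak{N}_{2i}$ expressed in Thom's basis: by \Cref{spin_manifold_reps}-type computations, $[D_i]^2 = Y_i^2 + (\text{lower, in even generators squared})$, and the leading term $Y_i^2$ with $i$ odd non-dyadic is exactly the kind of class which, by the $(Q_0,Q_1)$-torsion analysis in the proof of \Cref{mspinc_subring in main text}, generates a copy of $H\mathbb{F}_2$, i.e., a $2$-torsion element. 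The indecomposability amounts to showing $Y_i^2$ is not a product of smaller elements of $\im(\Omega^{Spin^c}_*\to\mathfrak{N}_*)$; since every element of that image is (mod the $T$-corrections) a polynomial in the $Y_j^2$'s and the $T$'s, and $Y_i^2$ is a polynomial generator of $\mathfrak{N}_*^{(2)}:=\mathbb{F}_2[Y_j^2]$, it cannot be a nontrivial product there, and the $T$-elements are too sparse in low congruence classes to interfere --- more precisely, $2i \equiv 2 \bmod 4$ when $i$ is odd, and the $T_j$ all have $j$ in a short explicit list with $T_j^2$ never equal to a single $Y_i^2$.

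\emph{Step 3: nontriviality of the torsion.} Finally I would confirm the lifted element is genuinely $2$-torsion (equivalently $\beta$-torsion, by Anderson--Brown--Peterson) and nonzero: nonzero because its image in $\mathfrak{N}_{2i}$ is the nonzero polynomial $Y_i^2 + \cdots$, and $2$-torsion because the corresponding $A$-module primitive in $H^*(MSpin^c;\mathbb{F}_2)$ is not $(Q_0,Q_1)$-torsion (it generates an $A$ rather than an $A/\!/E(1)$, detected by the same $(Q_0,Q_1)$-computation as in the proof of \Cref{mspinc_subring in main text}, now done for the class dual to $Y_i^2$ with $i$ odd). The \textbf{main obstacle} is Step 3 extended to all odd non-dyadic $i$: checking $(Q_0,Q_1)$-acyclicity of the relevant primitive is a finite computation in each degree (as the text does through degree $33$), but proving it uniformly for \emph{all} odd $i$ with $i+1$ not a power of $2$ requires a structural argument about $H^*(MSpin^c;\mathbb{F}_2)$ as an $A$-module --- presumably leveraging the explicit form of $w(D_i)$ together with the Harada--Kono presentation \eqref{harada kono iso} to locate the class $P_{(i,i)}U$ and show it pairs nontrivially with $Q_0Q_1$-homology. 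I would expect this to hinge on the elementary fact that $D_i$ carries an odd-degree cohomology class $c$ with $c^2\ne 0$ (so it is ``not spin,'' only spin${}^c$ after squaring), which is precisely the phenomenon the $(Q_0,Q_1)$-torsion is measuring.
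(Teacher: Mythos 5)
Your proposal has a genuine gap, and it is exactly the one you flag as your ``main obstacle'' at the end of Step 3. You propose to show that the lift of $[D_i\times D_i]$ is $2$-torsion by analyzing the $(Q_0,Q_1)$-torsion status of the dual $A$-module primitive in $H^*(MSpin^c;\mathbb{F}_2)$ uniformly over all odd non-dyadic $i$, and you acknowledge you do not know how to run this computation in closed form. The paper sidesteps this entirely with a degree argument: since $i$ is odd, $2i\equiv 2\bmod 4$, and by the Anderson--Brown--Peterson splitting of $2$-local $MSpin$ into suspensions $\Sigma^{4n}ko$, $\Sigma^{4n}ko\langle 2\rangle$, and $\Sigma^m H\mathbb{F}_2$, together with the explicit structure of $ko_*$, \emph{every} element of $\Omega^{Spin}_n$ with $n\equiv 2\bmod 4$ is $2$-torsion. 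So once you have a lift to $\Omega^{Spin}_{2i}$, it is automatically $2$-torsion, and pushing forward to $\Omega^{Spin^c}_{2i}$ preserves this; $\beta$-torsion then follows from the Anderson--Brown--Peterson identification of the $2$-torsion with the $\beta$-torsion in $\Omega^{Spin^c}_*$. No per-$i$ cohomology computation is needed, and the obstacle you identify is not actually there.

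Your Step 1 is also underpowered. You are trying to re-prove, by a sketched ``parity/symmetry argument,'' the statement that all Stiefel--Whitney numbers of $D_i\times D_i$ involving $w_2$ vanish, but the argument as written (``pairs up a factor with its mirror or has a matched $w_1/w_2$-free partner'') does not constitute a proof; note that $w_2(D_i)=d\neq 0$, so this is not a triviality about each factor separately. The paper instead quotes two clean black boxes: (i) Milnor's theorem that $\Omega^U_*\to\mathfrak{N}_*$ surjects onto all squares, which factors through $\Omega^{Spin^c}_*$ and immediately gives the spin${}^c$ lift, and (ii) Peter G.~Anderson's theorem \cite{MR0169245} that the square of any orientable compact manifold is unorientedly cobordant to a spin manifold, combined with the easy check that $w_1(D_i)=0$, to get the spin lift. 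Your Step 2 (indecomposability) is in the right spirit but is more roundabout than the paper's: the paper just observes that $D_i$ itself has $w_2=d\neq 0$ hence does not lift to $\Omega^{Spin}_*$, so its square cannot be a nontrivial product in the subring $\Omega^{Spin}_*/(2,\eta,\alpha,\beta)\hookrightarrow\mathfrak{N}_*$.

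In short: you have the right intermediate objects in view, but the load-bearing steps (Anderson's squaring theorem and the $2i\equiv 2\bmod 4$ degree argument) are missing, and the $(Q_0,Q_1)$-computation you propose in their place is both harder than the actual proof and left incomplete.
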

\begin{proof}
Dold \cite{MR0079269} proves that there exists a minimal set of generators for the cobordism ring $\Omega^O_*$ whose odd-degree elements are $D_{i}=P(2^{r}-1,s2^{r})$, where $i+1=2^{r}(2s+1)$ and $i$ is odd. In \cite[Theorem 1]{MR0180977}, Milnor proved that the map $\Omega^U_{*} \longrightarrow  \Omega^O_*$ maps onto all squares of elements in $\Omega^O_*$. This map factors through $\Omega^{Spin^c}_{*}$, so $D_{i}\times D_i$ lifts to $\Omega^{Spin^c}_{2i}$ for all non-dyadic $i$. 
\begin{comment}
Non-dyadicity of $i$ is equivalent to the condition that $s$ is positive, so from here on, we assume that $s>0$.

We will now show that $D_{i}^{2}$ lifts further, to $\Omega^{Spin}_{2i}$. In \cite[page 350]{MR0248858}, Stong proves that the image $\Omega^{Spin}_{*} \longrightarrow \Omega^O_*$ is the set of cobordism classes for which all Stiefel--Whitney numbers with a factor of $w_{1}$ or $w_{2}$ vanish. 
\end{comment}
By \cite[Satz 1]{MR0079269}, the mod $2$ cohomology of the Dold manifold $P(m,n)$ is given as a graded ring by
\begin{align*} 
 H^*(P(m,n);\mathbb{F}_2) &\cong \mathbb{F}_2[c,d]/(c^{m+1},d^{n+1}),
\end{align*}
with $c \in H^{1}(P(m,n); \mathbb{F}_{2})$ and $d \in H^{2}(P(m,n); \mathbb{F}_{2})$.
By \cite[Satz 2]{MR0079269}, the total Stiefel--Whitney class of $P(m,n)$ is 
\begin{align*}
w(P(m,n)) &= (1+c)^{m}(1+c+d)^{n+1} .
\end{align*}
Setting $m=2^{r}-1$ and $n=s2^{r}$, 
the total Stiefel--Whitney class of $D_{i}$ as in the statement of the theorem is then given by: 
\begin{align*}
    w(D_{i}) 
      &= (1+c)^{2^r-1}(1+c+d)^{s2^{r}},
\end{align*}
and in particular, $w_1 = 0$. Hence $D_i$ is orientable. 

To show that $D_i\times D_i$ lifts to the spin cobordism ring, one can carry out an algebraic calculation to show that none of the nonzero Stiefel--Whitney numbers of $D_i\times D_i$ are divisible by $w_2$. This is not a difficult calculation, but it is simpler to invoke the main result of Anderson's paper \cite{MR0169245}: the square of any orientable compact manifold is unorientedly cobordant to a spin manifold. Since $w_1$ vanishes on $D_i$, its square $D_i\times D_i$ must be in the image of the map $\Omega^{Spin}_{2i}\rightarrow \Omega^O_{2i}$.

It follows easily from the structure of $ko_*$, and the Anderson--Brown--Peterson splitting of $\text{MSpin}$ into a wedge of suspensions of $ko$, $ko \langle2 \rangle$, and $H\mathbb{F}_{2}$, that all elements of $\Omega^{Spin}_*$ in degrees $\equiv 2\mod 4$ are $2$-torsion. Hence, for odd $i$, any spin-cobordism class that lifts $D_i\times D_i\in \Omega^O_{2i}$ must be $2$-torsion. 

Let $\widetilde{D_i^2}$ be a lift of $D_i\times D_i\in \mathcal{N}_{2i}$ to $\Omega^{Spin}_{2i}$. The image of $\widetilde{D_i^2}$ under the map $\Omega^{Spin}_{2i}\rightarrow \Omega^{Spin^c}_{2i}$ is then a lift of $D_i^2$ to $\Omega^{Spin^c}_{2i}$, and it is $2$-torsion since it is the image of a $2$-torsion element. It is furthermore $\beta$-torsion in $\Omega^{Spin^c}_{*}$, since by the Anderson--Brown--Peterson splitting of $2$-local $MSpin^c$, every $2$-torsion element of $\Omega^{Spin^c}_{*}$ is also $\beta$-torsion.
\begin{comment}
Specifically, if $\lambda$ is a non-dyadic partition of an even integer $n(\lambda)$, then it gives rise to a unique wedge summand $\sum^{4n(\lambda)} ko$ of $2$-local $MSpin$. If $\lambda$ is a non-dyadic partition of an odd integer $n(\lambda)$, then it gives rise to a unique wedge summand $\sum^{4n(\lambda)-4} ko\langle 2\rangle$ in $MSpin$. Therefore, the structure of $ko_{*}$ ensures that the spin cobordism ring in degrees not divisible by $4$ consists entirely of $2$-torsion. Hence the lift of the cobordism class of $D_{i}\times D_i$ to $\Omega^{Spin}_{2i}$, with $i$ odd, must be $2$-torsion.
\end{comment}

To see that $D_{i}^2\in \Omega^O_{2i}$ lifts to an {\em indecomposable} element of $\Omega^{Spin}_*$, it is enough to observe that $\Omega^{Spin}_*/(2,\eta,\alpha,\beta)$ embeds into $\Omega^O_*$, and since $D_i$ has second Stiefel--Whitney class $d\neq 0$, $D_i$ does not lift to $\Omega^{Spin}_*$. Hence the unique lift of $D_i^2$ to $\Omega^{Spin}_*/(2,\eta,\alpha,\beta)$ is indecomposable, hence any lift of $D_i^2$ to $\Omega^{Spin}_*$ is indecomposable. A completely analogous argument establishes that any lift of $D_i^2$ to $\Omega^{Spin^c}_*$ is also indecomposable.
\begin{comment}
This is a simple matter of showing that $D_i$ lifts to neither $\Omega^{Spin}_*$ nor $\Omega^{Spin^c}_*$. As above, the total Stiefel--Whitney class of $D_{i}$ is: 
 \begin{align*}
     w(D_{i})&=(1+c)^{2^{r}-1}(1+c+d)^{s2^{r}} \\
             &=(1+c+c^{2}+\dots+c^{2^{r}-1})(1+c+d)^{s2^{r}} \\
             &\equiv 1+d+cd \mod \mbox{\ terms\ of\ degree\ } \geq 4.
 \end{align*}
 Therefore, $w_2(Y_i) = d$ and $w_3(Y_i)=cd$. If $2^r+1 < s2^r$, the Stiefel--Whitney number $w_3^{s2^r-2^r+1}w_2^{2^r-1}$ is nonzero and $D_i$ does not lift to $MSpin_{*}$ or $MSpin^{c}_{*}$. We are left with the case $r=1$ and $s=1$, i.e. $D_5$:
 \begin{align*}
     w(D_5)= 1+d+cd+d^{2}
 \end{align*}
 Since $w_2w_3(Y_5)=cd^{2}$ is nonzero, $d_5$ does not lift to $MSpin_{*}$ or $MSpin^{c}_{*}$ either, completing the proof. 
\end{comment}
 \end{proof}

Since Dold \cite{MR0079269} showed that $D_i\in\Omega^O_i$ can be written as Thom's generator $Y_i$ plus decomposables in the same degree, Proposition \ref{hassans conj} tells us that {\em some of} the patterns exhibited in table \eqref{bahri-gilkey table 2} are not limited to degrees $\leq 33$, and indeed extend to {\em all} degrees. Namely, for odd non-dyadic $i$, if we write $Z_{2i}$ for a lift of $D_i\times D_i \in\Omega^O_{2i}$ to an indecomposable $2$-torsion element of $\Omega^{Spin^c}_{2i}$ (guaranteed to exist by Proposition \ref{hassans conj}), and for even $i$ we write $Z_{2i}$ for a lift of $Y_i^2\in \Omega^O_{2i}$ to an element of $\Omega^{Spin^c}_{2i}$, then we have:
\begin{theorem}\label{large nonunital subring thm}
Consider the spin${}^c$-cobordism ring as a graded algebra over the graded ring $S:= \mathbb{Z}_{(2)}[\beta, Z_{2j} : j\geq 2,\ \mbox{j\ non-dyadic}]/(\beta Z_{2j},\ 2Z_{2j}\mbox{\ for\ odd\ } j).$
Then the ideal $\left(Z_{2j} : j\ \mbox{odd}\right)$ of $S$ embeds, as a non-unital graded $S$-algebra, into the $2$-torsion ideal $\pi_*(Z)$ of the spin${}^c$-cobordism ring.
\end{theorem}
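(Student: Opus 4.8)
\emph{Proof strategy.} The plan is to realize the asserted embedding as the restriction to $J$ of the map giving the $S$-algebra structure, and then to verify injectivity after forgetting down to $\mathfrak{N}_*$. First I would pin down the structure map: define the ring homomorphism $\phi\colon S\to\Omega^{Spin^c}_*$ by $\beta\mapsto[\mathbb{C}P^1]$ and, for each non-dyadic $j\geq2$, $Z_{2j}\mapsto\widetilde{Z}_{2j}$, where $\widetilde{Z}_{2j}\in\Omega^{Spin^c}_{2j}$ is the chosen lift of $Y_j^2$ when $j$ is even and of $[D_j\times D_j]$ when $j$ is odd. This $\phi$ is well defined because the only defining relations of $S$ to check are $2\widetilde{Z}_{2j}=0=\beta\widetilde{Z}_{2j}$ for odd $j$, and both hold by Proposition~\ref{hassans conj}, which says precisely that $\widetilde{Z}_{2j}$ is $(2,\beta)$-torsion. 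Thus $\Omega^{Spin^c}_*$ becomes a graded $S$-algebra; since each $\widetilde{Z}_{2j}$ with $j$ odd lies in the $2$-torsion ideal $\pi_*(Z)$ and $\pi_*(Z)$ is an ideal, $\phi(J)\subseteq\pi_*(Z)$, and $\phi|_J\colon J\to\pi_*(Z)$ is automatically a homomorphism of non-unital graded $S$-algebras (it is additive, multiplicative, $S$-linear, and degree-preserving).

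It then remains to show $\phi|_J$ is injective, and I would reduce this to a statement in $\mathfrak{N}_*$ by post-composing with the forgetful map $\Omega^{Spin^c}_*\to\mathfrak{N}_*$: it suffices to prove that the resulting map $J\to\mathfrak{N}_*$ is injective. Every element of $J$ is an $\mathbb{F}_2$-linear combination of monomials divisible by some $Z_{2j}$ with $j$ odd, and each such monomial is $2$-torsion, so $J$ is spanned over $\mathbb{F}_2$ by the monomials $\prod_j Z_{2j}^{e_j}$ (product over non-dyadic $j\geq2$) with $\sum_{j\ \mathrm{odd}}e_j\geq1$. Such a monomial maps into $\mathfrak{N}_*$ to $\prod_{j\ \mathrm{even}}Y_j^{2e_j}\cdot\prod_{j\ \mathrm{odd}}[D_j]^{2e_j}=\bigl(\prod_{j\ \mathrm{even}}Y_j^{e_j}\cdot\prod_{j\ \mathrm{odd}}[D_j]^{e_j}\bigr)^2$, using that $\widetilde{Z}_{2j}$ reduces to $Y_j^2$ for $j$ even and to $[D_j\times D_j]=[D_j]^2$ for $j$ odd. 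So the problem becomes: these squares are $\mathbb{F}_2$-linearly independent in $\mathfrak{N}_*$.

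To finish, observe that $\mathfrak{N}_*$ is a reduced $\mathbb{F}_2$-algebra, so $x\mapsto x^2$ is injective and satisfies $(x+y)^2=x^2+y^2$; hence an $\mathbb{F}_2$-linear dependence among the squared monomials above forces the identical dependence among the elements $\prod_{j\ \mathrm{even}}Y_j^{e_j}\cdot\prod_{j\ \mathrm{odd}}[D_j]^{e_j}$. By Dold~\cite{MR0079269}, already invoked in the proof of Proposition~\ref{hassans conj}, we have $[D_j]=Y_j+(\text{decomposables})$ for odd non-dyadic $j$; therefore $\{Y_j:j\ \text{even non-dyadic}\}\cup\{[D_j]:j\ \text{odd non-dyadic}\}$ generates $\mathfrak{N}_*\cong\mathbb{F}_2[Y_n:n\ \text{non-dyadic}]$, and, having exactly one element in each non-dyadic degree, it is a set of polynomial generators, so distinct monomials in it are linearly independent. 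This gives injectivity of $J\to\mathfrak{N}_*$, hence of $\phi|_J$, proving Theorem~\ref{large nonunital subring thm}. I do not expect a genuine obstacle here: the result is essentially a repackaging of Proposition~\ref{hassans conj}, and the only points needing care are the well-definedness of $\phi$ (controlled by the $(2,\beta)$-torsion of the $\widetilde{Z}_{2j}$) and the observation that trading the odd-degree Thom generators of $\mathfrak{N}_*$ for the Dold manifolds $D_j$ preserves algebraic independence.
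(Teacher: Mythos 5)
Your proposal is correct, and it follows essentially the same route the paper intends: the paper states Theorem~\ref{large nonunital subring thm} without a displayed proof, presenting it as a direct consequence of Proposition~\ref{hassans conj} together with Dold's observation that $[D_i]=Y_i+(\text{decomposables})$; your argument makes that implicit chain explicit. The two points you single out as needing care are exactly the right ones: well-definedness of the structure map $\phi$ comes from the $(2,\beta)$-torsion assertion of Proposition~\ref{hassans conj}, and injectivity of $\phi|_J$ reduces (by postcomposing with $\Omega^{Spin^c}_*\to\mathfrak{N}_*$) to the $\mathbb{F}_2$-linear independence of the relevant squares in $\mathfrak{N}_*$, which you correctly derive from the injectivity of the Frobenius on a polynomial $\mathbb{F}_2$-algebra and the fact that replacing the odd-degree Thom generators $Y_j$ by $[D_j]$ still yields a set of polynomial generators.
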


\subsection{Mod $2$ spin${}^c$-cobordism, up to uniform $F$-isomorphism.}
\label{fisomorphism}

There is a subring of the mod $2$ spin${}^c$-cobordism ring $\Omega^{Spin^c}_*\otimes_{\mathbb{Z}}\mathbb{F}_2$ which is generated by 
\begin{itemize}
\item the large nonunital subring of the $2$-torsion in $\Omega^{Spin^c}_*$ constructed in Theorem \ref{large nonunital subring thm},
\item and Stong's generators $y_4, y_8, y_{12},y_{16},\dots$ of the mod-torsion mod-$2$ spin${}^c$-cobordism ring $(\Omega^{Spin^c}_*/tors) \otimes_{\mathbb{Z}}\mathbb{F}_2$. 
\end{itemize}
This subring of $\Omega^{Spin^c}_*\otimes_{\mathbb{Z}}\mathbb{F}_2$
 is strictly smaller than $\Omega^{Spin^c}_*\otimes_{\mathbb{Z}}\mathbb{F}_2$ itself. However, we will now show that this subring is {\em uniformly $F$-isomorphic} to $\Omega^{Spin^c}_*\otimes_{\mathbb{Z}}\mathbb{F}_2$. See Definition \ref{def of f-iso} for the definition of a uniform $F$-isomorphism. %The ring $MSpin^c_*$ is understood rationally and after localization at odd primes, as in \cite[Chapter XI]{MR0248858}, so the calculation of the ring $(MSpin^c_*)_{(2)}$ has remained the unsolved part of the calculation of the spin${}^c$-cobordism ring.
%However, the ring $MSpin^c_*$ is not an $\mathbb{F}_p$-algebra for any prime $p$. Localized at $2$, $MSpin^c_*$ becomes a $\mathbb{Z}_{(2)}$-algebra, but still not an $\mathbb{F}_2$-algebra. Nevertheless, when $f: A \rightarrow B$ is only a morphism of $\mathbb{Z}_{(p)}$-algebras, we will say that $f$ is an $F$-isomorphism (respectively, uniform $F$-isomorphism) whenever $f$ satisfies precisely the same two conditions on $a,b$ in Definition \ref{def of f-iso} (respectively, the same two conditions on $a,b$ as well as the same uniformity condition on $n$).

\begin{theorem}\label{f-iso thm}
The mod $2$ spin${}^c$-cobordism ring is uniformly $F$-isomorphic to the graded $\mathbb{F}_{2}$-algebra
\begin{equation}\label{b pres}
 \mathbb{F}_{2}\left[\beta,y_{4i}, Z_{4j-2} : i\geq 1,\ j\geq 1,\ 2j\mbox{\ not\ a\ power\ of\ }2\right]/(\beta Z_{4j-2}),
\end{equation}
with $\beta$ the Bott element in degree $2$, with $y_{4i}$ in degree $4i$, and with $Z_{4j-2}$ in degree $4j-2$.
\end{theorem}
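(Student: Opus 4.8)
The plan is to construct an explicit ring homomorphism $\phi\colon R\to \Omega^{Spin^c}_*\otimes_{\mathbb{Z}}\mathbb{F}_2$ out of the algebra \eqref{b pres} and to check that it is a uniform $F$-isomorphism by exhibiting both its source and its target as pullbacks along their ideals of $\beta$-torsion. Write $A:=\Omega^{Spin^c}_*\otimes_{\mathbb{Z}}\mathbb{F}_2$ and let $\overline T\subseteq A$ be the image of the torsion subgroup of $\Omega^{Spin^c}_*$. By the Anderson--Brown--Peterson splitting \cite{MR0190939}, $\overline T=\pi_*(Z)$ is exactly the $\beta$-power-torsion in $A$ and is annihilated by $\beta$; from this I would deduce $\overline T\cap\beta A=0$, so that $A$ is the pullback $P\times_C Q$ with $P:=A/\overline T$, $Q:=A/(\beta)=\Omega^{Spin^c}_*/(2,\beta)$ and $C:=P/(\beta)=Q/\overline T$. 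By Stong's \cite{MR192516} calculation, $P$ is a polynomial algebra $\mathbb{F}_2[\beta,x_{4i}:i\geq 1]$ with $x_{4i}$ in degree $4i$, hence $C\cong\mathbb{F}_2[x_{4i}:i\geq 1]$.

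Next I would record the analogous decomposition of $R$: the ideal $(Z_{4j-2})$ of $R$ meets $(\beta)$ trivially, so $R\cong\mathbb{F}_2[\beta,y_{4i}]\times_{\mathbb{F}_2[y_{4i}]}\mathbb{F}_2[y_{4i},Z_{4j-2}]$, and I would note that $\mathbb{F}_2[y_{4i},Z_{4j-2}]$ is a polynomial algebra with exactly the generator-degrees of $N:=\mathbb{F}_2[Y_n^2:n\geq 2\ \text{non-dyadic}]\subseteq\mathfrak{N}_*$. Then I would feed in the geometric inputs. Milnor's theorem (as used in the proof of \Cref{hassans conj}) makes every square in $\mathfrak{N}_*$ lift through $\Omega^U_*\to\Omega^{Spin^c}_*$, so $N\subseteq Q\subseteq\mathfrak{N}_*$; since $\mathfrak{N}_*=\mathbb{F}_2[Y_n]$ is a uniform $F$-isomorphic extension of $N$ of exponent $1$ and $N\hookrightarrow Q$ has trivial kernel, $N\hookrightarrow Q$ is a uniform $F$-isomorphism of exponent $1$. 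I would define $\phi$ by $\beta\mapsto\beta$, $y_{4i}\mapsto$ a lift to $A$ of $Y_{2i}^2$ (as in the discussion preceding \Cref{large nonunital subring thm}), and $Z_{4j-2}\mapsto Z_{4j-2}$ (the indecomposable $2$-torsion class of \Cref{hassans conj} lifting $D_{2j-1}\times D_{2j-1}$); that $\phi$ is well defined, and in fact injective on the ideal $(Z_{4j-2})$, is the content of \Cref{large nonunital subring thm}. Passing to quotients, $\phi$ becomes a morphism of pullback squares with components $f_P\colon\mathbb{F}_2[\beta,y_{4i}]\to P$, $f_Q\colon\mathbb{F}_2[y_{4i},Z_{4j-2}]\to Q$, and $w\colon\mathbb{F}_2[y_{4i}]\to C$.

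I would then analyze these three maps. Using \Cref{spin_manifold_reps} and Dold's theorem, $D_{2j-1}^2=Y_{2j-1}^2+(\text{a polynomial in the }Y_m^2\text{ with }m<2j-1)$, which is a degree-triangular change of generators, so $\{Y_{2i}^2\}\cup\{D_{2j-1}^2\}$ is again a regular system of polynomial generators of $N$ and $f_Q$ is an isomorphism onto $N$, hence a uniform $F$-isomorphism of exponent $1$. For $f_P$ and $w$ the crucial point is the claim $(\star)$ that no nonzero polynomial in the $Y_{2i}^2$ is $2$-torsion, i.e.\ $\mathbb{F}_2[Y_{2i}^2]\cap\overline T=0$; granting $(\star)$, the composite $\mathbb{F}_2[Y_{2i}^2]\hookrightarrow Q\twoheadrightarrow C$ is an injection of polynomial algebras with equal Poincar\'e series, hence an isomorphism, which forces both $w$ and $f_P$ to be isomorphisms. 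Finally I would prove and apply the elementary lemma that a morphism of pullback squares of graded connected $\mathbb{F}_2$-algebras whose three component maps are uniform $F$-isomorphisms is itself one: for the kernel because powers in a pullback are computed coordinatewise, and for the cokernel because after raising $(p,q)$ to a fixed power its two coordinates acquire the same image in $C$, hence (absorbing $\ker w$ by one more uniform power if needed) the same image in $C'$, so they reassemble inside $R$ to a preimage of a uniform power of $(p,q)$. With $f_P$ and $w$ isomorphisms and $f_Q$ of exponent $1$, this yields that $\phi$ is a uniform $F$-isomorphism of exponent $1$.

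The hard part will be $(\star)$, equivalently the assertion that Stong's polynomial generators of $\Omega^{Spin^c}_*/(2,\text{torsion})$ may be taken to be the classes of the $Y_{2i}^2$ (or of the $\mathbb{C}P^{2i}$). I expect to establish it by combining Stong's \cite{MR192516} result with the behavior of $\Omega^{Spin^c}_*\to\Omega^{SO}_*$ and with the description of $\Omega^{Spin^c}_*\to\mathfrak{N}_*$ obtained in the proof of \Cref{mspinc_subring in main text}: the $2$-torsion of $\Omega^{Spin^c}_*$ reduces in $\mathfrak{N}_*$ to $\mathbb{F}_2$-combinations of monomials $Y_\lambda$ whose partition $\lambda$ contains an odd part, and such combinations are linearly disjoint from the span of monomials in the $Y_{2i}^2$. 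Proving this disjointness cleanly in all degrees, rather than only in the range computed earlier, is the delicate point; the remaining steps are formal pullback and Poincar\'e-series arguments.
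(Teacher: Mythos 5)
Your strategy is genuinely different from the paper's. The paper works from the inside out: it identifies an explicit subring $B\subseteq\Omega^{Spin^c}_*\otimes\mathbb{F}_2$ generated by $\beta$, Stong's $y_{4i}$, and the $D_i^2$-classes, argues $B$ contains all Frobenius squares (so $B\hookrightarrow A$ is a uniform $F$-isomorphism of exponent $1$), and then identifies $B$ with \eqref{b pres} by filtering by powers of $B\cap\tilde T$ and comparing $B$ with $E_0B$ via a freeness argument. You instead build a map $\phi\colon R\to A$ directly, exhibit both $R$ and $A$ as pullbacks along their ideals of $\beta$-power torsion, and reduce to a three-corner verification plus a general ``$F$-isomorphisms glue along pullback squares'' lemma. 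That lemma, together with the observation that $N\hookrightarrow Q\hookrightarrow\mathfrak{N}_*$ is a uniform $F$-isomorphism of exponent $1$ because $\mathfrak{N}_*$ is perfect over $N$, is a nice formal alternative to the paper's filtration step and makes the uniformity transparent.

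However, there are two places that need to be shored up. First, the pullback presentation $A\cong P\times_C Q$ rests on $\overline T\cap\beta A=0$ (and on $\overline T$ being exactly the $\beta$-power torsion of $A$), which you announce as something you ``would deduce'' from Anderson--Brown--Peterson. It does not follow from the additive splitting alone; one needs that the splitting $MSpin^c_{(2)}\simeq Z\vee\coprod\Sigma^{4|J|}ku_{(2)}$ is a splitting of $ku$-modules, so that multiplication by $\beta$ is diagonal, carries the torsion-free summand to itself, and is divisible by $2$ only when the input is. If you invoke this, flag it explicitly. Second, and more seriously, the whole argument hinges on the claim you call $(\star)$: that $\mathbb{F}_2[Y_{2i}^2]$ meets the image of the $2$-torsion in $\mathfrak{N}_*$ only in $0$, equivalently that Stong's polynomial generators of $(\Omega^{Spin^c}_*/T)\otimes\mathbb{F}_2$ can be represented by the $Y_{2i}^2$. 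Without it you cannot conclude that $w$ and $f_P$ are isomorphisms (a map $\mathbb{F}_2[y_{4i}]\to C$ of polynomial algebras with matching Poincar\'e series need not be injective, let alone an $F$-isomorphism, absent further input), and the ``absorb $\ker w$'' step of your gluing lemma would then not terminate uniformly unless you separately bound the nilpotence degree. You are candid that $(\star)$ is open in your writeup and only checked computationally through degree $33$; that is a real gap, not a routine detail, since it is exactly the assertion that the torsion-free generators and the $2$-torsion generators can be chosen to sit in the linearly disjoint spans of all-even-part versus some-odd-part partitions in $\mathfrak{N}_*$. The paper's route via the subring $B$ and its associated graded is designed precisely to avoid having to produce a clean set of Stong generators; if you want to keep the pullback architecture, you will need to supply a proof of $(\star)$, for instance via the $(Q_0,Q_1)$-Margolis homology of $H^*(MSpin^c;\mathbb{F}_2)$ or via the factorization of $\Omega^U_*\to\Omega^{Spin^c}_*\to\Omega^{Spin^c}_*/T$, neither of which is immediate from what appears earlier in the paper.
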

\begin{proof}
Write $tors$ for the ideal of $\Omega^{Spin^c}_*$ consisting of the $2$-torsion elements.
Stong \cite[Proposition 14]{MR192516} proved that $\left(\Omega^{Spin^c}_*/tors\right)\otimes_{\mathbb{Z}}\mathbb{F}_2$ is a polynomial $\mathbb{F}_2$-algebra on generators $y_2$ and $y_4,y_8,y_{12}, \dots$. Since $\Omega^{Spin^c}_2\cong \mathbb{Z}$ generated by $\beta = [\mathbb{C}P^1]$, Stong's generator $y_2$ agrees modulo $2$ with the Bott element $\beta$. Hence $\left(\Omega^{Spin^c}_*/tors\right)\otimes_{\mathbb{Z}}\mathbb{F}_2$ is isomorphic as a graded $\mathbb{F}_2$-algebra to $\mathbb{F}_{2}\left[\beta,y_{4i} : i\geq 1\right]$.

Now let $B$ denote the graded subring of $\Omega^{Spin^c}_*\otimes_{\mathbb{Z}}\mathbb{F}_2$ generated by $\beta$, by $y_{4i}$ for all $i>1$, and by the mod $2$ reductions of the $(2,\beta)$-torsion elements in $\Omega^{Spin^c}_*$ from Theorem \ref{hassans conj} which lift %$Z_{4j-2}:= Y_{(2j-1,2j-1)} = Y_{2j-1}^2$ 
$D_{i}\times D_{i}$ for all odd non-dyadic $i$. Since $B$ contains all the squares of elements in $\Omega^{Spin^c}_*\otimes_{\mathbb{Z}}\mathbb{F}_2$, the graded $\mathbb{F}_2$-algebra map
\begin{align*} B &\hookrightarrow \Omega^{Spin^c}_*\otimes_{\mathbb{Z}}\mathbb{F}_2\end{align*}
is a uniform $F$-isomorphism.

Let $\tilde{T}$ denote the kernel of the ring map $\Omega^{Spin^c}_*\otimes_{\mathbb{Z}}\mathbb{F}_2 \rightarrow \left( \Omega^{Spin^c}_*/tors\right)\otimes_{\mathbb{Z}}\mathbb{F}_2$. 
Filter $B$ by powers of the ideal $B\cap \tilde{T}$, i.e., equip $B$ with the $(B\cap \tilde{T})$-adic filtration. By the Anderson--Brown--Peterson splitting and by Theorem \ref{hassans conj}, the associated graded ring $E_0B$ is isomorphic to $\mathbb{F}_2[\beta]$ tensored with the image of $B$ in $\Omega^O_*$ and reduced modulo the relations $\beta \cdot x = 0$ for all $x\in B\cap \tilde{T}$, i.e., $E_0B$ is isomorphic to \eqref{b pres}. 

We claim that $B$ itself is isomorphic to \eqref{b pres}. The $(B\cap \tilde{T})$-adic filtration on $B$ is additively split, so $B \cong E_0B$ as graded $\mathbb{F}_2$-vector spaces. In principle, the ring structure on $E_0B$ could differ from the ring structure on $B$ if the multiplication on $B$ were to exhibit $(B\cap \tilde{T})$-adic filtration jumps, i.e., when we multiply two elements $x,y$ of $B$, with $x$ of $B\cap\tilde{T}$-adic filtration $i$ and with $y$ of $(B\cap \tilde{T})$-adic filtration $j$, we could perhaps get an element of $(B\cap \tilde{T})$-adic filtration $>i+j$. 

However, even if filtration jumps occur, $E_0B$ is still isomorphic to the graded $\mathbb{F}_2$-algebra with presentation \eqref{b pres}. This is by a freeness argument similar to the classical argument that, if the associated graded of a filtered commutative $k$-algebra is a polynomial (i.e., free commutative) $k$-algebra, then the original filtered commutative $k$-algebra must also have been free commutative. The argument is as follows. Let $\mathcal{C}$ be the category of pairs $(A,S)$, where $A$ is a graded-commutative $\mathbb{F}_2[\beta]$-algebra, and $S$ is a set of homogeneous elements of $A$ such that $\beta\cdot x = 0$ for all $x\in S$. There is a forgetful functor from $\mathcal{C}$ to the category $\Subsets$ of pairs $(S_0,S_1)$ in which $S_0,S_1$ are sets and $S_1\subseteq S_0$. The forgetful functor $\mathcal{C}\rightarrow \Subsets$ sends $(A,S)$ to the underlying sets of $A$ and of $S$. The graded $\mathbb{F}_2[\beta]$-algebra \eqref{b pres} is the free object of $\mathcal{C}$ on the pair 
\[ \left( \{ y_4,y_8,y_{12},y_{16},\dots\} \cup \{ Z_2, Z_6, Z_{10}, Z_{18}, Z_{22}, \dots\} ,\ \{ Z_2, Z_6, Z_{10}, Z_{18}, Z_{22}, \dots\} \right).\]
By Proposition \ref{hassans conj}, the elements $\{ Z_2, Z_6, Z_{10}, Z_{18}, Z_{22}, \dots\}$ are $\beta$-torsion in $B$, not merely in $E_0B$. Hence there are no relations on $E_0B$ except those which make it an object of the category of $\mathcal{C}$, and $B$ lives in $\mathcal{C}$ as well, i.e., $E_0B$ and $B$ are isomorphic in the category $\mathcal{C}$. Hence $E_0B$ and $B$ are isomorphic as graded $\mathbb{F}_2$-algebras, and hence $\Omega^{Spin^c}_*\otimes_{\mathbb{Z}}\mathbb{F}_2$ is uniformly $F$-isomorphic to the $\mathbb{F}_2$-algebra \eqref{b pres}, as claimed.
\end{proof}

In \cref{results}, we showed that $\Omega^{Spin^c}_*\otimes_{\mathbb{Z}}\mathbb{F}_2$ is not isomorphic to a polynomial algebra. Nevertheless, since an $F$-isomorphism induces a homeomorphism on the prime spectra (see \cite[Proposition B.8]{MR0298694} or \cite[Lemma 29.46.9]{stacks-project}), we have:
\begin{corollary}\label{f-iso cor}
The topological space $\Spec \left( \Omega^{Spin^c}_*/(2,\beta)\right)$ is homeomorphic to $\Spec$ of a polynomial $\mathbb{F}_2$-algebra on countably infinity many generators.

Furthermore, the topological space $\Spec \left(\Omega^{Spin^c}_*\otimes_{\mathbb{Z}}\mathbb{F}_2\right)$ is homeomorphic to $\Spec$ of the ring \eqref{b pres}.
\end{corollary}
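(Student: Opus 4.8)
The plan is to deduce this purely formally from Theorem~\ref{f-iso thm} and Corollary~\ref{F-iso to a poly alg maincor}, using the standard fact that an $F$-isomorphism of $\mathbb{F}_p$-algebras induces a homeomorphism on prime ideal spectra (\cite[Proposition~B.8]{MR0298694}, or \cite[Lemma~29.46.9]{stacks-project}). It is worth recalling the mechanism of that fact in one line: if $f\colon A\to B$ is an $F$-isomorphism, then $\ker f$ is a nil ideal, so $\Spec(\im f)\to\Spec A$ is a homeomorphism; and because some iterate of the Frobenius carries every element of $B$ into $\im f$, a prime of $B$ is determined by its contraction to $\im f$, whence $\mathfrak p\mapsto\sqrt{\mathfrak p B}$ is a continuous inverse to the contraction map $\Spec B\to\Spec(\im f)$.

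The second assertion is then immediate: Theorem~\ref{f-iso thm} exhibits a uniform $F$-isomorphism between $\Omega^{Spin^c}_*\otimes_{\mathbb{Z}}\mathbb{F}_2$ and the graded $\mathbb{F}_2$-algebra \eqref{b pres}, so applying the fact above produces the claimed homeomorphism of spectra, with no further work.

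For the first assertion I would check that the $F$-isomorphism descends modulo $\beta$. Reducing the presentation \eqref{b pres} by $\beta$ kills the relations $\beta Z_{4j-2}$ and leaves the honest polynomial $\mathbb{F}_2$-algebra on the (countably infinite) set of generators $y_{4i}$ and $Z_{4j-2}$; in particular it is $\Spec$ of a polynomial $\mathbb{F}_2$-algebra on countably many generators. The inclusion $B\hookrightarrow\Omega^{Spin^c}_*\otimes_{\mathbb{Z}}\mathbb{F}_2$ of Theorem~\ref{f-iso thm} carries $\beta$ to $\beta$, so it induces a map $B/(\beta)\to\Omega^{Spin^c}_*/(2,\beta)$, and this induced map is still a uniform $F$-isomorphism: surjectivity of an iterate of Frobenius descends because any lift of a class downstairs already has a $2^n$-th power lying in $B$, and the kernel remains nil because if $b\in B$ maps into the ideal $(\beta)$ downstairs, writing $b=\beta c$ and using $c^{2^n}\in B$ gives $b^{2^n}=\beta\cdot(\beta^{2^n-1}c^{2^n})\in\beta B$, i.e.\ $b^{2^n}=0$ in $B/(\beta)$. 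This is exactly the content of Corollary~\ref{F-iso to a poly alg maincor}, which may simply be cited; combining it with the spectra fact finishes the proof.

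There is no serious obstacle here: all the substance was already spent in proving Theorem~\ref{f-iso thm}. The only point demanding any care is the verification that reduction modulo $\beta$ preserves both defining properties of an $F$-isomorphism, which is the short computation indicated above.
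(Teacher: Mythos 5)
Your proposal is correct and takes essentially the same route as the paper: derive the $\Spec$ statements from Theorem~\ref{f-iso thm} via the fact that an $F$-isomorphism of $\mathbb{F}_p$-algebras induces a homeomorphism on prime spectra, and, for the first assertion, pass to the quotient modulo $\beta$ using Corollary~\ref{F-iso to a poly alg maincor}. The paper treats the corollary as immediate and does not spell out the mod-$\beta$ verification you give, but the short Frobenius computation you supply is a correct justification of exactly the step the paper elides.
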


%\bibliography{/home/asalch/texmf/tex/salch}{}
%\bibliographystyle{/home/asalch/texmf/tex/hplain}
%\bibliography{main}{}

\begin{thebibliography}{10}

\bibitem{MR1324104}
J.~F. Adams.
\newblock {\em Stable homotopy and generalised homology}.
\newblock Chicago Lectures in Mathematics. University of Chicago Press,
  Chicago, IL, 1995.
\newblock Reprint of the 1974 original.

\bibitem{MR0190939}
D.~W. Anderson, E.~H. Brown, Jr., and F.~P. Peterson.
\newblock Spin cobordism.
\newblock {\em Bull. Amer. Math. Soc.}, 72:256--260, 1966.

\bibitem{MR0219077}
D.~W. Anderson, E.~H. Brown, Jr., and F.~P. Peterson.
\newblock The structure of the {S}pin cobordism ring.
\newblock {\em Ann. of Math. (2)}, 86:271--298, 1967.

\bibitem{MR0169245}
Peter~G. Anderson.
\newblock Cobordism classes of squares of orientable manifolds.
\newblock {\em Bull. Amer. Math. Soc.}, 70:818--819, 1964.

\bibitem{MR0883375}
Anthony Bahri and Peter Gilkey.
\newblock The eta invariant, {${\rm Pin}^c$} bordism, and equivariant {${\rm
  Spin}^c$} bordism for cyclic {$2$}-groups.
\newblock {\em Pacific J. Math.}, 128(1):1--24, 1987.

\bibitem{baker2014msp}
Andrew Baker and Jack Morava.
\newblock ${MS}p$ localized away from $2$ and odd formal group laws.
\newblock arXiv preprint 1403.2596, 2014.

\bibitem{Blumenhagen_Cribiori_Kneißl_Makridou_2023}
Ralph Blumenhagen, Niccolò Cribiori, Christian Kneißl, and Andriana Makridou.
\newblock Dimensional reduction of cobordism and {K}-theory.
\newblock {\em Journal of High Energy Physics}, 2023(3), 2023.

\bibitem{MR1484478}
Wieb Bosma, John Cannon, and Catherine Playoust.
\newblock The {M}agma algebra system. {I}. {T}he user language.
\newblock {\em J. Symbolic Comput.}, 24(3-4):235--265, 1997.
\newblock Computational algebra and number theory (London, 1993).

\bibitem{MR0291159}
V.~M. Buchstaber and S.~P. Novikov.
\newblock Formal groups, power systems and {A}dams operators.
\newblock {\em Mat. Sb. (N.S.)}, 84(126):81--118, 1971.

\bibitem{MR0079269}
Albrecht Dold.
\newblock Erzeugende der {T}homschen {A}lgebra {${\mathfrak{N}}$}.
\newblock {\em Math. Z.}, 65:25--35, 1956.

\bibitem{harada_kono}
Masana Harada and Akira Kono.
\newblock Cohomology mod 2 of the classifying space of {S}pin${}^c$(n).
\newblock {\em Publ. Res. Inst. Math. Sci.}, 22(3):543–549, sep 1986.

\bibitem{MR2987372}
Michiel Hazewinkel.
\newblock {\em Formal groups and applications}.
\newblock AMS Chelsea Publishing, Providence, RI, 2012.
\newblock Corrected reprint of the 1978 original.

\bibitem{coctalos}
Mike Hopkins.
\newblock Complex oriented cohomology theories and the language of stacks.
\newblock {\em unpublished course notes, available on the Web}, 1999.

\bibitem{MR3443860}
I.~G. Macdonald.
\newblock {\em Symmetric functions and {H}all polynomials}.
\newblock Oxford Classic Texts in the Physical Sciences. The Clarendon Press,
  Oxford University Press, New York, second edition, 2015.
\newblock With contribution by A. V. Zelevinsky and a foreword by Richard
  Stanley, Reprint of the 2008 paperback edition.

\bibitem{MR0180977}
J.~Milnor.
\newblock On the {S}tiefel-{W}hitney numbers of complex manifolds and of spin
  manifolds.
\newblock {\em Topology}, 3:223--230, 1965.

\bibitem{MR0440554}
John~W. Milnor and James~D. Stasheff.
\newblock {\em Characteristic classes}, volume No. 76 of {\em Annals of
  Mathematics Studies}.
\newblock Princeton University Press, Princeton, NJ; University of Tokyo Press,
  Tokyo, 1974.

\bibitem{MR0253350}
Daniel Quillen.
\newblock On the formal group laws of unoriented and complex cobordism theory.
\newblock {\em Bull. Amer. Math. Soc.}, 75:1293--1298, 1969.

\bibitem{MR0290382}
Daniel Quillen.
\newblock Elementary proofs of some results of cobordism theory using
  {S}teenrod operations.
\newblock {\em Advances in Math.}, 7:29--56, 1971.

\bibitem{MR0290401}
Daniel Quillen.
\newblock The {${\rm mod}$} {$2$} cohomology rings of extra-special
  {$2$}-groups and the spinor groups.
\newblock {\em Math. Ann.}, 194:197--212, 1971.

\bibitem{MR0298694}
Daniel Quillen.
\newblock The spectrum of an equivariant cohomology ring. {I}, {II}.
\newblock {\em Ann. of Math. (2)}, 94:549--572; ibid. (2) 94 (1971), 573--602,
  1971.

\bibitem{MR860042}
Douglas~C. Ravenel.
\newblock {\em Complex cobordism and stable homotopy groups of spheres}, volume
  121 of {\em Pure and Applied Mathematics}.
\newblock Academic Press Inc., Orlando, FL, 1986.

\bibitem{stacks-project}
The {Stacks Project Authors}.
\newblock \textit{Stacks Project}.
\newblock \url{https://stacks.math.columbia.edu}, 2024.

\bibitem{MR2868112}
Richard~P. Stanley.
\newblock {\em Enumerative combinatorics. {V}olume 1}, volume~49 of {\em
  Cambridge Studies in Advanced Mathematics}.
\newblock Cambridge University Press, Cambridge, second edition, 2012.

\bibitem{MR192516}
R.~E. Stong.
\newblock Relations among characteristic numbers. {II}.
\newblock {\em Topology}, 5:133--148, 1966.

\bibitem{MR0248858}
Robert~E. Stong.
\newblock {\em Notes on cobordism theory}.
\newblock Princeton University Press, Princeton, NJ; University of Tokyo Press,
  Tokyo, 1968.
\newblock Mathematical notes.

\bibitem{MR0061823}
Ren\'e Thom.
\newblock Quelques propri\'et\'es globales des vari\'et\'es diff\'erentiables.
\newblock {\em Comment. Math. Helv.}, 28:17--86, 1954.

\bibitem{MR0120654}
C.~T.~C. Wall.
\newblock Determination of the cobordism ring.
\newblock {\em Ann. of Math. (2)}, 72:292--311, 1960.

\bibitem{MR0035993}
Wen-ts\"un Wu.
\newblock Les {$i$}-carr\'es dans une vari\'et\'e{} grassmannienne.
\newblock {\em C. R. Acad. Sci. Paris}, 230:918--920, 1950.

\bibitem{ertem2020weyl}
Ümit Ertem.
\newblock Weyl semimetals and spin$^c$ cobordism.
\newblock arXiv preprint 2003.04082, 2020.

\end{thebibliography}
%\bibliographystyle{plain}
\end{document}